\documentclass{lms}

\usepackage{amsmath,mathtools}
\usepackage{amssymb,amsfonts}
\usepackage{microtype}
\usepackage[unicode,bookmarksnumbered,hidelinks,hypertexnames=false]{hyperref}

\newtheorem{theorem}{Theorem}[section]

\newtheorem{lemma}[theorem]{Lemma}
\newtheorem{corollary}[theorem]{Corollary}
\newnumbered{definition}[theorem]{Definition}
\newnumbered{remark}[theorem]{Remark}

\newenvironment{romanlist}
  {\begin{enumerate}[iii]\renewcommand{\theenumi}{\textup{(\roman{enumi})}}}
  {\end{enumerate}}
\newenvironment{alphlist}
  {\begin{enumerate}[b]\renewcommand{\theenumi}{\textup{(\alph{enumi})}}}
  {\end{enumerate}}

\DeclareMathOperator{\Spec}{Spec}
\DeclareMathOperator{\Proj}{Proj}
\DeclareMathOperator{\Span}{span}

\DeclareMathOperator{\Hilb}{Hilb}
\DeclareMathOperator{\edim}{edim}

\DeclareMathOperator{\im}{im}
\DeclareMathOperator{\ch}{ch}
\DeclareMathOperator{\td}{td}

\newcommand{\F}{\mathbb F}
\newcommand{\PP}{\mathbb P}
\newcommand{\Ample}{\mathcal A}
\newcommand{\OO}{\mathcal O}

\newcommand{\LL}{\mathcal L}
\newcommand{\MM}{\mathcal M}
\newcommand{\EE}{\mathcal E}
\newcommand{\FF}{\mathcal F}
\newcommand{\kk}{\kappa}
\newcommand{\mm}{\mathfrak m}
\newcommand{\barX}{\overline{X}}
\newcommand{\set}[1]{\left\{#1\right\}}

\makeatletter
\def\@footnotetwo{\footnotetext{2020 {\normalfont\itshape Mathematics Subject Classification\/} \@classno.}}
\makeatother
\journal{\vbox to 5.5pt{\noindent
  \parbox[t]{\textwidth}{\raggedleft\normalfont\footnotesize\baselineskip 9pt
  {\itshape Submitted exclusively to the Transactions of the London Mathematical Society}}
  \vss}}

\title[Anti-Bertini embeddings over finite fields]
 {Hyperplane anti-Bertini embeddings over finite fields}

\author{Yutong Zhang     Yaoran Yang}

\classno{14G15 (primary), 14N05, 14C20, 14M10 (secondary)}

\begin{document}
\maketitle

\begin{abstract}
Baker asked, as recorded by Poonen, whether a fixed smooth quasiprojective variety over a finite field must have a smooth rational hyperplane section after every sufficiently high-dimensional linearly nondegenerate embedding.  Poonen predicted a negative answer for every positive-dimensional variety.  We prove this predicted negative answer for each prescribed variety: if $X$ is nonempty, smooth, quasiprojective, and of pure positive dimension over $\F_q$, then for every sufficiently large $N$ there is a locally closed embedding $X\hookrightarrow\PP^N_{\F_q}$ whose components remain linearly nondegenerate after arbitrary scalar extension, but whose every $\F_q$-rational hyperplane section is singular.  The construction assigns one closed point of $X$ to each rational hyperplane and forces the pulled-back linear form to have zero first-order jet at that point.
\end{abstract}

\begingroup
\makeatletter
\renewcommand\@pnumwidth{2em}
\renewcommand\@tocrmarg{3em}
\renewcommand\@dotsep{4.5}
\renewcommand*\l@section[2]{%
  \@dottedtocline{1}{0pt}{1.9em}{#1}{#2}%
}
\renewcommand*\l@subsection[2]{%
  \@dottedtocline{2}{1.9em}{2.8em}{#1}{#2}%
}
\makeatother
\setcounter{tocdepth}{2}
\pdfbookmark[1]{Contents}{toc}
\tableofcontents
\endgroup

\section{Introduction}

Let $k=\F_q$.  The classical Bertini smoothness theorem over an infinite field says that, if
\[
        X\subset \PP^n_k
\]
is smooth and quasiprojective of pure dimension $m\ge1$, then there is a dense Zariski-open subset of the dual projective space whose points parametrize hyperplanes $H\subset\PP^n_k$ for which
\[
        H\cap X\quad\text{is smooth of dimension}\quad m-1.
\]
Over a finite field this dense open subset may have no $k$-rational point.  Poonen's Bertini theorem over finite fields resolves this difficulty in a different direction: for a fixed embedding, one lets the hypersurface degree tend to infinity.  Without additional Taylor conditions, the density of smooth sections is
\[
        \zeta_X(m+1)^{-1}
        =\prod_{P\in |X|}\left(1-q^{-(m+1)\deg P}\right).
\]
More generally, in the notation of Poonen's theorem, if Taylor data are imposed on a finite subscheme $Z$, if $T\subset H^0(Z,\OO_Z)$ is the set of allowed restrictions, and if
\[
        U=X-(Z\cap X),
\]
then the corresponding density is
\[
        \frac{\#T}{\#H^0(Z,\OO_Z)}\,\zeta_U(m+1)^{-1};
\]
see \cite[Theorems 1.1 and 1.2]{Poonen2004}.

Poonen also recorded the following question of Baker.  Fix a smooth quasiprojective $X$ of pure dimension $m$ over $\F_q$.  Is there an integer $n_0$ such that, for every $n\ge n_0$ and every embedding
\[
        \iota:X\longrightarrow \PP^n_k
\]
whose connected components are not mapped into hyperplanes, some $k$-rational hyperplane $H\subset\PP^n_k$ has
\[
        H\cap \iota(X)\quad\text{smooth of dimension}\quad m-1?
\]
Poonen conjectured that the answer should be negative for every positive-dimensional $X$ \cite[Question 4.1]{Poonen2004}.  Since Baker's question quantifies over all embeddings, a negative answer for a given $X$ means constructing bad embeddings of that same $X$ in arbitrarily large projective spaces.

Erman and Wood gave such a negative answer in an existential form: for every $m>0$ they constructed a smooth projective $m$-fold with a sequence of nondegenerate embeddings
\[
        \kappa_d:X\hookrightarrow \PP^{N_d}_k,
        \qquad N_d\to\infty,
\]
for which every $k$-rational hyperplane section is singular \cite[Proposition 9.3]{ErmanWood2015}.  Their examples choose a special variety $X$.  They also noted that a different construction, suggested by Poonen, should work for any smooth quasiprojective $X$, although not by using complete linear series \cite[Remark 9.4]{ErmanWood2015}.  The present paper gives that construction for the prescribed variety $X$ appearing in Baker's question.

\subsection{Main result and scope}

The theorem below gives a negative answer to Baker's question for each nonempty smooth quasiprojective variety of positive dimension over a finite field.  It is not a statement that all high-dimensional embeddings are bad.  Rather, for the given $X$, it constructs linearly nondegenerate embeddings with no smooth rational hyperplane section.  The construction in fact works in every sufficiently large ambient dimension; this is a precise output of the method, not a separate conjectural point.

\begin{theorem}[Anti-Bertini embeddings for a prescribed variety]\label{thm:main}
Let $X$ be a nonempty smooth quasiprojective $k$-scheme of pure dimension $m\ge 1$.  Then there exists an integer $N_X$ with the following property.  For every $N\ge N_X$ there is a locally closed $k$-embedding
\[
        \iota_N:X\hookrightarrow \PP^N_k
\]
such that
\begin{romanlist}
\item for every connected component $C$ of $X$, every field extension $K/k$, and every connected component $D$ of $C_K$, the image of $D$ under $(\iota_N)_K$ is not contained in any hyperplane of $\PP^N_K$;
\item for every $k$-rational hyperplane $H\subset \PP^N_k$, the scheme-theoretic intersection
\[
        H\cap \iota_N(X)
\]
is singular at a closed point of $\iota_N(X)$; in particular it is not smooth of dimension $m-1$ over $k$.
\end{romanlist}
More precisely, one may assign to every $H\in(\PP^N_k)^\vee(k)$ a distinct closed point $P_H\in X$ such that, if $s_H\in H^0(X,\iota_N^*\OO_{\PP^N}(1))$ is the pullback of a defining linear form for $H$, then
\[
        s_H\big|_{P_H^{(2)}}=0,
        \qquad
        P_H^{(2)}:=\Spec\bigl(\OO_{X,P_H}/\mm_{P_H}^2\bigr),
\]
and $s_H$ is not identically zero on the connected component of $X$ containing $P_H$; consequently $H\cap\iota_N(X)$ is singular at $\iota_N(P_H)$.
\end{theorem}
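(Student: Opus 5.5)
We construct the embedding by choosing the linear system directly, rather than starting from a complete linear series. Fix a very ample line bundle $\LL$ on $X$ (or on a projective compactification $\barX$, restricted to $X$). Replace $\LL$ by a high power $\LL^{\otimes d}$ so that three things hold: $\LL$ is very ample, suitable jet-evaluation maps are surjective, and $\LL$ has many sections. We then look for an $(N+1)$-dimensional subspace $V\subset H^0(X,\LL)$ together with a basis $v_0,\dots,v_N$. A $k$-rational hyperplane $H$ corresponds to a nonzero vector $a\in k^{N+1}$ up to scalars, and $s_H=\sum a_i v_i$. The number of rational hyperplanes is
\[
M_N=\frac{q^{N+1}-1}{q-1}.
\]
To each of them we assign a distinct closed point $P_H$. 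These points are taken of large degree, pairwise distinct, and away from a fixed finite set of auxiliary points; a smooth variety of positive dimension has closed points of every sufficiently large degree, so infinitely many are available.

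The key step is a linear-algebra reformulation. The conditions $s_H|_{P_H^{(2)}}=0$ for all $H$ are linear in the tuple $(v_0,\dots,v_N)\in H^0(X,\LL)^{N+1}$. The condition at $P_H$ involves only the single linear combination $\sum a_i v_i$. We impose these conditions on the tuple, together with extra conditions that force:
\begin{itemize}
\item[(a)] very ampleness of $V$ (an embedding), for instance by requiring that $V$ contain a prescribed embedding subspace $W$ after a change of basis;
\item[(b)] nondegeneracy (i) on each geometric component;
\item[(c)] $s_H\not\equiv0$ on the component containing $P_H$.
\end{itemize}
The map
\[
H^0(X,\LL)\to \bigoplus_H H^0\bigl(P_H^{(2)},\LL\bigr)\oplus(\text{auxiliary jets})
\]
is surjective once $d$ is large relative to the total degree of the subscheme $\bigcup P_H^{(2)}$, by Serre vanishing on $\barX$ for the ideal sheaf of a finite subscheme. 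Surjectivity makes the space of admissible tuples an affine space of known dimension. It also makes the conditions at different $P_H$ independent.

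The crucial choice is how to make the solution nondegenerate. The naive solution $v_i=0$ satisfies all jet conditions, so it must be excluded. We do this by prescribing, at auxiliary closed points $Q_1,\dots,Q_r$ disjoint from all $P_H$, values or jets of the $v_i$ that already give an embedding-type structure. Concretely, we take $v_i=w_i+u_i$, where $w_0,\dots,w_N$ are sections that embed $X$ and are linearly independent on every geometric component. The corrections $u_i$ are chosen in the subspace of sections vanishing to high order along a divisor, or at the $Q_j$. We then solve for the $u_i$ so that
\[
\Bigl(\sum a_i u_i\Bigr)\Big|_{P_H^{(2)}}=-\Bigl(\sum a_i w_i\Bigr)\Big|_{P_H^{(2)}}.
\]
Since each $a\neq0$ has some $a_j\neq0$, this can be arranged by adjusting a single $u_j$ at $P_H$. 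Because the points $P_H$ are distinct and the restriction map is surjective, all of these adjustments can be made simultaneously.

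To keep (i) and the embedding property, we take the $u_i$ to vanish to order $2$ along a fixed effective divisor, or at a finite set of test points. There $V$ agrees with $W$, which separates points and tangents on some open set and gives independence of the $v_i$ on each geometric component, by checking jets at points of each component. To obtain a global embedding we use the alternative of adding sections $w_i$ twisted by a power of a defining equation, so that injectivity and unramifiedness hold everywhere. I expect this global step to be \emph{the main obstacle}: the corrections $u_i$ could create new ramification or new identifications of points. The plan is to bound it by adding more coordinates, since $N$ is large. Condition (c) holds automatically, because $s_H$ restricted to the test points equals $\sum a_i w_i$, which is nonzero on each component by the independence of the $w_i$. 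The integer $N_X$ comes from needing $N$ at least the embedding dimension of a fixed very ample $W$ plus room for the component-wise independence conditions. Singularity of $H\cap\iota_N(X)$ at $\iota_N(P_H)$ then follows because the local equation of $H\cap\iota_N(X)$ at $P_H$ lies in $\mm_{P_H}^2$ and is nonzero there.
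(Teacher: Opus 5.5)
There is a genuine gap, and it is the step you yourself flagged as ``the main obstacle'': nothing in your plan shows that the corrected system $v_i=w_i+u_i$ is still an embedding. Making the $u_i$ vanish to order two at test points, or along a divisor, controls $V$ only there. Elsewhere the corrections can create ramification or identify points. This includes the points $P_H$ themselves, where $V$ must still separate tangent vectors even though $s_H$ has been forced into the kernel of the jet map, and it includes pairs consisting of some $P_H$ and another point of $X$.

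Over $\F_q$ you cannot simply take the corrections to be general. The counting bound the paper has available does not rescue this either. With $L=N+1$ fixed and $d$ forced large by the degrees of the $P_H$, the bound $P_0(d,N+1)q^{-(N+1-b)}$ of Lemma~\ref{lem:length-two-bad-locus} is not below $1$. Moreover, your admissible tuples form a special affine subspace, not a random tuple space. ``Adding more coordinates'' is circular: enlarging $N$ enlarges $(\PP^N_k)^\vee(k)$, and each new coordinate enters the jet condition of every new hyperplane, so it must itself be corrected.

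The missing idea is to reserve a block of coordinates that is invisible to all jet conditions simultaneously, yet embeds everything away from the assigned points. The paper does this as follows.
\begin{itemize}
\item It builds sections $h_0,\dots,h_m$ vanishing on every $P_H^{(2)}$ with common zero set exactly $S=\{P_H\}$ (Lemma~\ref{lem:shield}). The Segre block $w_{ij}=h_i^ne_j$, with $e$ a fixed componentwise nondegenerate embedding, then embeds $X\setminus S$ and has zero first jet at every $P_H$, whatever the coefficients $b_{ij}$ are. A single twisting equation, as in your hint, would not do: the twisted block would vanish identically along its zero divisor.
\item The condition for $H=[a:b]$ therefore involves only the remaining $A$ coordinates, where it is a single linear relation among first jets at $P_H$. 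Lemma~\ref{lem:localcode} shows that $A\ge m+3$ coordinates can satisfy this relation and still embed $P_H^{(2)}$. Taking degrees large keeps the residue images of distinct $P_H$ distinct.
\item Lemma~\ref{lem:two-block-separation} glues the two blocks. Length-two subschemes in $X\setminus S$ are handled by the $w$-block and those supported on $Z$ by the $u$-block. Mixed pairs are separated because every $w_{ij}$ vanishes on $S$ while some $w_{ij}$ is nonzero off $S$.
\end{itemize}

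Your handling of nondegeneracy through test points is close to the paper's. The paper uses the sets $T_C$, together with binary values of the $h_i$ at auxiliary points so that the shield does not destroy componentwise independence. One minor correction: a component whose constant field $k_C$ is larger than $k$ has only closed points of degree divisible by $[k_C:k]$. So ``every sufficiently large degree'' should read ``arbitrarily large degree'', which is all you need.
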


Thus, for the variety $X$ itself, there is no threshold after which every nondegenerate embedding has a smooth rational hyperplane section.  The theorem also makes clear what is not being asserted: the embeddings are constructed sublinear systems, and the paper does not claim that complete linear series have the same behaviour for arbitrary $X$.

\subsection{Proof strategy and organization}

Fix $N$ and write
\[
        \Lambda_N=(\PP^N_k)^\vee(k).
\]
This is a finite set.  We choose pairwise distinct closed points
\[
        S_N=\set{P_\lambda\mid \lambda\in\Lambda_N}\subset X
\]
and split the $N+1$ projective coordinates into two blocks,
\[
        N+1=A+B,
        \qquad
        A\ge m+3,
        \qquad
        B=(m+1)(r+1).
\]
The first block, denoted
\[
        u_1,\ldots,u_A,
\]
is the anti-code block.  For a hyperplane $\lambda=[a:b]$, its $a$-part imposes one linear relation on the first-order jets of the $u_\ell$ at the assigned point $P_\lambda$.  The local construction keeps that one relation while still making the first-order neighbourhood $P_\lambda^{(2)}$ embed into $\PP^{A-1}$; the inequality $A\ge m+3$ supplies the needed room.

The second block is a shielded Segre block.  Starting from a fixed componentwise nondegenerate embedding defined by sections $e_0,\ldots,e_r$, we construct sections $h_0,\ldots,h_m$ that vanish on every $P_\lambda^{(2)}$ but have no common zero away from the assigned points.  After replacing the $h_i$ by high powers, the sections
\[
        w_{ij}=h_i^n e_j
\]
have zero first-order jet at all assigned points, so they do not affect the imposed singularities, while away from those points they provide the separation needed for an embedding.

For a rational hyperplane $\lambda=[a:b]$, the pulled-back linear form is
\[
        s_\lambda=
        \sum_{\ell=1}^{A}a_\ell u_\ell+
        \sum_{i=0}^{m}\sum_{j=0}^{r}b_{ij}w_{ij}.
\]
By construction $s_\lambda$ vanishes on $P_\lambda^{(2)}$.  Since the image of the connected component containing $P_\lambda$ is not contained in the hyperplane, this section is not identically zero there.  In a regular local ring, a nonzero local equation lying in $\mm^2$ defines a singular hypersurface.  Hence the hyperplane section is singular at $P_\lambda$.

The rest of the paper supplies the details.  Section~\ref{sec:conventions} records finite interpolation and base-change criteria.  Section~\ref{sec:local-shield} proves the local anti-code lemma and constructs the shield.  Section~\ref{sec:blocks} fixes the two coordinate blocks for a chosen $N$.  Section~\ref{sec:anticode} prescribes the global anti-code block by interpolation.  Section~\ref{sec:embedding} proves that the resulting map is an embedding and that every rational hyperplane section is singular.  Section~\ref{sec:comparison} states the consequence for Baker's question and compares the scope with earlier constructions.

The argument is independent of the density method in \cite{Poonen2004} and of semiample Bertini probabilities in \cite{ErmanWood2015}.  Recent extensions of Poonen's theorem vary the hypersurface degree in a fixed embedding rather than varying the embedding to defeat all rational hyperplanes; see \cite{Poonen2008,BucurKedlaya2012,BiluHowe2021,Bertucci2025,GhoshKrishna2023}.  Earlier finite-field pathologies and related transversality or irreducibility refinements include Katz's space-filling constructions and correction \cite{Katz1999,Katz2001}, the Charles--Poonen Bertini irreducibility theorem \cite{CharlesPoonen2016}, and results on transverse lines, pencils, and linear subspaces \cite{Asgarli2019,AsgarliGhioca2022,AsgarliDuanLai2024}.  We use the Lang--Weil estimate for closed points \cite{LangWeil1954} and standard facts on very ample systems, projection, and positivity \cite{Hartshorne1977,Jouanolou1983,Lazarsfeld2004}.

\section{Conventions and interpolation}\label{sec:conventions}

Throughout the paper
\[
        k=\F_q,
        \qquad
        |Y|=\text{the set of closed points of }Y,
        \qquad
        \kk(P)=\OO_{Y,P}/\mm_P.
\]
The hyperplanes forced to be singular are the $k$-rational hyperplanes; the nondegeneracy conclusion below is stated after arbitrary scalar extension.  If $X/k$ is smooth of pure dimension $m$ and $P\in |X|$, then
\[
        P^{(2)}=\Spec\bigl(\OO_{X,P}/\mm_P^2\bigr),
        \qquad
        \dim_{\kk(P)}H^0(P^{(2)},\OO_{P^{(2)}})=m+1.
\]
For a line bundle $\LL$ on $X$, a choice of trivialization over $P^{(2)}$ identifies
\[
        H^0(P^{(2)},\LL|_{P^{(2)}})
        \simeq \OO_{X,P}/\mm_P^2.
\]
The condition $s|_{P^{(2)}}=0$ is independent of this trivialization.  When a nonzero value of a section of a line bundle is prescribed at a finite reduced point, a trivialization of the relevant fibre is understood; changing the trivialization multiplies all prescribed nonzero values by a unit and does not affect the vanishing assertions used below.

We use Grothendieck's one-dimensional quotient convention
\[
        \PP_S(\mathcal F)=\Proj_S\bigl(\operatorname{Sym}_{\OO_S}(\mathcal F)\bigr).
\]
Thus \(\PP_S(\mathcal F)\) parametrizes invertible quotients of \(\mathcal F\), and
\(\OO_{\PP_S(\mathcal F)}(1)\) is the tautological quotient line bundle.  In particular, a basepoint-free linear system
\(W\subset H^0(Y,\mathcal L)\) defines a morphism \(Y\to\PP(W)\).  For a finite-dimensional vector space \(E\), we write
\[
        \PP_{\mathrm{lin}}(E)=\PP(E^\vee)
\]
for the projective space of one-dimensional subspaces of \(E\).

We use two forms of finite interpolation.  The first is the usual projective statement.

\begin{lemma}[Projective finite interpolation]\label{lem:interpolation-projective}
Let $\barX$ be a projective $k$-scheme, let $\Ample$ be an ample line bundle on $\barX$, let $\FF$ be a coherent sheaf on $\barX$, and let $Z\subset \barX$ be a finite closed subscheme.  For all $d\gg0$ the restriction map
\[
        H^0(\barX,\FF\otimes\Ample^{\otimes d})
        \longrightarrow
        H^0(Z,(\FF\otimes\Ample^{\otimes d})|_Z)
\]
is surjective.
\end{lemma}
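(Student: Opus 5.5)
The plan is to reduce to Serre vanishing via the ideal sheaf of $Z$. Let $\mathcal I_Z\subset\OO_{\barX}$ be the ideal sheaf of $Z$. Tensoring the exact sequence $0\to\mathcal I_Z\to\OO_{\barX}\to\OO_Z\to0$ with $\FF$ gives an exact sequence
\[
        0\longrightarrow \mathcal K\longrightarrow \FF\longrightarrow \FF|_Z\longrightarrow 0,
\]
where $\mathcal K$ is the image of $\FF\otimes\mathcal I_Z\to\FF$. This map need not be injective, so we take its image rather than the tensor product itself. The sheaf $\mathcal K$ is coherent, being a subsheaf of a coherent sheaf on a noetherian scheme. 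Tensoring with the invertible sheaf $\Ample^{\otimes d}$ preserves exactness, and $(\FF|_Z)\otimes\Ample^{\otimes d}=(\FF\otimes\Ample^{\otimes d})|_Z$.

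The long exact cohomology sequence then gives
\[
        H^0(\barX,\FF\otimes\Ample^{\otimes d})\to H^0(Z,(\FF\otimes\Ample^{\otimes d})|_Z)\to H^1(\barX,\mathcal K\otimes\Ample^{\otimes d}).
\]
Here we identify the cohomology of the sheaf pushed forward from $Z$ with cohomology on $Z$, since the inclusion $Z\hookrightarrow\barX$ is a closed immersion. By Serre vanishing, applied on the projective $k$-scheme $\barX$ with ample $\Ample$ and coherent $\mathcal K$, we have $H^1(\barX,\mathcal K\otimes\Ample^{\otimes d})=0$ for all $d\gg0$. This vanishing gives the asserted surjectivity.

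The only points needing care are the non-exactness of $\FF\otimes\mathcal I_Z$, which is handled by passing to the image as above, and the fact that Serre vanishing is stated for an ample line bundle rather than a very ample one. For the latter, one can pass to a very ample power $\Ample^{\otimes e}$ and apply vanishing to each of the finitely many coherent sheaves $\mathcal K\otimes\Ample^{\otimes i}$ with $0\le i<e$. None of these steps is a serious obstacle. Finiteness of $Z$ is not even needed for the argument, though it makes the target a finite-dimensional space of jets.
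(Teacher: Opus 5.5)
Your proof is correct and is essentially the paper's argument: your $\mathcal K=\operatorname{im}(\FF\otimes\mathcal I_Z\to\FF)$ is exactly the kernel of $\FF\to i_*i^*\FF$ used in the paper, and both proofs conclude from the vanishing of $H^1(\barX,\mathcal K\otimes\Ample^{\otimes d})$ by Serre vanishing. Your side remarks are also accurate: one can pass from ample to a very ample power, and finiteness of $Z$ is not needed for surjectivity.
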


\begin{proof}
Let
\[
        \mathcal K=
        \ker\bigl(\FF\longrightarrow i_*i^*\FF\bigr),
        \qquad i:Z\hookrightarrow\barX.
\]
Then
\[
0\longrightarrow \mathcal K
\longrightarrow \FF
\longrightarrow i_*i^*\FF
\longrightarrow0
\]
is exact.  Tensoring by the locally free sheaf $\Ample^{\otimes d}$ preserves exactness, and using the projection formula for the finite morphism $i$ gives
\[
0\longrightarrow \mathcal K\otimes\Ample^{\otimes d}
\longrightarrow \FF\otimes\Ample^{\otimes d}
\longrightarrow i_*i^*(\FF\otimes\Ample^{\otimes d})
\longrightarrow0 .
\]
The connecting homomorphism for global sections has target
\[
        H^1(\barX,\mathcal K\otimes\Ample^{\otimes d}),
\]
which is zero for all $d\gg0$ by Serre vanishing, because $\mathcal K$ is coherent.  Hence the displayed restriction map is surjective.
\end{proof}

\begin{lemma}[Quasiprojective finite interpolation]\label{lem:interpolation-quasi}
Let $X$ be a quasiprojective $k$-scheme, let $j:X\hookrightarrow \barX$ be a projective closure with $X$ open in $\barX$, and let $\Ample$ be an ample line bundle on $\barX$.  Let $\EE$ be a line bundle on $X$ and let $Z\subset X$ be a finite closed subscheme.  Then, for all $d\gg0$, the restriction map
\[
        H^0(X,\EE\otimes j^*\Ample^{\otimes d})
        \longrightarrow
        H^0(Z,(\EE\otimes j^*\Ample^{\otimes d})|_Z)
\]
is surjective.
\end{lemma}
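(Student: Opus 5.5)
The plan is to reduce Lemma~\ref{lem:interpolation-quasi} to Lemma~\ref{lem:interpolation-projective} by extending $\EE$ to a coherent sheaf on $\barX$, viewing $Z$ as a finite closed subscheme of $\barX$, and checking that the two restriction maps agree.

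\emph{Closedness of $Z$ in $\barX$.} Since $Z$ is finite over $k$, it is proper over $k$. The inclusion $Z\hookrightarrow X\hookrightarrow\barX$ is then a proper immersion, hence a closed immersion. So $Z$ is a finite closed subscheme of $\barX$ contained in the open set $X$.

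\emph{Extension of $\EE$.} Choose a coherent sheaf $\FF$ on $\barX$ with $j^*\FF=\FF|_X\simeq\EE$. This is possible by the standard extension of coherent sheaves from an open subscheme of a noetherian scheme (Hartshorne, Ex.~II.5.15). Concretely, one extends $\EE$ as a quasi-coherent sheaf by $j_*\EE$ and takes a coherent subsheaf of $j_*\EE$ restricting to $\EE$. Then $j^*(\FF\otimes\Ample^{\otimes d})\simeq\EE\otimes j^*\Ample^{\otimes d}$ for every $d$. Also, since $Z\subset X$, we have $(\FF\otimes\Ample^{\otimes d})|_Z=(\EE\otimes j^*\Ample^{\otimes d})|_Z$.

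\emph{Applying the projective lemma.} By Lemma~\ref{lem:interpolation-projective} applied to $\barX,\Ample,\FF,Z$, the restriction map
\[
H^0(\barX,\FF\otimes\Ample^{\otimes d})\longrightarrow H^0(Z,(\FF\otimes\Ample^{\otimes d})|_Z)
\]
is surjective for $d\gg0$. Restriction to $Z$ factors through restriction to the open subset $X\supset Z$. That is, the composite
\[
H^0(\barX,\FF\otimes\Ample^{\otimes d})\to H^0(X,\EE\otimes j^*\Ample^{\otimes d})\to H^0(Z,(\EE\otimes j^*\Ample^{\otimes d})|_Z)
\]
equals the surjective map above. Hence the second arrow, which is the map in the statement, is surjective for all $d\gg0$.

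The only step needing care is the existence of the coherent extension $\FF$, and this is a standard citation. No reflexivity or local freeness of $\FF$ is needed, because Lemma~\ref{lem:interpolation-projective} allows an arbitrary coherent sheaf.
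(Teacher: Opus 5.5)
Your proposal is correct and follows essentially the same route as the paper: extend $\EE$ to a coherent sheaf $\FF$ on $\barX$, show that $Z\hookrightarrow\barX$ is a closed immersion because it is a proper immersion, apply Lemma~\ref{lem:interpolation-projective}, and restrict the resulting global sections through $X$. Your explicit remark that restriction to $Z$ factors through $H^0(X,\EE\otimes j^*\Ample^{\otimes d})$ just spells out the paper's final sentence.
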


\begin{proof}
Since $\barX$ is noetherian and $X\subset\barX$ is open, the coherent sheaf $\EE$ extends to a coherent sheaf $\FF$ on $\barX$ with $\FF|_X\simeq\EE$.  The composite morphism
\[
        Z\hookrightarrow X\hookrightarrow\barX
\]
is a closed immersion: indeed, $Z$ is finite over $k$, hence proper over $k$, and its map to the separated $k$-scheme $\barX$ is proper; since $Z\to X\to\barX$ is also a locally closed immersion, it is a proper immersion and therefore a closed immersion.  Apply Lemma~\ref{lem:interpolation-projective} to $\FF$ and to this closed subscheme $Z\subset\barX$.  The resulting global sections on $\barX$ restrict to sections on $X$ with the prescribed values on $Z$.
\end{proof}

We shall repeatedly require closed points avoiding finite sets.

\begin{lemma}[Closed points of arbitrarily large degree]\label{lem:closedpoints}
Let $Y$ be a finite type $k$-scheme containing a positive-dimensional irreducible component, and let $F\subset |Y|$ be finite.  Then $Y\setminus F$ contains closed points of arbitrarily large degree over $k$.
\end{lemma}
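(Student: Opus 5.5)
We reduce to an irreducible positive-dimensional closed subscheme and count closed points there. Let $Y_0$ be a positive-dimensional irreducible component of $Y$, with its reduced structure. It is an integral $k$-scheme of finite type of dimension $e\ge1$. Replacing $Y_0$ by a nonempty affine open subset $V$ only shrinks the set of available points, so it is enough to find closed points of $V\setminus F$ of arbitrarily large degree. Since $F$ is finite, at most $\#F$ closed points are excluded. The claim therefore follows once we show that $V$ has closed points of unboundedly large degree, or even that it has infinitely many closed points of degree larger than any given bound. The key observation is that $V$ has only finitely many closed points of degree $\le D$ for each $D$. Indeed, each closed point of degree $d$ yields $d$ points of $V(\F_{q^d})$, and $V(\F_{q^d})$ is finite because $V$ embeds in some $\mathbb A^n_k$ and $\mathbb A^n(\F_{q^d})$ is finite.

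It remains to show that $V$ has infinitely many closed points. This holds because $V$ is a Jacobson scheme of positive dimension. Take a nonempty affine open $\Spec R\subset V$ with $R$ a finitely generated $k$-domain of Krull dimension $e\ge1$. By Noether normalization there is a finite injective map $k[t_1,\dots,t_e]\to R$, and the induced map on spectra is surjective. Since $\mathbb A^e_k$ has infinitely many closed points, for example the maximal ideals $(t_1-\alpha,t_2,\dots,t_e)$ attached to the infinitely many monic irreducible polynomials in $t_1$, their preimages give infinitely many closed points of $\Spec R$; points over closed points are closed because the map is finite. Only finitely many of these have degree $\le D$ or lie in $F$. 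Hence for every $D$ there is a closed point of $Y\setminus F$ of degree $>D$. Closed points of the open subset remain closed in $Y$, since closedness of points in finite type $k$-schemes is detected by finiteness of the residue field over $k$.

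There is an alternative route through the Lang--Weil estimate \cite{LangWeil1954}: $\#V(\F_{q^d})\sim c\,q^{de}$ grows without bound, while points of degree $<d$ contribute far less. That argument needs geometric irreducibility, which would require passing to a finite extension, so the Noether normalization argument is cleaner. The only delicate step is making sure the points found in $V$ are closed in $Y$ and of the stated degree. This is routine once $\kappa(P)$ is recognized as a finite extension of $k$, and it is the step I would check most carefully.
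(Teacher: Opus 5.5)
Your proof is correct, and it takes a more elementary route than the paper.

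The paper first chooses a normal affine open subset avoiding $F$ inside a positive-dimensional component. It then passes to the field of constants $k_0$ (the algebraic closure of $k$ in the function field), which makes that open subset geometrically integral over $k_0$. Finally it applies Lang--Weil over the degree-$n$ extension of $k_0$ for large primes $n$. Because such an extension has no intermediate fields, every rational point not defined over $k_0$ has exact degree $n$. This produces closed points of degree exactly $n[k_0:k]$, together with a quantitative count.

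You instead combine two soft facts and finish by pigeonhole:
\begin{enumerate}
\item For each $D$ only finitely many closed points have degree $\le D$, since $V(\F_{q^d})$ sits inside the finite set $\F_{q^d}^n$.
\item $V$ has infinitely many closed points. This follows from Noether normalization, lying-over, and the fact that a prime of an integral extension lying over a maximal ideal is itself maximal.
\end{enumerate}

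Your route needs neither Lang--Weil, nor the normality and constant-field reduction, nor geometric integrality. The paper's route gives sharper information: points of prescribed exact degrees and asymptotically many of them. Neither the lemma's statement nor its later uses need that extra information. Those uses only avoid finite sets or reach degree at least $d_0$ in Lemma~\ref{lem:localcode}.

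One small notational slip. The infinitely many maximal ideals of $k[t_1,\dots,t_e]$ you want are $(f(t_1),t_2,\dots,t_e)$ for monic irreducible $f\in k[t_1]$. Writing $(t_1-\alpha,t_2,\dots,t_e)$ makes sense only for $\alpha\in k$, which gives just $q$ ideals.
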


\begin{proof}
Choose a positive-dimensional irreducible component of $Y$, give it the reduced induced structure, and choose a nonempty affine normal open subscheme
\[
        Y_0\subset Y\setminus F
\]
inside that component.  Let $k_0$ be the algebraic closure of $k$ in $k(Y_0)$.  Since $Y_0$ is normal, every element of $k_0$ is regular on $Y_0$: on each nonempty affine open $\Spec A\subset Y_0$, it is integral over $k\subset A$ and therefore belongs to the normal domain $A$.  Thus the structure morphism of $Y_0$ factors through $\Spec k_0$.  The field $k_0$ is perfect and algebraically closed in $k(Y_0)$, so the integral $k_0$-scheme $Y_0$ is geometrically integral over $k_0$.

Write $q_0=\#k_0$ and $d=\dim Y_0\ge1$.  Lang--Weil gives, for the degree-$n$ extension $k_{0,n}/k_0$,
\[
        \#Y_0(k_{0,n})=q_0^{dn}+O(q_0^{(d-1/2)n}).
\]
Take $n$ through sufficiently large primes.  The only proper intermediate field between $k_0$ and $k_{0,n}$ is then $k_0$.  Consequently every $k_{0,n}$-point that is not defined over $k_0$ has exact degree $n$ over $k_0$.  Since $Y_0(k_0)$ is finite, Lang--Weil shows that such points exist for arbitrarily large prime $n$.  They correspond to closed points of $Y_0$, hence of $Y\setminus F$, whose degrees over $k$ are $n[k_0:k]$.  These degrees tend to infinity.
\end{proof}

\begin{lemma}[Connected smooth components and constants]\label{lem:component-constants}
Let $C$ be a nonempty smooth connected finite type $k$-scheme.  Then $C$ is integral.  If $k_C$ denotes the algebraic closure of $k$ in the function field $k(C)$, then $k_C/k$ is finite separable, the structure morphism of $C$ factors through $\Spec k_C$, and $C$ is geometrically integral over $k_C$.
\end{lemma}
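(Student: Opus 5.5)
The plan is to reduce everything to standard facts about smooth schemes and normal domains.

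\emph{Integrality.} Since $C$ is smooth over $k$, every local ring $\OO_{C,P}$ is regular, hence a domain, so $C$ is reduced and normal. A noetherian scheme whose local rings are domains has pairwise disjoint irreducible components, because a point lying on two components would have a local ring with two minimal primes. These components are finite in number, so they are open and closed. Connectedness then forces $C$ to be irreducible, and together with reducedness this gives that $C$ is integral.

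\emph{Factorization through $\Spec k_C$.} Every element of $k(C)$ algebraic over $k$ is integral over $k$. On each nonempty affine open $\Spec A\subset C$ it is therefore integral over the normal domain $A$, so it lies in $A$. This is exactly the argument used in the proof of Lemma~\ref{lem:closedpoints}. It shows that $k_C\subset \Gamma(C,\OO_C)$, so the structure morphism factors through $\Spec k_C$. Since $C$ is of finite type over $k$, the field $k(C)$ is finitely generated over $k$. Hence its algebraic closure $k_C$ in $k(C)$ is a finite extension. It is separable because $k=\F_q$ is perfect.

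\emph{Geometric integrality.} The field $k_C$ is perfect and algebraically closed in $k(C)$. It is a standard fact (for instance in EGA IV or the Stacks Project) that a field extension $K/k_C$ with $k_C$ algebraically closed in $K$ and $K/k_C$ separable is primary and geometrically integral. Here separability holds automatically because $k_C$ is perfect. Hence $k(C)\otimes_{k_C}\overline{k}$ is a domain.

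To pass from the generic point to all of $C$, we need $C\times_{k_C}\overline{k}$ to be reduced and irreducible. Reducedness comes from smoothness. The morphism $C\to\Spec k_C$ is smooth: $C\to\Spec k$ is smooth and $\Spec k_C\to\Spec k$ is étale, so this follows from the fibrewise (or Jacobian) criterion. Smoothness is preserved by base change, so $C_{\overline{k}}$ is smooth and hence reduced. For irreducibility, $C_{\overline{k}}\to C$ is flat, so every generic point of $C_{\overline{k}}$ lies over the generic point of $C$. Those generic points therefore correspond to minimal primes of $k(C)\otimes_{k_C}\overline{k}$, which is a domain, so there is exactly one.

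The only step needing care is the smoothness of $C$ over $k_C$, as opposed to over $k$. It follows because, for the étale extension $k_C/k$, the map $C\to \Spec k_C\times_k \Spec k_C$-twisted graph identifies $C$ with an open and closed subscheme of $C\times_k\Spec k_C$.
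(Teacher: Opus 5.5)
Your proof is correct and follows the same route as the paper: regularity of the local rings makes the irreducible components disjoint, hence $C$ is integral; normality of the affine coordinate rings puts $k_C$ inside $\Gamma(C,\OO_C)$; and geometric integrality comes from $k_C$ being perfect and algebraically closed in $k(C)$. The paper simply cites the standard geometric-integrality criterion at the last step, whereas you unpack it (smoothness of $C$ over $k_C$ for reducedness of $C\times_{k_C}\overline{k}$, going-down along the flat base change for irreducibility); that is fine, though your final sentence about the ``twisted graph'' is garbled, and smoothness over $k_C$ follows more directly from $C$ being regular and of finite type over the perfect field $k_C$.
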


\begin{proof}
The field $k$ is perfect.  Since $C/k$ is smooth, $C$ is regular.  Every local ring of a regular scheme is a regular local ring, hence a domain; therefore a regular scheme is locally irreducible and locally reduced.  It follows that distinct irreducible components of the noetherian scheme $C$ cannot meet.  The irreducible components are consequently pairwise disjoint open-and-closed subsets.  Since $C$ is nonempty and connected, it has a single irreducible component.  Thus $C$ is irreducible and reduced, hence integral.

The extension $k(C)/k$ is finitely generated, so the algebraic closure $k_C$ of the finite field $k$ inside $k(C)$ is a finite field.  Hence $k_C/k$ is finite separable.  Let $\alpha\in k_C$.  On any nonempty affine open $U=\Spec A\subset C$, the element $\alpha\in k(C)=\operatorname{Frac}(A)$ satisfies a monic polynomial with coefficients in $k\subset A$.  Since $C$ is regular, $A$ is a normal domain, so $\alpha\in A$.  Thus every element of $k_C$ is a global regular function on $C$, and the structure morphism factors through $\Spec k_C$.

Since $k_C/k$ is finite and $C$ is of finite type over $k$, the induced $k_C$-scheme $C$ is of finite type over $k_C$.  Finally, $k_C$ is perfect and is algebraically closed in $k(C)$ by definition.  The standard criterion for geometric integrality over a perfect field then implies that the integral $k_C$-scheme $C$ is geometrically integral over $k_C$.
\end{proof}

\begin{lemma}[Base change for global sections on quasi-compact quasi-separated schemes]\label{lem:H0-base-change}
Let $F$ be a field, let $Y$ be a quasi-compact quasi-separated $F$-scheme, and let $\mathcal L$ be a line bundle on $Y$.  For every field extension $E/F$, the natural map
\[
        H^0(Y,\mathcal L)\otimes_F E
        \longrightarrow
        H^0(Y_E,\mathcal L_E)
\]
is an isomorphism.  In particular, a finite set of $F$-linearly independent global sections remains linearly independent after base change to $E$.
\end{lemma}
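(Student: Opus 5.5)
The plan is to reduce to the affine case by flat base change for quasi-coherent cohomology, and then glue using the sheaf axiom.

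First, if $Y=\Spec A$ is affine, then $\mathcal L$ corresponds to an $A$-module $L$, and $Y_E=\Spec(A\otimes_F E)$. The pullback $\mathcal L_E$ corresponds to $L\otimes_A(A\otimes_F E)\simeq L\otimes_F E$. Hence the natural map $H^0(Y,\mathcal L)\otimes_F E\to H^0(Y_E,\mathcal L_E)$ is the identity $L\otimes_F E\to L\otimes_F E$.

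In general, I will use that $Y$ is quasi-compact and cover it by finitely many affine opens $U_1,\ldots,U_n$. Since $Y$ is quasi-separated, each $U_i\cap U_j$ is quasi-compact, so it is covered by finitely many affine opens $V_{ijk}$. The sheaf axiom gives an exact sequence
\[
0\longrightarrow H^0(Y,\mathcal L)\longrightarrow \prod_i H^0(U_i,\mathcal L)\longrightarrow \prod_{i,j,k} H^0(V_{ijk},\mathcal L).
\]
The functor $-\otimes_F E$ is exact because $E$ is flat over the field $F$. It also commutes with the finite products appearing here; this is exactly where quasi-compactness and quasi-separatedness are needed.

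Next, the base changes $U_{i,E}$ and $V_{ijk,E}$ are affine and cover $Y_E$ and $(U_i\cap U_j)_E$ respectively. So the sheaf axiom gives the analogous exact sequence on $Y_E$. The natural maps assemble into a morphism of exact sequences from the tensored sequence to the sequence on $Y_E$. By the affine case, the two right-hand vertical maps are isomorphisms, so the five lemma (or a direct kernel comparison) shows that the left-hand map is an isomorphism.

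The "in particular" clause follows at once. If $s_1,\ldots,s_t\in H^0(Y,\mathcal L)$ are $F$-linearly independent, then $s_1\otimes1,\ldots,s_t\otimes1$ are $E$-linearly independent in $H^0(Y,\mathcal L)\otimes_F E$, since we may extend to an $F$-basis. The isomorphism then carries them to independent elements of $H^0(Y_E,\mathcal L_E)$.

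The only delicate point is the commutation of $\otimes_F E$ with products. Infinite products do not commute with tensor products in general, so the finite covers are essential; I would state explicitly where quasi-compactness and quasi-separatedness are used.
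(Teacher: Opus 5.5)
Your proposal is correct and follows the paper's proof essentially step for step. Both arguments use a finite affine cover, finite affine covers of the pairwise intersections (via quasi-separatedness), exactness of the sheaf equalizer, flatness of $E$ over $F$ together with commutation of $\otimes_F E$ with finite products, and the affine case to identify the right-hand terms.
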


\begin{proof}
Choose a finite affine open cover $Y=\bigcup_{i=1}^n U_i$.  Since $Y$ is quasi-separated, every intersection $U_i\cap U_j$ is quasi-compact; choose a finite affine open cover
\[
        U_i\cap U_j=\bigcup_a U_{ij,a}.
\]
For the quasi-coherent sheaf $\mathcal L$, the sheaf equalizer for this finite affine cover is exact:
\[
0\longrightarrow H^0(Y,\mathcal L)
\longrightarrow \prod_i H^0(U_i,\mathcal L)
\rightrightarrows
\prod_{i,j,a}H^0(U_{ij,a},\mathcal L).
\]
Tensoring with the flat $F$-module $E$ preserves this equalizer and commutes with the finite products.  On every affine open, sections of a quasi-coherent sheaf commute with extension of scalars.  After base change, the affine opens $(U_i)_E$ and $(U_{ij,a})_E$ give the corresponding equalizer computing $H^0(Y_E,\mathcal L_E)$.  This proves the isomorphism.  The assertion about linear independence is immediate.
\end{proof}

\begin{lemma}[Componentwise independence after arbitrary scalar extension]\label{lem:component-base-change}
Let $C$ be as in Lemma~\ref{lem:component-constants}, let $\mathcal L$ be a line bundle on $C$, and let
\[
        s_0,\ldots,s_r\in H^0(C,\mathcal L)
\]
be $k_C$-linearly independent.  For every field extension $K/k$ and every connected component $D$ of $C_K$, the restrictions $s_0|_D,\ldots,s_r|_D$ are linearly independent.  If, in addition, $s_0,\ldots,s_r$ are basepoint-free on $C$, then the morphism defined by these sections is defined on $D$ and does not map $D$ into any hyperplane.
\end{lemma}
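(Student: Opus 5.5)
The plan is to identify the connected components of $C_K$ explicitly and then apply Lemma~\ref{lem:H0-base-change} over the field $k_C$ rather than over $k$.

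First I determine the components. By Lemma~\ref{lem:component-constants}, the structure morphism of $C$ factors through $\Spec k_C$, and $C$ is geometrically integral over $k_C$. This gives
\[
        C_K=C\times_{\Spec k}\Spec K\simeq C\times_{\Spec k_C}\Spec\bigl(k_C\otimes_k K\bigr).
\]
Since $k_C/k$ is finite separable, $k_C\otimes_k K$ is a finite étale $K$-algebra. Hence it is a finite product $\prod_{i}L_i$ of field extensions $L_i/K$, and each $L_i$ receives a $k$-embedding $k_C\to L_i$. Therefore
\[
        C_K=\coprod_i C\times_{k_C}\Spec L_i .
\]
Each factor $C_{L_i}:=C\times_{k_C}\Spec L_i$ is integral because $C$ is geometrically integral over $k_C$, so in particular it is connected and nonempty. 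The factors are open and closed and pairwise disjoint, so they are exactly the connected components of $C_K$. Thus every $D$ has the form $C_{L_i}$ for some $i$.

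Next comes the independence statement. Because $C$ is a $k_C$-scheme, $H^0(C,\mathcal L)$ is a $k_C$-vector space. The scheme $C$ is of finite type over $k_C$, hence quasi-compact and quasi-separated. Lemma~\ref{lem:H0-base-change}, applied with $F=k_C$ and $E=L_i$, gives an isomorphism $H^0(C,\mathcal L)\otimes_{k_C}L_i\simeq H^0(D,\mathcal L_D)$. Under this isomorphism $s_j\otimes1$ maps to $s_j|_D$. I need to check carefully that the pullback along $D\to C_K\to C$ agrees with the pullback along $C_{L_i}\to C$ over $k_C$; this is the one point of bookkeeping where care is required. Granting this, the $s_j|_D$ are $L_i$-linearly independent. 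They are then a fortiori $K$-linearly independent, since $K\subset L_i$ and any $K$-linear relation is also an $L_i$-linear relation.

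Finally, the geometric statement. Basepoint-freeness is preserved under pullback, so the sections $s_j|_D$ have no common zero on $D$. Hence the morphism $D\to\PP^r_K$ is defined, and it is the base change of the morphism defined by the $s_j$ on $C$. Suppose the image of $D$ lay in a hyperplane $\sum_j c_jx_j=0$ with $c\in K^{r+1}\setminus\{0\}$. Pulling back the linear form gives $\sum_j c_j s_j|_D=0$ in $H^0(D,\mathcal L_D)$, which contradicts the independence just proved.

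The main obstacle is the first step: one must recognize that the components of $C_K$ are governed by $k_C\otimes_k K$, and not by $C$ alone. This is exactly why the hypothesis asks for $k_C$-independence, which is stronger than $k$-independence. Once that decomposition is in place, the remaining steps are formal.
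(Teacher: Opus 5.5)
Your proposal is correct and takes essentially the same route as the paper: decompose $k_C\otimes_k K$ into fields, identify the connected components of $C_K$ with the integral schemes $C\times_{k_C}\Spec L_i$, apply Lemma~\ref{lem:H0-base-change} over $k_C$, and deduce non-degeneracy from the resulting linear independence. One small addition would help in the last step: state that $D$ is integral, hence reduced, so that containment of the image of $D$ in a hyperplane as a set already forces the pulled-back linear form to vanish.
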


\begin{proof}
Because $k_C/k$ is finite separable, the $K$-algebra $k_C\otimes_k K$ is finite etale and decomposes as a product of finite separable field extensions
\[
        k_C\otimes_k K\simeq \prod_\alpha K_\alpha .
\]
Since $C$ is geometrically integral over $k_C$,
\[
        C_K\simeq C\times_{k_C}(k_C\otimes_k K)
        \simeq \coprod_\alpha C_{K_\alpha},
\]
and the schemes $C_{K_\alpha}$ are precisely the connected components of $C_K$.  The $k_C$-scheme $C$ is of finite type, hence quasi-compact and quasi-separated, so Lemma~\ref{lem:H0-base-change} gives
\[
        H^0(C,\mathcal L)\otimes_{k_C}K_\alpha
        \simeq
        H^0(C_{K_\alpha},\mathcal L_{K_\alpha}).
\]
Tensoring a finite linearly independent set with a field preserves linear independence, proving the first assertion.  If $s_0,\ldots,s_r$ are basepoint-free on $C$, their base changes define the base-changed morphism on each $C_{K_\alpha}$.  After viewing the component $C_{K_\alpha}$ as a $K_\alpha$-scheme, containment of its image in a hyperplane over $K_\alpha$ would give a nontrivial $K_\alpha$-linear relation among the restricted sections, contradicting the linear independence just proved.  In particular, containment in a hyperplane over the smaller field $K$ is impossible.
\end{proof}

\begin{lemma}[Projective length-two closed-immersion criterion]\label{lem:linear-system-criterion}
Let $Y$ be a projective $k$-scheme, let $\mathcal L$ be a line bundle on $Y$, and let
$W\subset H^0(Y,\mathcal L)$ be a finite-dimensional basepoint-free $k$-linear system.  Suppose that, for every algebraically closed field $\Omega/k$ and every zero-dimensional closed subscheme
\[
        \xi\subset Y_\Omega
\]
of length two, the restriction map
\[
        W_\Omega\longrightarrow H^0(\xi,\mathcal L_\Omega|_\xi)
\]
is surjective.  Then the morphism
\[
        \phi_W:Y\longrightarrow \PP(W)
\]
associated with $W$ is a closed immersion.
\end{lemma}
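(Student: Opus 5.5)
The plan is to reduce to the standard criterion: a morphism $\phi:Y\to\PP^n_k$ from a projective $k$-scheme is a closed immersion if and only if it is proper, injective on points, and a monomorphism, i.e.\ unramified with injective geometric fibres. Here $\phi_W$ is automatically proper, because $Y$ is projective and $\PP(W)$ is separated. Being a closed immersion is fpqc-local on the base, so it suffices to prove that the base change $(\phi_W)_\Omega:Y_\Omega\to\PP(W_\Omega)$ is a closed immersion for one algebraically closed field $\Omega\supset k$. Formation of $\phi_W$ commutes with base change, since $W_\Omega$ remains basepoint-free and defines $(\phi_W)_\Omega$. So we may assume $k=\Omega$ is algebraically closed, keeping the hypothesis for length-two subschemes of $Y$ over $\Omega$ and for all larger algebraically closed fields.

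First we show injectivity on closed points and, via the larger fields, on geometric points. Let $y_1\neq y_2$ be $\Omega$-points and put $\xi=\{y_1,y_2\}$, which is reduced of length two. Surjectivity of $W\to H^0(\xi,\mathcal L|_\xi)\simeq \Omega\times\Omega$ gives a section $s\in W$ vanishing at $y_1$ and not at $y_2$. Hence the hyperplane $\{s=0\}$ contains $\phi(y_1)$ but not $\phi(y_2)$, so $\phi(y_1)\neq\phi(y_2)$. Using arbitrary algebraically closed $\Omega$ handles non-closed points as well. Thus every geometric fibre of $\phi_W$ has at most one point.

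Second we show that $\phi_W$ is unramified, i.e.\ that the map on cotangent spaces $\mm_{\phi(y)}/\mm_{\phi(y)}^2\to\mm_y/\mm_y^2$ is surjective at every closed point $y$. Let $v$ be a nonzero tangent vector at $y$, which corresponds to a length-two subscheme $\xi\cong\Spec\Omega[\epsilon]/(\epsilon^2)$ supported at $y$. Surjectivity of $W\to H^0(\xi,\mathcal L|_\xi)$ gives $s\in W$ with $s(y)=0$ and nonzero derivative along $v$. Choose $s_0\in W$ with $s_0(y)\neq0$; then $s/s_0$ is a linear coordinate function pulled back from $\PP(W)$ whose differential does not kill $v$. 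Therefore the dual map on tangent spaces is injective, so $\Omega^1_{Y/\PP(W)}$ vanishes at $y$. By Nakayama it vanishes near $y$, and $\phi_W$ is unramified.

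Finally we conclude. A proper morphism with finite fibres is finite. A finite, unramified morphism that is universally injective on points is a monomorphism, because its geometric fibres are single reduced points, i.e.\ copies of $\Spec\Omega$. A proper monomorphism is a closed immersion (EGA IV 18.12.6). Alternatively, for finite $\phi$ one checks that $\OO_{\PP}\to\phi_*\OO_Y$ is surjective using Nakayama on the fibre $\phi^{-1}(\phi(y))$, which equals $\Spec\kappa(y)$ by the two steps above. I expect the main obstacle to be this last globalization step, together with the care needed over non-closed points and the base change. I would handle it by citing the proper-monomorphism criterion rather than proving it by hand.
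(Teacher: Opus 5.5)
Your proposal is correct and follows essentially the paper's route: reduced length-two subschemes separate geometric points, and the nonreduced ones $\Spec\Omega[\varepsilon]/(\varepsilon^2)$ force injective tangent maps and hence unramifiedness, which gives a monomorphism, and a proper monomorphism is a closed immersion. The only difference is bookkeeping. You descend the closed-immersion property from $\Omega$ by fpqc descent at the outset and pass through finiteness, whereas the paper stays over $k$, descends the vanishing of $\Omega_{Y/\PP(W)}$ via faithful flatness of $\kappa(y)\to\Omega$, and shows directly that the diagonal is a surjective open immersion.
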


\begin{proof}
We first prove the two geometric consequences of the length-two hypothesis.  Let $\Omega/k$ be algebraically closed.  If $y_1,y_2\in Y(\Omega)$ are distinct, then for the reduced length-two subscheme
\[
        \xi=\{y_1\}\amalg\{y_2\}
\]
the map
\[
        W_\Omega\to \mathcal L_{y_1}\oplus\mathcal L_{y_2}
\]
is surjective.  Hence there is a section vanishing at $y_1$ and not at $y_2$, so the two evaluation quotients of $W_\Omega$ are not proportional.  Thus $\phi_{W,\Omega}(y_1)\ne\phi_{W,\Omega}(y_2)$.

Next let $0\ne v\in T_yY_\Omega$ be a tangent vector at a geometric point $y$.  The corresponding morphism
\[
        \Spec \Omega[\varepsilon]/(\varepsilon^2)\longrightarrow Y_\Omega
\]
is a closed immersion onto a length-two subscheme $\xi_v$, because the induced nonzero map
\[
        \mathfrak m_y/\mathfrak m_y^2\longrightarrow (\varepsilon)
\]
is surjective.  Choose $s_0\in W_\Omega$ with $s_0(y)\ne0$.  On the affine chart of $\PP(W_\Omega)$ defined by $s_0\ne0$, the morphism is given by the functions $s/s_0$ with $s\in W_\Omega$.  If $d\phi_{W,\Omega}(v)=0$, all these functions have zero first-order variation along $\xi_v$.  Equivalently, the image of
\[
        W_\Omega\to H^0(\xi_v,\mathcal L_\Omega|_{\xi_v})
\]
is generated by the restriction of $s_0$ and is therefore at most one-dimensional.  This contradicts the assumed surjectivity, since the target has dimension two over $\Omega$.  Hence every geometric tangent map is injective.

We now pass to the morphism over $k$.  Let $y\in Y$ and choose an algebraic closure $\Omega$ of $\kk(y)$.  The injectivity of the tangent map at the corresponding geometric point is dual to the surjectivity of
\[
        \phi_W^*\Omega_{\PP(W)/k}\otimes\Omega
        \longrightarrow
        \Omega_{Y/k}\otimes\Omega .
\]
By right exactness of the standard cotangent sequence
\[
        \phi_W^*\Omega_{\PP(W)/k}
        \longrightarrow
        \Omega_{Y/k}
        \longrightarrow
        \Omega_{Y/\PP(W)}
        \longrightarrow0,
\]
we get
\[
        \Omega_{Y/\PP(W)}\otimes\Omega=0.
\]
Since field extension from $\kk(y)$ to $\Omega$ is faithfully flat, this implies
\[
        \Omega_{Y/\PP(W)}\otimes\kk(y)=0.
\]
Nakayama's lemma then shows that the coherent sheaf $\Omega_{Y/\PP(W)}$ vanishes at $y$.  As $y$ was arbitrary, $\Omega_{Y/\PP(W)}=0$.  The morphism $\phi_W$ is locally of finite type, hence it is unramified.

For an unramified morphism the diagonal is an open immersion.  We claim that the diagonal
\[
        Y\longrightarrow Y\times_{\PP(W)}Y
\]
is surjective on points.  A point of the fibre product with residue field $F$ gives two $F$-valued points of $Y$ with the same image in $\PP(W)$.  After extending $F$ to an algebraic closure $\Omega$, these become two geometric points of $Y_\Omega$ with the same image.  The geometric-point injectivity proved above forces them to be equal after base change to $\Omega$; faithful flat descent for morphisms from $\Spec F$ then shows that the two original $F$-valued points were equal.  Hence the original point of the fibre product lies in the image of the diagonal.  Thus the diagonal is a surjective open immersion, hence an isomorphism, and $\phi_W$ is a monomorphism.

Since $Y$ is projective over $k$ and $\PP(W)$ is separated over $k$, the morphism $\phi_W$ is proper.  A proper monomorphism of schemes is a closed immersion \cite[Tag 04XV]{StacksProject}.  Therefore $\phi_W$ is a closed immersion.
\end{proof}

\begin{lemma}[Length-two span locus and projection]\label{lem:length-two-projection}
Let $Y\subset\PP(V)$ be a smooth locally closed $k$-subscheme of pure dimension $m$, embedded by a finite-dimensional linear system $V$.  Let $\Sigma_2(Y)$ be the Zariski closure of the union of the spans of all geometric length-two subschemes of $Y$.  Then
\[
        \dim \Sigma_2(Y)\le 2m+1.
\]
If $W\subset V$ is a subspace and $\PP(V/W)\cap\Sigma_2(Y)=\varnothing$, then $W$ separates every geometric length-two subscheme of $Y$.  Moreover, if $W$ fails to separate some geometric length-two subscheme, then $\PP(V/W)$ meets the span of that subscheme and hence meets $\Sigma_2(Y)$.
\end{lemma}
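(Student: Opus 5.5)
The plan is to prove the three assertions of Lemma~\ref{lem:length-two-projection} in turn.

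For the dimension bound, I would parametrize $\Sigma_2(Y)$ by an incidence correspondence of dimension at most $2m+1$. Consider the variety of pairs $(\xi,x)$, where $\xi$ is a length-two subscheme of $Y$ and $x$ is a point of the line spanned by $\xi$. Since $Y\subset\PP(V)$ is an embedding, every length-two subscheme spans a line; this is where the hypothesis that $Y$ is embedded by $V$ enters. The parameter space $\Hilb^2(Y)$ has dimension $2m$. Its open part is $(Y\times Y\setminus\Delta)/S_2$, of dimension $2m$. Its boundary is the projectivized tangent bundle $\PP(T_Y)$, of dimension $2m-1$. Both statements use the smoothness of $Y$, which makes $\Hilb^2(Y)$ the blow-up of $\mathrm{Sym}^2Y$ along the diagonal. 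Over $\Hilb^2(Y)$, the spans form a $\PP^1$-bundle: it is the projectivization of the rank-two bundle whose fibre at $\xi$ is $(V^\vee\to H^0(\xi,\OO(1)|_\xi))$ dualized. This bundle maps to $\PP(V)$, and its image is constructible of dimension at most $2m+1$.

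To avoid depending on the existence theory for Hilbert schemes, I could instead work with two explicit families separately. The first is the secant family over $Y\times Y\setminus\Delta$, which has dimension $2m+1$. The second is the tangent family over $\PP(T_Y)$, which has dimension $2m$. The closure of the union of their images then has dimension at most $2m+1$. All of this can be checked after base change to $\bar k$, since dimension is insensitive to field extension.

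For the separation statement, I would take a geometric length-two $\xi\subset Y_\Omega$ and consider its span $L_\xi=\PP(H^0(\xi,\OO(1)|_\xi))\subset\PP(V_\Omega)$. The restriction $V_\Omega\to H^0(\xi,\OO(1)|_\xi)$ is surjective by the embedding hypothesis. The question is whether $W_\Omega\to H^0(\xi,\OO(1)|_\xi)$ is surjective. Under Grothendieck's quotient convention, the points of $\PP(V)$ lying in $\PP(V/W)$ are exactly the rank-one quotients of $V$ that kill $W$. If the image of $W_\Omega$ in the two-dimensional space $H^0(\xi,\OO(1)|_\xi)$ is a proper subspace, then there is a rank-one quotient $H^0(\xi,\OO(1)|_\xi)\to\Omega$ vanishing on that image. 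Composing gives a quotient of $V_\Omega$ that kills $W_\Omega$. That point lies in $\PP(V/W)_\Omega\cap L_\xi$, and hence in $\Sigma_2(Y)_\Omega$.

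This proves the final assertion, and the middle assertion is its contrapositive. The only remaining point is that emptiness of $\PP(V/W)\cap\Sigma_2(Y)$ over $k$ is equivalent to emptiness after base change to $\Omega$. The main care needed is to keep the quotient convention straight when identifying the span.
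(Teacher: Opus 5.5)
Your proposal is correct and follows the paper's proof essentially step for step. It uses the same decomposition of $\Hilb^2(Y)$ into a secant part of dimension $2m$ and a tangent part of dimension $2m-1$ with a line of span over each point, and the same argument that a non-surjective $W_\Omega\to H^0(\xi,\OO(1)|_\xi)$ yields a rank-one quotient killing the image of $W_\Omega$, i.e.\ a point of $\PP((V/W)_\Omega)$ on the span of $\xi$. Two small points: the paper gives a short affine-chart proof that $V_\Omega\to H^0(\xi,\OO(1)|_\xi)$ is surjective, where you simply take this from the embedding hypothesis; and in your first paragraph the source of this map should be $V$ rather than $V^\vee$, since under the quotient convention the linear forms on $\PP(V)$ are the elements of $V$.
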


\begin{proof}
After base change to $\bar k$, the Hilbert scheme $\Hilb^2(Y_{\bar k})$ has dimension at most $2m$.  The locus of two distinct reduced points is dominated by $Y_{\bar k}\times Y_{\bar k}\setminus\Delta$ and has dimension at most $2m$.  If $m=0$, the smooth scheme $Y_{\bar k}$ is reduced and has no nonreduced length-two subschemes.  If $m\ge1$, the nonreduced locus is parametrized by a point of $Y_{\bar k}$ together with a line in its tangent space, and has dimension $m+(m-1)\le2m-1$.  Dimension is unchanged by extension of the ground field, so
\[
        \dim\Hilb^2(Y)\le 2m.
\]
For each length-two subscheme $\xi$, the image of $\xi$ in $\PP(V)$ spans a projective linear space of dimension at most one.  Therefore the union of these spans is dominated by a projective bundle of relative dimension at most one over $\Hilb^2(Y)$, and has dimension at most $2m+1$.

For the projection assertion, fix an algebraically closed extension $\Omega/k$ and a length-two subscheme $\xi\subset Y_\Omega$.  The composite
\[
        \xi\longrightarrow Y_\Omega\longrightarrow \PP(V_\Omega)
\]
is a locally closed immersion.  Since $\xi$ is finite over $\Omega$, it is proper over $\Omega$; because projective space is separated over $\Omega$, the composite is proper.  A proper locally closed immersion is a closed immersion, so we regard $\xi$ as a closed length-two subscheme of $\PP(V_\Omega)$.

Linear forms on projective space restrict surjectively to every closed subscheme of length two.  Indeed, choose a linear form $\ell_0$ that is nonzero at every point of the support of $\xi$.  On the affine chart $D_+(\ell_0)$, the functions $\ell/\ell_0$, with $\ell$ ranging over ambient linear forms, generate $H^0(\xi,\OO_\xi)$ as an $\Omega$-algebra because $\xi\hookrightarrow D_+(\ell_0)$ is a closed immersion.  This algebra has $\Omega$-dimension two, so the unit together with one of these affine linear functions spans it.  After using $\ell_0$ to trivialize $\OO(1)|_\xi$, it follows that ambient linear forms, identified with $V_\Omega$, restrict surjectively to
\[
        H^0(\xi,\OO_{\PP(V_\Omega)}(1)|_\xi).
\]
Equivalently, the scheme-theoretic projective span of $\xi$ is a line.
If $W_\Omega$ does not restrict surjectively, then some one-dimensional quotient of this two-dimensional vector space annihilates the image of $W_\Omega$.  Composing with the surjection from $V_\Omega$ gives a point of the projective span of $\xi$ whose quotient map kills $W_\Omega$, and hence factors through $V_\Omega/W_\Omega$.  Thus this point lies in $\PP((V/W)_\Omega)$.  Conversely, a point of the span lying in $\PP((V/W)_\Omega)$ gives a one-dimensional quotient annihilating the image of $W_\Omega$, so $W$ fails to separate $\xi$.  This proves both assertions about projection and separation.
\end{proof}

\begin{lemma}[A finite-field total-degree bound for rational points]\label{lem:finite-field-degree-bound}
For a projective $k$-scheme $B\subset\PP^n_k$, define its total projective degree by
\[
        \deg^{\mathrm{tot}} B
        =\sum_{Z\in\operatorname{Irr}(B_{\mathrm{red}})}\deg Z,
\]
where every irreducible component of $B_{\mathrm{red}}$, in every dimension, is included in the sum.  There is a constant $C_q$, depending only on $q$, with the following property.  If $\dim B\le D$, then, for every $e\ge1$,
\[
        \#B(\F_{q^e})\le C_q\deg^{\mathrm{tot}}(B)q^{De}.
\]
The same bound holds with any number $\Delta\ge\deg^{\mathrm{tot}}(B)$ in place of $\deg^{\mathrm{tot}}(B)$.
\end{lemma}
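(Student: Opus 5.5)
The plan is to reduce to the irreducible components of $B_{\mathrm{red}}$ and then bound each one by a linear-projection argument, with constants depending only on $q$.

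Since $B(\F_{q^e})=B_{\mathrm{red}}(\F_{q^e})$, and every rational point lies on some irreducible component, we have
\[
        \#B(\F_{q^e})\le\sum_{Z}\#Z(\F_{q^e})
\]
over the irreducible components $Z$ of $B_{\mathrm{red}}$. It therefore suffices to prove the following claim: for every irreducible (possibly not geometrically irreducible) closed subvariety $Z\subset\PP^n_k$ of dimension $d\le D$,
\[
        \#Z(\F_{q^e})\le C_q\deg(Z)\,q^{de}\le C_q\deg(Z)\,q^{De}.
\]
Summing over components then gives the bound with $\deg^{\mathrm{tot}}(B)$, and replacing $\deg^{\mathrm{tot}}(B)$ by any $\Delta\ge\deg^{\mathrm{tot}}(B)$ is trivial. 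The degree $\deg Z$ is the geometric degree, that is, the sum of the degrees of the geometric components of $Z_{\bar k}$.

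The cleanest route to the claim is the elementary Lang--Weil-type upper bound:
\[
        \#Z(\F_{q'})\le \deg(Z)\,\#\PP^d(\F_{q'})\qquad\text{for every finite field }\F_{q'}\supseteq k.
\]
I would prove it by induction on $d$, using hyperplane sections defined over $k$.

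\begin{itemize}
\item \emph{Case $d=0$.} The geometric points of $Z$ number at most $\deg Z$, so the bound holds.
\item \emph{Inductive step.} Choose a $k$-rational linear form $h$ not vanishing identically on $Z$; this exists because $Z$ is not contained in every rational hyperplane, as the coordinate hyperplanes already suffice. Consider the pencil-type covering: each point $x\in Z(\F_{q'})$ lies on some $\F_{q'}$-rational hyperplane. Instead of covering by varying hyperplanes, I would use the standard trick of covering $\PP^n(\F_{q'})$ by the $q'+1$ members $\{\lambda h-\mu h'=0\}$ of a pencil spanned by $h$ and a second form $h'$. These members cover all rational points: each rational point either lies on the base locus $\{h=h'=0\}$ or on exactly one member.
\item \emph{Applying induction.} A member not containing a given component of $Z_{\bar k}$ cuts it in dimension $d-1$ with degree at most its degree, by B\'ezout. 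Members containing a component are the problem. For a geometric component of dimension $d\ge1$, at most $\deg$ members of the pencil can contain it, provided $h,h'$ are chosen so that $h'/h$ is nonconstant on every geometric component.
\end{itemize}

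The main obstacle is that $h$ and $h'$ must be defined over $\F_{q'}$ or $k$ with these genericity properties, uniformly in the degree; over small fields, generic choices may not exist. I would circumvent this by allowing a bounded extension of $k$, or by projecting. A simpler alternative is to use Noether normalization-type finite linear projections $Z\to\PP^d$ defined over an extension $\F_{q^{c}}$ with $c$ depending only on $d\le n$. The map is finite of degree at most $\deg Z$, so fibers over rational points have at most $\deg Z$ geometric points, giving
\[
        \#Z(\F_{q^{e}})\le\deg(Z)\,\#\PP^d(\F_{q^{e}})\le 2\deg(Z)\,q^{de}
\]
when the projection is defined over $\F_{q^e}$. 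The obstacle is making the projection rational without dependence on $n$. To handle this, I would instead cite the known bound of Lachaud--Rolland / Couvreur, namely $\#Z(\F_{q'})\le\deg(Z)\,\#\PP^d(\F_{q'})$ for any variety and any $\F_{q'}$, proved via intersections with rational linear spaces. Taking $C_q=2$ then works, since $\#\PP^d(\F_{q^e})\le 2q^{de}$.
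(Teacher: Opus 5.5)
Your final argument (reduce to the irreducible components of $B_{\mathrm{red}}$, bound each by the Lachaud--Rolland/Couvreur estimate $\#Z(\F_{q^e})\le\deg(Z)\,\#\PP^{d}(\F_{q^e})$, and sum) is correct and is essentially the paper's proof, which also rests on Couvreur's bound. The differences are minor. The paper first base-changes to $\F_{q^e}$, checks that the total degree is unchanged, and applies the bound to $\F_{q^e}$-irreducible components, whereas you apply the equidimensional form of the bound directly to each $k$-component, using that its degree equals its geometric degree. The paper also gets $C_q=(1-q^{-1})^{-1}$ where you get $C_q=2$, and your exploratory pencil and projection sketches can simply be dropped.
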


\begin{proof}
The set of rational points is unchanged after passing to $B_{\mathrm{red}}$.  Put $Q=q^e$ and
\[
        B_e=(B_{\F_Q})_{\mathrm{red}}\subset\PP^n_{\F_Q}.
\]
Then $B(\F_Q)=B_e(\F_Q)$.  Since $k$ is perfect and $\F_Q/k$ is finite separable, the base change of the reduced finite-type $k$-scheme $B_{\mathrm{red}}$ is reduced; hence
\[
        B_e=(B_{\mathrm{red}})_{\F_Q}.
\]
Let $Z$ be an irreducible component of $B_{\mathrm{red}}$, and let
\[
        Z_{\F_Q}=Z'_1\cup\cdots\cup Z'_a
\]
be its decomposition into irreducible components.  All $Z'_\nu$ have dimension $\dim Z$.  The Hilbert polynomial of $Z_{\F_Q}\subset\PP^n_{\F_Q}$ is obtained from that of $Z\subset\PP^n_k$ by extension of scalars and therefore has the same leading coefficient.  Since the intersections of distinct $Z'_\nu$ have smaller dimension, the leading coefficient of the Hilbert polynomial of their union is the sum of their leading coefficients.  Hence
\[
        \sum_{\nu=1}^a\deg Z'_\nu=\deg Z.
\]
Every irreducible component of $Z_{\F_Q}$ dominates $Z$ under the finite projection $Z_{\F_Q}\to Z$.  Therefore a component lying over $Z$ cannot coincide with a component lying over a distinct irreducible component of $B_{\mathrm{red}}$.  Summing the displayed equality over all irreducible components of $B_{\mathrm{red}}$ gives
\[
        \deg^{\mathrm{tot}} B_e
        =\deg^{\mathrm{tot}} B.
\]

Put
\[
        \pi_a=1+Q+\cdots+Q^a\quad(a\ge0),
        \qquad
        \pi_a=0\quad(a<0).
\]
Let $W\subset B_e$ be an irreducible component of dimension $d_W\le D$.  If $d_W=n$, then $W=\PP^n_{\F_Q}$, so
\[
        \#W(\F_Q)=\pi_n=\deg(W)\pi_{d_W}.
\]
If $d_W<n$, Couvreur's arbitrary-projective-variety bound \cite{Couvreur2015} over the field $\F_Q$ gives
\[
        \#W(\F_Q)
        \le
        \deg(W)(\pi_{d_W}-\pi_{2d_W-n})+\pi_{2d_W-n}.
\]
Since $\deg(W)\ge1$, this again implies
\[
        \#W(\F_Q)
        \le
        \deg(W)\pi_{d_W}
        =
        \deg(W)(1+Q+\cdots+Q^{d_W}).
\]
Thus the last inequality holds for every irreducible component $W$ of $B_e$.
The $\F_Q$-points of $B_e$ are contained in the union of the $\F_Q$-points of its irreducible components, so
\[
\begin{aligned}
        \#B(\F_Q)
        &=\#B_e(\F_Q)                                                     \\
        &\le
        \sum_{W\in\operatorname{Irr}(B_e)}
        \deg(W)(1+Q+\cdots+Q^{d_W})                                      \\
        &\le
        (1-q^{-1})^{-1}q^{De}
        \sum_{W\in\operatorname{Irr}(B_e)}\deg(W)                         \\
        &=
        (1-q^{-1})^{-1}\deg^{\mathrm{tot}}(B)q^{De}.
\end{aligned}
\]
Thus one may take $C_q=(1-q^{-1})^{-1}$.  Replacing $\deg^{\mathrm{tot}}(B)$ by any larger $\Delta$ only enlarges the right-hand side.
\end{proof}

\begin{lemma}[Polynomial total-degree bound for the compactified length-two parameter space]\label{lem:length-two-degree}
Let $\barX$ be a projective $k$-scheme, let $\Ample$ be a very ample line bundle on $\barX$, and assume that
\[
        \mathcal H=\Hilb^2(\barX)
\]
is nonempty.  Put
\[
        V_d=H^0(\barX,\Ample^{\otimes d}).
\]
Let
\[
        p:\mathcal U\to\mathcal H,
        \qquad
        q:\mathcal U\to\barX
\]
be the universal family and the evaluation morphism.  For $d\ge1$, set
\[
        \mathcal E_d=p_*q^*\Ample^{\otimes d}.
\]
Then $\mathcal E_d$ is locally free of rank two, the natural map $V_d\otimes_k\OO_{\mathcal H}\to\mathcal E_d$ is surjective, and the projective bundle
\[
        B_d=\PP_{\mathcal H}(\mathcal E_d)
\]
of one-dimensional quotients, with projection $\pi_d:B_d\to\mathcal H$ and tautological quotient $\mathcal Q_d=\OO_{B_d}(1)$, satisfies
\[
        \dim B_d\le b_{\barX}:=\dim\mathcal H+1.
\]
Fix a very ample line bundle $\OO_{\mathcal H}(1)$ on $\mathcal H$ and put
\[
        \mathcal R_d=\mathcal Q_d\otimes \pi_d^*\OO_{\mathcal H}(1).
\]
Then $\mathcal R_d$ is very ample on $B_d$.  Moreover, for every fixed integer $0\le n\le b_{\barX}$ and every monomial $M$ of total degree $n$ in the two divisor classes
\[
        c_1(\mathcal Q_d),\qquad \pi_d^*c_1(\OO_{\mathcal H}(1)),
\]
the sum
\[
        \sum_{\substack{Z\in\operatorname{Irr}((B_d)_{\mathrm{red}})\\ \dim Z=n}}
        \left|\int_Z M\right|
\]
is bounded, for $d\gg0$, by a polynomial in $d$.  In particular, if
\[
        \deg^{\mathrm{tot}}_{\mathcal R_d}B_d
        =\sum_{Z\in\operatorname{Irr}((B_d)_{\mathrm{red}})}
        \int_Z c_1(\mathcal R_d)^{\dim Z},
\]
then there is a polynomial $P(d)$ such that
\[
        \deg^{\mathrm{tot}}_{\mathcal R_d}B_d\le P(d)
\]
for all $d\gg0$.
\end{lemma}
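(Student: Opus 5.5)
The plan is to prove the statements in order: the structural facts about $\mathcal E_d$ and $B_d$ first, and then the degree bound by expanding intersection numbers in Chern classes of $\mathcal E_d$.

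\emph{Local freeness and surjectivity.} The universal family $p:\mathcal U\to\mathcal H$ is finite and flat of degree two. Hence $p_*$ of any line bundle is locally free of rank two, by cohomology and base change, since the fibres are finite. Its fibre at $[\xi]$ is $H^0(\xi,\Ample^{\otimes d}|_\xi)$. Surjectivity of $V_d\otimes\OO_{\mathcal H}\to\mathcal E_d$ can be checked on fibres by Nakayama. There it is exactly the fact, proved in Lemma~\ref{lem:length-two-projection}, that linear forms of a projective embedding restrict surjectively to every closed length-two subscheme. This applies because $\Ample^{\otimes d}$ is very ample for $d\ge1$ and its complete linear system $V_d$ embeds $\barX$. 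The dimension bound $\dim B_d=\dim\mathcal H+1$ is immediate, since $B_d\to\mathcal H$ is a $\PP^1$-bundle.

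\emph{Very ampleness of $\mathcal R_d$.} The surjection gives a closed immersion $B_d\hookrightarrow \mathcal H\times\PP(V_d)$ over $\mathcal H$ under which $\mathcal Q_d$ is the pullback of $\OO_{\PP(V_d)}(1)$. Composing with the Segre embedding of $\mathcal H\times\PP(V_d)$, where $\mathcal H$ is embedded via $\OO_{\mathcal H}(1)$, shows that $\mathcal R_d$ is very ample.

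\emph{Polynomial bound.} Write $\zeta=c_1(\mathcal Q_d)$ and $h=\pi_d^*c_1(\OO_{\mathcal H}(1))$. The key point is that the irreducible components of $(B_d)_{\mathrm{red}}$ are exactly the preimages $\pi_d^{-1}(Y)$ of the irreducible components $Y$ of $\mathcal H_{\mathrm{red}}$, because $B_d$ is a $\PP^1$-bundle. This set of components is finite and independent of $d$, so it suffices to bound $\bigl|\int_{\pi^{-1}Y}\zeta^a h^b\bigr|$ for each fixed $Y$. On a $\PP^1$-bundle one has the relation
\[
\zeta^2=c_1(\mathcal E_d)\zeta-c_2(\mathcal E_d),
\]
with the sign convention for quotient bundles to be checked. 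Iterating it expresses $\zeta^a$ as $\alpha_a\zeta+\beta_a$, where $\alpha_a,\beta_a$ are polynomials in $c_1(\mathcal E_d)$ and $c_2(\mathcal E_d)$ pulled back from $\mathcal H$. Pushing forward with $\pi_*\zeta=1$ and $\pi_*1=0$ then reduces each integral to an intersection number on $Y$ of monomials in $c_1(\mathcal E_d)$, $c_2(\mathcal E_d)$ and $c_1(\OO_{\mathcal H}(1))$. It remains to see that $c_i(\mathcal E_d)$ are polynomial in $d$. By Grothendieck--Riemann--Roch, or more elementarily because $\mathcal E_d$ is the pushforward under a finite flat map of the $d$th power of a line bundle, one gets
\[
c_1(\mathcal E_d)=c_1(\mathcal E_0)+d\,p_*q^*c_1(\Ample),
\]
and $c_2(\mathcal E_d)$ is likewise quadratic in $d$. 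I would verify this via the norm and discriminant description, or via splitting on the double cover. Hence every intersection number is a polynomial in $d$, and the absolute values are bounded by a polynomial. The final statement then follows by expanding $c_1(\mathcal R_d)^{\dim Z}=(\zeta+h)^{\dim Z}$ binomially.

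The main obstacle is the Chern class computation on the possibly singular and nonreduced $\mathcal H$. There I would use operational Chern classes, or pass to a resolution (Chow envelope) of each component $Y$, so that the projective bundle formula and the polynomiality in $d$ are legitimate.
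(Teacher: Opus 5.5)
Your proposal is correct in outline but takes a different route from the paper at the one hard step, polynomiality in $d$ over a singular, possibly nonreduced base. Your structural steps agree with the paper: local freeness, surjectivity, the closed immersion $B_d\hookrightarrow\mathcal H\times\PP(V_d)$, the reduction to the components $\pi_d^{-1}(Y)$, and the relation $\zeta^2=c_1(\mathcal E_d)\zeta-c_2(\mathcal E_d)$, whose sign is right for the quotient convention.

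The paper works on each component $W$ of $\mathcal H_{\mathrm{red}}$ as follows:
\begin{itemize}
\item it passes to a de Jong alteration $g:\widetilde W\to W$ with $\widetilde W$ smooth projective;
\item it checks that $p$ is a finite lci morphism, locally $R[T]/(T^2-\beta T-\alpha)$;
\item it applies Fulton's singular Riemann--Roch to $\widetilde p$ and cancels $\td(T_{\widetilde W})$, so that each $\ch_i(g^*\mathcal E_d)$ is polynomial in $d$;
\item it descends to $W$ using $g_*[\widetilde W]=\delta[W]$.
\end{itemize}

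Your rank-two ``splitting on the double cover'' can bypass all of this, using only operational classes on $Y$ and on $\mathcal U_Y=p^{-1}(Y)$:
\begin{itemize}
\item The identity $\det p_*\mathcal L\cong\det p_*\OO_{\mathcal U}\otimes\mathrm{Nm}_p(\mathcal L)$ gives $c_1(\mathcal E_d)=c_1(\mathcal E_0)+d\,c_1(\mathrm{Nm}_p(q^*\Ample))$. This is the clean way to make sense of your term $d\,p_*q^*c_1(\Ample)$ on a singular base.
\item On $\mathcal U$ the counit gives an exact sequence $0\to K_d\to p^*\mathcal E_d\to q^*\Ample^{\otimes d}\to0$, and the kernel $K_d$ is a line bundle. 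With $a=c_1(q^*\Ample)$ this yields $p^*c_2(\mathcal E_d)=d\,a\,\bigl(p^*c_1(\mathcal E_d)-d\,a\bigr)$.
\item Since $p_*p^*[Y]=2[Y]$, the projection formula makes every intersection number on $Y$ half the corresponding number on $\mathcal U_Y$. Each is therefore polynomial in $d$.
\end{itemize}
You currently only assert that $c_2(\mathcal E_d)$ is quadratic in $d$, so this computation is the part you still have to write out.

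Your route gives a characteristic-free, elementary argument with explicit degrees in $d$: $c_1$ is linear and $c_2$ quadratic. The paper's argument is heavier, but it does not depend on the rank being two.

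Drop the fallback of passing to a resolution or a smooth Chow envelope. Over $\F_q$, resolution of singularities is not available in general. That is exactly why the paper uses alterations and pays the factor $1/\delta$.
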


\begin{proof}
The universal family $p:\mathcal U\to\mathcal H$ is finite flat of degree two.  Since $p$ is finite locally free of rank two and $q^*\Ample^{\otimes d}$ is invertible on $\mathcal U$,
\[
        \mathcal E_d=p_*q^*\Ample^{\otimes d}
\]
is locally free of rank two.  Since $\Ample^{\otimes d}$ is very ample for $d\ge1$, its global sections separate every length-two subscheme of $\barX$; equivalently,
\[
        V_d\otimes\OO_{\mathcal H}\longrightarrow \mathcal E_d
\]
is surjective.  The dimension bound follows from the fact that $B_d$ is a projective bundle of relative dimension one over $\mathcal H$.

The surjection $V_d\otimes\OO_{\mathcal H}\twoheadrightarrow\mathcal E_d$ realizes $B_d$ as a closed subscheme of $\mathcal H\times\PP(V_d)$, where both projective bundles are taken in the one-dimensional quotient convention.  Under this closed immersion, $\mathcal Q_d$ is the restriction of $\OO_{\PP(V_d)}(1)$ and $\pi_d^*\OO_{\mathcal H}(1)$ is the restriction of $\OO_{\mathcal H}(1)$.  Therefore $\mathcal R_d$ is the restriction of $\OO(1,1)$ and is very ample.

It remains to prove the polynomial cyclewise intersection bounds.  We first record why the relative Riemann--Roch formula used below applies after base change to an irreducible component of $\mathcal H_{\mathrm{red}}$.  Let $R$ be a local ring on an affine open of $\mathcal H$, and put $A_R=(p_*\OO_{\mathcal U})_R$.  This is a finite free $R$-algebra of rank two.  The unit $1\in A_R$ is unimodular: for every prime $\mathfrak p\subset R$, its image in $A_R\otimes_R\kappa(\mathfrak p)$ is the nonzero unit.  After shrinking the affine open, choose $t\in A_R$ such that $1,t$ is an $R$-basis.  Then
\[
        t^2=\alpha+\beta t
        \qquad(\alpha,\beta\in R),
\]
and the natural map
\[
        R[T]/(T^2-\beta T-\alpha)\longrightarrow A_R,
        \qquad T\longmapsto t,
\]
is an isomorphism, because both sides are free with bases $1,T$ and $1,t$, respectively.  The monic polynomial $T^2-\beta T-\alpha$ is a non-zero-divisor in $R[T]$.  Thus $p$ is locally the zero scheme of one non-zero-divisor in the affine line over the base, hence is a finite local complete intersection morphism, including in characteristic two.

Fix an irreducible component $W\subset\mathcal H_{\mathrm{red}}$.  We shall prove the required bounds for the corresponding component of $B_d$ over $W$.  By de Jong's alteration theorem there are an alteration
\[
        g:\widetilde W\longrightarrow W
\]
and a dense open immersion of $\widetilde W$ into a regular projective integral $k$-variety.  Since $W$ is projective and $g$ is proper, $\widetilde W$ is proper over $k$; the open immersion is therefore proper, hence also closed, and consequently is an isomorphism onto the integral compactification.  Thus $\widetilde W$ is regular, projective, and integral.  Because the finite field $k$ is perfect, $\widetilde W$ is smooth over $k$.  Write $\delta=[k(\widetilde W):k(W)]>0$ for the generic degree of $g$; see \cite[Theorem 4.1 and \S4.7]{deJong1996}.

Set
\[
        \widetilde{\mathcal U}=\mathcal U\times_{\mathcal H}\widetilde W,
        \qquad
        \widetilde p:\widetilde{\mathcal U}\to\widetilde W,
        \qquad
        \widetilde q:\widetilde{\mathcal U}\to\barX .
\]
Finite flat base change gives
\[
        g^*(\mathcal E_d|_W)\simeq \widetilde p_*\widetilde q^*\Ample^{\otimes d}.
\]
The morphism $\widetilde p$ is again finite flat of degree two and local complete intersection.  Since $\widetilde W$ is smooth over $k$, the composite $\widetilde{\mathcal U}\to\Spec k$ is local complete intersection, and in $K^0(\widetilde{\mathcal U})$ its virtual tangent bundle is
\[
        T_{\widetilde{\mathcal U}/k}
        =T_{\widetilde p}+\widetilde p^*T_{\widetilde W/k},
\]
where $T_{\widetilde p}$ is the virtual relative tangent bundle.  Since $\widetilde p$ is finite, its higher direct images vanish, so $\widetilde p_!=\widetilde p_*$ in coherent $K$-theory.  Apply the covariance and local-complete-intersection normalization of the singular Riemann--Roch transformation \cite[Theorem 18.3]{Fulton1998}.  On the smooth target $\widetilde W$ one has
\[
        \tau_{\widetilde W}(\mathcal F)
        =\ch(\mathcal F)\,\td(T_{\widetilde W})\cap[\widetilde W],
\]
while on $\widetilde{\mathcal U}$ one has
\[
        \tau_{\widetilde{\mathcal U}}(\widetilde q^*\Ample^{\otimes d})
        =\exp(d\widetilde q^*a)\,\td(T_{\widetilde p})\,
        \widetilde p^*\td(T_{\widetilde W})\cap[\widetilde{\mathcal U}],
        \qquad a=c_1(\Ample).
\]
Covariance and the projection formula give
\[
\begin{aligned}
        &\ch(g^*(\mathcal E_d|_W))\,\td(T_{\widetilde W})\cap[\widetilde W]\\
        &\qquad=
        \td(T_{\widetilde W})\cap
        \widetilde p_*
        \bigl(\exp(d\widetilde q^*a)\,\td(T_{\widetilde p})\cap[\widetilde{\mathcal U}]\bigr).
\end{aligned}
\]
The degree-zero term of $\td(T_{\widetilde W})$ is $1$, so cap product by this Todd class is an automorphism of $A_*(\widetilde W)_{\mathbb Q}$.  Cancelling it yields
\[
        \ch(g^*(\mathcal E_d|_W))\cap[\widetilde W]
        =
        \widetilde p_*
        \bigl(\exp(d\widetilde q^*a)\,\td(T_{\widetilde p})\cap[\widetilde{\mathcal U}]\bigr).
\]
In every fixed codimension, only finitely many powers of $\widetilde q^*a$ contribute.  Moreover, because $\widetilde W$ is smooth, cap product with $[\widetilde W]$ identifies $A^i(\widetilde W)_{\mathbb Q}$ with $A_{\dim\widetilde W-i}(\widetilde W)_{\mathbb Q}$.  Taking the codimension-$i$ component of the displayed equality therefore shows that each class
\[
        \ch_i(g^*(\mathcal E_d|_W))\in A^i(\widetilde W)_{\mathbb Q}
\]
is a polynomial in $d$ with coefficients in the fixed group $A^i(\widetilde W)_{\mathbb Q}$.  The Chern classes are universal polynomials with rational coefficients in the Chern-character components.  It follows that every product, of bounded total codimension, of Chern classes and of Segre classes of either $g^*(\mathcal E_d|_W)$ or its dual is polynomial in $d$ as a Chow class on $\widetilde W$.

Write
\[
        \xi=c_1(\mathcal Q_d),
        \qquad
        h=c_1(\OO_{\mathcal H}(1)).
\]
The irreducible components of $(B_d)_{\mathrm{red}}$ are the inverse images of the irreducible components of $\mathcal H_{\mathrm{red}}$.  Hence their number and dimensions are independent of $d$.  Let
\[
        C_d=\PP_W(\mathcal E_d|_W)
\]
be the corresponding component of $(B_d)_{\mathrm{red}}$.  We use the convention
\[
        s(\mathcal F)=c(\mathcal F)^{-1}.
\]
For the projective bundle of one-dimensional quotients $\pi:\PP(\mathcal F)\to W$, with $\xi=c_1(\OO_{\PP(\mathcal F)}(1))$, the projective bundle formula is
\[
        \pi_*(\xi^{\operatorname{rank}\mathcal F-1+j})=s_j(\mathcal F^\vee).
\]
Consequently, for a monomial $\xi^a\pi_d^*h^{n-a}$ of total degree $n=\dim C_d$,
\[
        \int_{C_d}\xi^a\pi_d^*h^{n-a}
        =
        \int_W h^{n-a}\,s_{a-1}((\mathcal E_d|_W)^\vee)\cap[W],
\]
with the convention $s_{-1}=0$.  By the projection formula and $g_*[\widetilde W]=\delta[W]$,
\[
\begin{aligned}
        &\int_W h^{n-a}\,s_{a-1}((\mathcal E_d|_W)^\vee)\cap[W] \\
        &\qquad=
        \frac1\delta
        \int_{\widetilde W}g^*h^{n-a}\,
        s_{a-1}((g^*(\mathcal E_d|_W))^\vee)\cap[\widetilde W].
\end{aligned}
\]
The right-hand side is a rational polynomial function of $d$ by the preceding paragraph.  Consequently each monomial intersection number on $C_d$ is a rational polynomial function of $d$, and its absolute value is bounded, for $d\gg0$, by a real polynomial in $d$.

Only finitely many components $W$ and finitely many monomials of total degree at most $b_{\barX}$ occur.  Thus the sum of the absolute values of the monomial intersection numbers is bounded by a single polynomial in $d$.  Finally,
\[
        c_1(\mathcal R_d)^n=(\xi+\pi_d^*h)^n.
\]
Expanding this binomial for each possible component dimension $n\le b_{\barX}$ and using the just-proved monomial bounds gives
\[
        \deg^{\mathrm{tot}}_{\mathcal R_d}B_d\le P(d)
\]
for some polynomial $P(d)$ and all sufficiently large $d$.
\end{proof}

\begin{lemma}[Polynomial bad-locus bound for compactified length-two projection]\label{lem:length-two-bad-locus}
Use the notation of Lemma~\ref{lem:length-two-degree}, put
\[
        M_d=\dim_k V_d,
        \qquad
        b=b_{\barX}.
\]
For $L>b$, let $\mathcal B_{d,L}\subset V_d^L$ be the set of ordered $L$-tuples $(s_1,\ldots,s_L)$ for which some geometric length-two subscheme of $\barX$ is not separated by the span of the $s_i$.  Then there are an integer $d_0$, independent of $L$, and a polynomial $P_0(d,L)$ such that, for all $d\ge d_0$ and all $L>b$,
\[
        \#\mathcal B_{d,L}(k)
        \le
        P_0(d,L)q^{LM_d-(L-b)}.
\]
\end{lemma}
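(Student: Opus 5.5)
The plan is to realize the bad set as the image of an incidence correspondence over the projective bundle $B_d$ of Lemma~\ref{lem:length-two-degree}, to bound the dimension and total degree of that image, and then to count its $k$-points with Lemma~\ref{lem:finite-field-degree-bound}.

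By the projection argument of Lemma~\ref{lem:length-two-projection}, the span of $s_1,\ldots,s_L$ fails to separate a geometric length-two subscheme $\xi$ exactly when some one-dimensional quotient of $\mathcal E_d\otimes\kappa(\xi)$ kills every $s_i$. Such a quotient is a geometric point $\beta\in B_d$. Composing $V_d\otimes\OO_{B_d}\twoheadrightarrow\pi_d^*\mathcal E_d\twoheadrightarrow\mathcal Q_d$, each $s\in V_d$ determines a section $\sigma(s)\in H^0(B_d,\mathcal Q_d)$ whose value at $\beta$ is a linear function of $s$, and this function is nonzero. Compactify $V_d^L$ as the affine chart $\{x_0\ne0\}$ of $\PP^{LM_d}_k$ with homogeneous coordinates $(x_0,s)$. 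Define
\[
        I\subset B_d\times\PP^{LM_d}_k
\]
as the common zero locus of the $L$ sections $\sigma(s_i)$, $1\le i\le L$. Each of these is a section of $\mathcal Q_d\boxtimes\OO(1)$, bilinear in $\beta$ and $s$. Then $\mathcal B_{d,L}$ is the set of $k$-points of the affine part of $\mathrm{pr}_2(I)$, so
\[
        \#\mathcal B_{d,L}(k)\le\#\,\mathrm{pr}_2(I)(k).
\]

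For the dimension, note that over each point $\beta$ the fibre of $I$ is a linear subspace of $\PP^{LM_d}$ cut out by $L$ independent linear conditions. Hence
\[
        \dim I\le b+LM_d-L,
        \qquad
        \dim\mathrm{pr}_2(I)\le LM_d-(L-b).
\]
Lemma~\ref{lem:finite-field-degree-bound} with $e=1$ and $D=LM_d-(L-b)$ then gives the claimed bound, with $P_0=C_q\Delta$, as soon as the total degree of $\mathrm{pr}_2(I)\subset\PP^{LM_d}$ is at most some $\Delta$ that is polynomial in $d$ and $L$. The integer $d_0$ comes only from Lemma~\ref{lem:length-two-degree}, so it is independent of $L$.

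For the degree, work with the very ample bundle $\mathcal N=\mathcal R_d\boxtimes\OO(1)$ on $B_d\times\PP^{LM_d}$. I would argue in three steps.
\begin{romanlist}
\item The total $\mathcal N$-degree of $B_d\times\PP^{LM_d}$ is a sum over components of binomial expressions. These are $\binom{n+LM_d}{n}$ with $n\le b$, multiplied by the $\mathcal R_d$-degrees of the components of $B_d$. That product is bounded by $(LM_d+b)^b\,P(d)$, using Lemma~\ref{lem:length-two-degree}. Since $M_d$ is polynomial in $d$, this is polynomial in $(d,L)$.
\item Cut successively by the $L$ divisors of the globally generated bundle $\mathcal Q_d\boxtimes\OO(1)$. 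Since $\mathcal N=(\mathcal Q_d\boxtimes\OO(1))\otimes\pi_d^*\OO_{\mathcal H}(1)$ with the last factor globally generated, a refined B\'ezout inequality in Fulton's form (Example~12.3.1 of \cite{Fulton1998}) should give a factor $1$ per divisor. Then the total $\mathcal N$-degree of $I$ is at most that of the ambient space.
\item For the projection, each component $Z'$ of $\mathrm{pr}_2(I)$ is dominated by a component $Z$ of $I$ of the same dimension. Then $\deg Z'\le\int_Z\mathrm{pr}_2^*c_1(\OO(1))^{\dim Z}\le\int_Z c_1(\mathcal N)^{\dim Z}$. The last inequality holds because $\mathcal N$ minus the pulled-back class is nef, so monotonicity of intersection numbers of nef classes applies.
\end{romanlist}

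The main obstacle is step (ii). Naive B\'ezout would give a factor $2^L$, since the cutting divisors have bidegree $(1,1)$, and an exponential factor in $L$ is not allowed. One must use that each divisor lies in a class dominated by $\mathcal N$ itself, that is, has ``degree one'' relative to $\mathcal N$. One must also check that the refined B\'ezout bound counts every component, including embedded-dimension excess components, with $\mathcal N$-degree at most the product. If the cited form does not directly allow divisors merely dominated by $\mathcal N$, I would try first to replace each $\sigma(s_i)$ by $\sigma(s_i)\cdot t$ for a general section $t$ of $\pi_d^*\OO_{\mathcal H}(1)$. This makes each divisor a genuine $\mathcal N$-divisor, containing the original one, at the cost of extra components lying over $\{t=0\}$. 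These can then be discarded, or else handled by intersecting over two general choices of $t$.
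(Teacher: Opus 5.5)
Your setup is sound: the incidence $I$ (the projective completion $\PP(\OO\oplus\mathcal K_d^{\oplus L})$ of the bundle the paper projectivizes), the dimension count, step (i), and the final appeal to Lemma~\ref{lem:finite-field-degree-bound} are all fine. Step (iii), however, has a genuine gap. You assert that each component $Z'$ of $\mathrm{pr}_2(I)$ is dominated by a component $Z$ of $I$ of the same dimension, and this is not justified. A proper surjection can have positive-dimensional generic fibre over $Z'$ on every component of $I$ mapping onto it. In that case $\int_Z\mathrm{pr}_2^*c_1(\OO(1))^{\dim Z}=0$, so your first inequality $\deg Z'\le\int_Z\mathrm{pr}_2^*c_1(\OO(1))^{\dim Z}$ is false. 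Generic finiteness might be provable here for $d\gg0$ from higher very ampleness of $\Ample^{\otimes d}$, but you give no argument, and none is needed.

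The fix is the device the paper uses. Put $c=\dim Z'$ and $e=\dim Z-c$. The generic fibre of $Z\to Z'$ has positive $\mathcal N$-degree $\alpha$, so $\mathrm{pr}_{2*}(c_1(\mathcal N)^e\cap[Z])=\alpha[Z']$. The projection formula then gives $\deg Z'\le\int_Z\mathrm{pr}_2^*c_1(\OO(1))^c\,c_1(\mathcal N)^e$. Only after this does your nef-monotonicity argument yield the bound $\int_Zc_1(\mathcal N)^{\dim Z}$.

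Step (ii) as stated rests on a citation you are unsure of, but your fallback already closes it, with no discarding and no second choice of $t$ needed.
\begin{romanlist}
\item Choose $t$ over $\bar k$ not vanishing identically on any irreducible component of $\mathcal H$. You need $\bar k$ because over a finite field the sections vanishing on some component can exhaust $H^0$.
\item The $\sigma(s_i)t$ are then hyperplane sections of the $|\mathcal N|$-embedding, with common zero set $I\cup\{t=0\}$.
\item Each component of $I$ is a projective bundle over a component of $B_d$. It is therefore not contained in $\{t=0\}$, and so it is an irreducible component of that union.
\item Fulton's refined B\'ezout theorem, applied to each $C\times\PP^{LM_d}$ and the $L$ hyperplanes, bounds $\deg^{\mathrm{tot}}_{\mathcal N}I$ by $\deg^{\mathrm{tot}}_{\mathcal N}(B_d\times\PP^{LM_d})\le(LM_d+b)^bP(d)$. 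Total degree is invariant under passage to $\bar k$, as in the proof of Lemma~\ref{lem:finite-field-degree-bound}.
\end{romanlist}

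With these repairs your route is valid, and it differs from the paper's. The paper never invokes B\'ezout. It recognizes its incidence variety as a projective bundle over $B_d$ and computes $\int_J\zeta^{R-1+n-a}(\rho^*r)^a=\binom{L}{n-a}\int_C x^{n-a}r^a$ exactly, via the projective bundle formula and $s(\mathcal K_d^{\oplus L})=(1+x)^L$. This reduces everything to the monomial bounds of Lemma~\ref{lem:length-two-degree}. You trade that Segre-class computation for refined B\'ezout plus nef monotonicity, which gives a cruder but still polynomial bound resting on the same $P(d)$.
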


\begin{proof}
Let $\pi_d:B_d\to\mathcal H$ be as in Lemma~\ref{lem:length-two-degree}.  Pulling the universal quotient to $B_d$ gives
\[
        V_d\otimes_k\OO_{B_d}\twoheadrightarrow\mathcal Q_d,
\]
and let $\mathcal K_d$ be its kernel.  Thus $\mathcal K_d$ is locally free of rank $M_d-1$.  Since $\mathcal H$ is nonempty and $V_d\otimes_k\OO_{\mathcal H}\twoheadrightarrow\mathcal E_d$ is a quotient of rank two, one has $M_d\ge2$.  Hence $\mathcal K_d$ and all the projective bundles used below have positive rank.

We first spell out the length-two quotient criterion used below.  Let $\Omega/k$ be algebraically closed and let $\xi\subset \barX_\Omega$ be a length-two subscheme.  Since $V_d$ separates every length-two subscheme of $\barX$, the map
\[
        V_d\otimes_k\Omega\longrightarrow H^0(\xi,\Ample^{\otimes d}|_\xi)
\]
is surjective.  An ordered tuple $(s_1,\ldots,s_L)$ fails to separate $\xi$ precisely when the span of the images of the $s_i$ is a proper subspace of this two-dimensional target.  This is equivalent to the existence of a one-dimensional quotient of $H^0(\xi,\Ample^{\otimes d}|_\xi)$ on which all $s_i$ vanish.  Such a quotient is exactly a geometric point of $B_d$ over $\xi$, and the condition that all $s_i$ vanish in it says that the tuple lies in the fibre of $\mathcal K_d^{\oplus L}$ at that point.

We use $\PP(\mathcal F)$ for the projective bundle of one-dimensional quotients of a vector bundle $\mathcal F$, and write
\[
        \PP_{\mathrm{lin}}(E)=\PP(E^\vee)
\]
for the projective space of one-dimensional subspaces of a vector space $E$.  Put
\[
        I_{d,L}=\PP_{B_d}\bigl((\mathcal K_d^{\oplus L})^\vee\bigr).
\]
It parametrizes a point of $B_d$ together with a one-dimensional subspace of the fibre of $\mathcal K_d^{\oplus L}$, that is, a nonzero ordered $L$-tuple of sections, up to common scalar, whose entries all vanish in the corresponding one-dimensional quotient of a length-two jet.  The inclusion $\mathcal K_d^{\oplus L}\subset V_d^{\oplus L}\otimes\OO_{B_d}$ induces a projective morphism
\[
        f:I_{d,L}\longrightarrow \PP_{\mathrm{lin}}(V_d^{\oplus L}).
\]
Let $Y_{d,L}$ be the scheme-theoretic image.  Let $v\in\mathcal B_{d,L}(k)$ be nonzero, and let $[v]\in\PP_{\mathrm{lin}}(V_d^{\oplus L})(k)$ be its projectivization.  The quotient criterion over an algebraic closure shows that the base change of $[v]$ lies in $Y_{d,L,\bar k}$.  Since $Y_{d,L}$ is a closed $k$-subscheme of $\PP_{\mathrm{lin}}(V_d^{\oplus L})$, faithful flatness of $\bar k/k$ implies that $[v]$ itself factors through $Y_{d,L}$.  Hence the projectivization of the nonzero part of $\mathcal B_{d,L}(k)$ is contained in $Y_{d,L}(k)$.

Since $\dim B_d\le b$ and the fibres of $I_{d,L}\to B_d$ have dimension $L(M_d-1)-1$,
\[
        \dim Y_{d,L}
        \le
        \dim I_{d,L}
        \le
        b+L(M_d-1)-1
        =LM_d-1-(L-b).
\]

It remains to bound the total degree of $Y_{d,L}$ polynomially in $d$ and $L$.  Put
\[
        E_{d,L}=\mathcal K_d^{\oplus L},
        \qquad
        R=\operatorname{rank}E_{d,L}=L(M_d-1).
\]
Let
\[
        \rho:I_{d,L}\to B_d
\]
be the projection, and let
\[
        \zeta=c_1(\OO_{I_{d,L}}(1)).
\]
Thus $\zeta=f^*H$, where $H=c_1(\OO_{\PP_{\rm lin}(V_d^{\oplus L})}(1))$.  Put
\[
        r=c_1(\mathcal R_d),
        \qquad
        A=\OO_{I_{d,L}}(1)\otimes \rho^*\mathcal R_d.
\]
The morphism $I_{d,L}\to B_d\times\PP_{\mathrm{lin}}(V_d^{\oplus L})$ is a closed immersion, and $A$ is the restriction of the very ample line bundle $\mathcal R_d\boxtimes\OO(1)$.  Hence $A$ is very ample and
\[
        c_1(A)=\zeta+\rho^*r.
\]

Let $Z$ be an irreducible component of $(Y_{d,L})_{\mathrm{red}}$ of dimension $c$.  Choose an irreducible component $J$ of $(I_{d,L})_{\mathrm{red}}$ whose image closure is $Z$.  The irreducible components of $(I_{d,L})_{\mathrm{red}}$ are precisely the projective bundles over the irreducible components of $(B_d)_{\mathrm{red}}$.  Thus there is a unique irreducible component $C$ of $(B_d)_{\mathrm{red}}$ such that, with $n=\dim C\le b$,
\[
        J=\PP_C\bigl((E_{d,L}|_C)^\vee\bigr),
        \qquad
        \dim J=R-1+n.
\]
The scheme $J$ is integral.  Since the topological image of $J$ is contained in the reduced closed subscheme $Z$, the restriction $f|_J$ factors uniquely through a dominant projective morphism
\[
        g:J\longrightarrow Z.
\]
For every geometric point $x\in C$, the restriction of $f$ to the fibre
\[
        \rho^{-1}(x)\simeq \PP^{R-1}
\]
is the linear closed immersion induced by
\[
        E_{d,L,x}\subset V_d^{\oplus L}\otimes_k\kappa(x).
\]
Because this fibre maps into $Z$, we have $c=\dim Z\ge R-1$.  Hence
\[
        e:=\dim J-\dim Z=R-1+n-c
\]
satisfies
\[
        0\le e\le n\le b.
\]

The generic fibre $J_{\eta_Z}$ has dimension $e$.  Since $A$ is very ample on $J$, its restriction to $J_{\eta_Z}$ is ample, and
\[
        \alpha:=\int_{J_{\eta_Z}}c_1(A)^e
\]
is a positive integer.  The restriction of $c_1(A)^e\cap[J]$ to the generic fibre of $g$ is a zero-cycle of degree $\alpha$ over $k(Z)$.  By the definition of proper pushforward, the coefficient at the generic point of $Z$ in
\[
        g_*\bigl(c_1(A)^e\cap[J]\bigr)
\]
is therefore $\alpha$.  The difference between this class and $\alpha[Z]$ is supported on a proper closed subset of the integral $c$-dimensional scheme $Z$, which has dimension smaller than $c$ and contributes nothing to $A_c(Z)$.  Hence
\[
        g_*\bigl(c_1(A)^e\cap[J]\bigr)=\alpha[Z]
        \qquad\text{in }A_c(Z).
\]
By the projection formula,
\[
\begin{aligned}
        \int_J \zeta^c\,c_1(A)^e
        &=
        \int_J g^*H^c\,c_1(A)^e                                      \\
        &=
        \int_Z H^c\cap g_*\bigl(c_1(A)^e\cap[J]\bigr)                \\
        &=
        \alpha\int_Z H^c
        =\alpha\deg Z
        \ge \deg Z.
\end{aligned}
\]
Consequently,
\[
        \deg Z
        \le
        \int_J \zeta^c(\zeta+\rho^*r)^e
        \le
        \sum_{a=0}^{e}\binom ea
        \left|
        \int_J
        \zeta^{R-1+n-a}(\rho^*r)^a
        \right|,
\]
where we used $c+e=\dim J=R-1+n$.

We retain the convention $s(\mathcal F)=c(\mathcal F)^{-1}$ from Lemma~\ref{lem:length-two-degree}.  Since
\[
        J=\PP_C((E_{d,L}|_C)^\vee)
\]
is a projective bundle of one-dimensional quotients, the projective bundle formula gives, for $0\le a\le e\le n$,
\[
        \rho_*\bigl(\zeta^{R-1+n-a}\cap[J]\bigr)
        =s_{n-a}(E_{d,L}|_C).
\]
Now
\[
        0\to \mathcal K_d
        \to V_d\otimes_k\OO_{B_d}
        \to \mathcal Q_d
        \to0.
\]
Let
\[
        x=c_1(\mathcal Q_d).
\]
The exact sequence gives, in the Chow ring of $B_d$,
\[
        c(\mathcal K_d)=(1+x)^{-1},
        \qquad
        c(E_{d,L})=(1+x)^{-L},
        \qquad
        s(E_{d,L})=(1+x)^L,
\]
where the inverse is expanded in the graded Chow ring and therefore truncates in finite degree.
As $n-a\le b<L$, it follows that
\[
        s_{n-a}(E_{d,L}|_C)=\binom{L}{n-a}x^{n-a}|_C.
\]
Therefore
\[
        \int_J
        \zeta^{R-1+n-a}(\rho^*r)^a
        =
        \binom{L}{n-a}\int_C x^{n-a}r^a.
\]
Also
\[
        r=x+\pi_d^*c_1(\OO_{\mathcal H}(1)).
\]
After expanding $r^a$, the last integral is a finite integral linear combination of intersection numbers on $C$ of monomials of total degree $n$ in
\[
        c_1(\mathcal Q_d),
        \qquad
        \pi_d^*c_1(\OO_{\mathcal H}(1)),
\]
and its coefficients are polynomials in $L$ of degree at most $b$.  By Lemma~\ref{lem:length-two-degree}, the sum of the absolute values of these monomial intersection numbers over all irreducible components $C$ of $(B_d)_{\mathrm{red}}$ is bounded, for $d\gg0$, by a polynomial in $d$.

The number of irreducible components of $(I_{d,L})_{\mathrm{red}}$ is the number of irreducible components of $(B_d)_{\mathrm{red}}$ and is independent of $d$ and $L$.  For each irreducible component $Z$ of $(Y_{d,L})_{\mathrm{red}}$, choose one component $J$ whose image closure is $Z$.  This choice is injective as a map from the set of such $Z$ to the set of components $J$, because a fixed irreducible $J$ has only one image closure.  Since $0\le e\le b$, the binomial coefficients $\binom ea$ are bounded by constants depending only on $b$.  Summing the preceding estimates therefore gives a polynomial $P_1(d,L)$.  After choosing an integer $d_1$, independent of $L$, large enough for Lemma~\ref{lem:length-two-degree}, one has
\[
        \deg^{\mathrm{tot}}Y_{d,L}\le P_1(d,L)
\]
for all $d\ge d_1$ and all $L>b$.

Set
\[
        D_{d,L}=LM_d-1-(L-b).
\]
By Lemma~\ref{lem:finite-field-degree-bound}, applied to $Y_{d,L}\subset\PP_{\mathrm{lin}}(V_d^{\oplus L})$ with its total degree with respect to $\OO(1)$,
\[
        \#Y_{d,L}(k)
        \le C_qP_1(d,L)q^{D_{d,L}}.
\]
The affine cone over $Y_{d,L}$ has exactly $1+(q-1)\#Y_{d,L}(k)$ $k$-points.  Since $\mathcal B_{d,L}(k)$ is contained in this cone, after enlarging the polynomial once more and taking $d_0=d_1$, we obtain
\[
        \#\mathcal B_{d,L}(k)
        \le
        P_0(d,L)q^{D_{d,L}+1}
        =P_0(d,L)q^{LM_d-(L-b)},
\]
as required.
\end{proof}

\begin{lemma}[Linear-dependence count after finite scalar extension]\label{lem:extension-dependence-count}
Let $V$ be an $M$-dimensional $k$-vector space, let $k'/k$ be a finite extension, and let
\[
        \rho:V\otimes_k k'\longrightarrow H
\]
be a $k'$-linear map with image dimension $c$ over $k'$.  Fix $L\le c$.  Among all ordered tuples $(s_1,\ldots,s_L)\in V^L$, the number for which
\[
        \rho(s_1),\ldots,\rho(s_L)
\]
are $k'$-linearly dependent is at most
\[
        Lq^{-(c-L+1)}q^{ML}.
\]
\end{lemma}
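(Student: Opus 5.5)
We argue by a union bound over the first index at which the dependence appears.  If $\rho(s_1),\ldots,\rho(s_L)$ are $k'$-linearly dependent, then there is a smallest index $i\in\{1,\ldots,L\}$ with
\[
        \rho(s_i)\in \Span_{k'}\bigl(\rho(s_1),\ldots,\rho(s_{i-1})\bigr)=:U_{i-1}.
\]
Here $U_{i-1}\subset \im\rho$ is a $k'$-subspace of dimension at most $i-1$.  The plan is therefore to fix $i$ and the preceding entries $s_1,\ldots,s_{i-1}$, count the admissible $s_i\in V$, let the remaining entries $s_{i+1},\ldots,s_L$ be arbitrary, and then sum over $i$.

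The key step is to count vectors $s\in V$ with $\rho(s)\in U$, where $U\subset\im\rho$ is a fixed $k'$-subspace of $k'$-dimension $u\le i-1$.  Consider the $k$-linear composite
\[
        \bar\rho:V\hookrightarrow V\otimes_kk'\xrightarrow{\ \rho\ }\im\rho\to \im\rho/U .
\]
The set of admissible $s$ is exactly $\ker\bar\rho$, a $k$-subspace of $V$.  To bound it we need the $k$-dimension of the image of $\bar\rho$.  Since $V$ spans $V\otimes_kk'$ over $k'$, the $k'$-span of $\bar\rho(V)$ is all of $\im\rho/U$, which has $k'$-dimension at least $c-u$.  A $k$-subspace whose $k'$-span has $k'$-dimension $c-u$ must have $k$-dimension at least $c-u$, because $k$-linearly dependent vectors cannot $k'$-span more than their number.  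Hence $\dim_k\bar\rho(V)\ge c-u\ge c-i+1$, so
\[
        \#\ker\bar\rho\le q^{M-(c-i+1)}.
\]
This is the step needing care: one must not claim the stronger bound $q^{[k':k](c-u)}$, which is false in general.  The weaker $k$-dimension bound is exactly what the statement requires.

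Summing over $i$ gives at most
\[
        \sum_{i=1}^{L} q^{M(i-1)}\cdot q^{M-(c-i+1)}\cdot q^{M(L-i)}
        =\sum_{i=1}^L q^{ML}q^{-(c-i+1)}
        \le L\,q^{-(c-L+1)}q^{ML},
\]
using $c-i+1\ge c-L+1$, where the hypothesis $L\le c$ keeps this exponent positive.  The only point requiring attention is the $k$-versus-$k'$ dimension comparison in the middle paragraph; the rest is bookkeeping.
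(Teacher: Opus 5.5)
Your proposal is correct and is essentially the paper's proof: a union bound over the first position at which dependence appears. In both, the admissible next entries form a $k$-subspace of $V$ of codimension at least $c-t$, because the image of $V$ in $\im\rho/S_t$ has $k'$-span of dimension $c-t$, and the final sum is the same.
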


\begin{proof}
Count according to the first position at which dependence appears.  Suppose that the first $t$ images are $k'$-linearly independent, with $0\le t\le L-1$, and let $S_t$ be their $k'$-span.  The next image is dependent precisely when
\[
        \rho(s_{t+1})\in S_t.
\]
The set of $v\in V$ satisfying this condition is a $k$-linear subspace.  After quotienting $\im\rho$ by $S_t$, the $k'$-span of the image of $V$ has dimension $c-t$, so the corresponding $k$-linear map has image of $k$-dimension at least $c-t$.  Therefore the subspace has codimension at least $c-t$ in $V$.  For fixed first $t$ entries, there are at most $q^{M-(c-t)}$ choices for $s_{t+1}$ and arbitrary choices for the remaining entries.  The number of tuples whose first dependence occurs at position $t+1$ is at most
\[
        q^{Mt}q^{M-(c-t)}q^{M(L-t-1)}=q^{ML-(c-t)}.
\]
Summing over $t=0,\ldots,L-1$ gives
\[
        \sum_{t=0}^{L-1}q^{ML-(c-t)}
        \le Lq^{ML-(c-L+1)}.
\]
\end{proof}

\begin{lemma}[Finite disjoint unions of closed immersions]\label{lem:finite-disjoint-closed-immersion}
Let $Z=\coprod_{\alpha=1}^n Z_\alpha$ be a finite disjoint union of finite $k$-schemes.  Suppose that $f_\alpha:Z_\alpha\to\PP^r_k$ is a closed immersion for every $\alpha$, and that the supports of the scheme-theoretic images $f_\alpha(Z_\alpha)$ are pairwise disjoint.  Then the induced morphism
\[
        f:Z\longrightarrow\PP^r_k
\]
is a closed immersion.
\end{lemma}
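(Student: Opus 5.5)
Since $Z$ is finite over $k$ and $\PP^r_k$ is separated, $f$ is proper. It therefore suffices to show that $f$ is a monomorphism, or more directly, to check that $f$ is a closed immersion locally on the target. I will use the second route because it is the most concrete.

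\emph{Topological step.} The scheme-theoretic image $f_\alpha(Z_\alpha)$ is a finite closed subscheme $T_\alpha\subset\PP^r_k$. Its support is a finite set of closed points, and by hypothesis the supports of the $T_\alpha$ are pairwise disjoint. Any finite set of closed points of $\PP^r_k$ lies in a standard affine open: over a finite field one cannot always pick a coordinate hyperplane, but one can take a hypersurface of large degree avoiding the finitely many points, using Lemma~\ref{lem:interpolation-projective} with $\Ample=\OO(1)$. So there is an affine open $U=\Spec R\subset\PP^r_k$ containing all the supports. Then $f^{-1}(U)=Z$, because each $f_\alpha$ lands set-theoretically in $T_\alpha\subset U$. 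Hence it suffices to show that $\Gamma(Z)\leftarrow R$ is surjective.

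\emph{Algebraic step.} We have $\Gamma(Z,\OO_Z)=\prod_\alpha A_\alpha$ with $A_\alpha=\Gamma(Z_\alpha)$, and each map $R\to A_\alpha$ is surjective with kernel $I_\alpha$, the ideal of $T_\alpha$. The ideals $I_\alpha$ are pairwise comaximal. Indeed, $V(I_\alpha+I_\beta)$ lies in the intersection of the supports, which is empty, so $I_\alpha+I_\beta=R$. The Chinese remainder theorem then gives that $R\to\prod_\alpha R/I_\alpha=\prod_\alpha A_\alpha$ is surjective. Therefore $f$ is a closed immersion onto $V(\bigcap_\alpha I_\alpha)$.

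The only point needing care is the existence of the common affine open, that is, a hypersurface avoiding finitely many closed points of possibly large degree. Interpolation on the reduced finite union of points handles this: take a degree-$d$ form that is a unit at each point. Alternatively, one can avoid this step altogether. Work on the open cover by standard affines and, on each $D_+(x_i)$, apply the same Chinese remainder argument to those components whose images meet $D_+(x_i)$. The preimage of $D_+(x_i)$ is still the disjoint union of the corresponding open pieces of the $Z_\alpha$, and each such piece is closed in $D_+(x_i)\cap T_\alpha$.
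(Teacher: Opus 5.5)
Your proof is correct and is essentially the paper's argument. The paper simply asserts that, because the supports are disjoint, the scheme-theoretic union of the images has structure sheaf equal to the product of the individual structure sheaves; your comaximality-plus-Chinese-remainder computation on an affine chart is precisely the verification of that assertion. The common affine open obtained by interpolation is a harmless detour, since the statement is local on $\PP^r_k$, as your alternative using the standard affine cover already shows.
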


\begin{proof}
Each image $f_\alpha(Z_\alpha)$ is a finite closed subscheme of $\PP^r_k$.  Since their supports are pairwise disjoint, their scheme-theoretic union has structure sheaf equal to the product of the individual structure sheaves.  Thus this union is canonically isomorphic to $\coprod_\alpha Z_\alpha$.  The morphism $f$ identifies $Z$ with this closed subscheme of $\PP^r_k$, so $f$ is a closed immersion.
\end{proof}

\begin{lemma}[Closed immersions separate length-two subschemes]\label{lem:closed-immersion-separates-length-two}
Let $Y$ be a finite type $k$-scheme, let $\mathcal L$ be a line bundle, and let $W\subset H^0(Y,\mathcal L)$ be a basepoint-free finite-dimensional linear system.  If the morphism $\phi_W:Y\to\PP(W)$ is a locally closed embedding, then for every algebraically closed extension $\Omega/k$ and every closed subscheme $\xi\subset Y_\Omega$ of length at most two, the restriction map
\[
        W_\Omega\to H^0(\xi,\mathcal L_\Omega|_\xi)
\]
is surjective.
\end{lemma}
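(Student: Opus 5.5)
We reduce the statement to the fact that linear forms on projective space restrict surjectively to every finite closed subscheme of length at most two, which was already checked in the proof of Lemma~\ref{lem:length-two-projection}. Fix an algebraically closed $\Omega/k$ and a closed subscheme $\xi\subset Y_\Omega$ of length at most two. The base change $\phi_{W,\Omega}:Y_\Omega\to\PP(W_\Omega)$ is again a locally closed embedding, because immersions are stable under base change. We also have $\phi_{W,\Omega}^*\OO(1)\simeq\mathcal L_\Omega$, and under this isomorphism the pullbacks of the coordinate linear forms are exactly the elements of $W_\Omega$. So it suffices to show that $H^0(\PP(W_\Omega),\OO(1))=W_\Omega$ surjects onto $H^0(\xi,\OO(1)|_\xi)$, where $\xi$ is viewed through the composite $\xi\to Y_\Omega\to\PP(W_\Omega)$.

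The first step is to check that this composite is a closed immersion. It is a composite of a closed immersion and a locally closed immersion, hence a locally closed immersion. Since $\xi$ is finite over $\Omega$ and $\PP(W_\Omega)$ is separated, the composite is also proper. A proper immersion is a closed immersion, exactly as in the proof of Lemma~\ref{lem:interpolation-quasi}. Thus $\xi$ becomes a closed subscheme of $\PP(W_\Omega)$ of the same length, and the restriction of $\OO(1)$ to it agrees with $\mathcal L_\Omega|_\xi$.

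The second step is the surjectivity itself. If $\xi$ is empty there is nothing to prove. If $\xi$ has length one, pick any linear form not vanishing at its point; basepoint-freeness gives one. If $\xi$ has length two, choose a linear form $\ell_0$ nonvanishing on the (at most two) support points; this is possible because $\Omega$ is infinite and each support point imposes a proper hyperplane of forms. Then $\xi$ lies in the affine chart $D_+(\ell_0)$. The functions $\ell/\ell_0$ generate $H^0(\xi,\OO_\xi)$ as an $\Omega$-algebra, since $\xi\hookrightarrow D_+(\ell_0)$ is a closed immersion. This algebra is two-dimensional, so $1$ together with one function $\ell/\ell_0$ already spans it as a vector space. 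Trivializing $\OO(1)|_\xi$ by $\ell_0$, we conclude that the linear forms $\ell_0$ and $\ell$ span $H^0(\xi,\OO(1)|_\xi)$.

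The only real subtlety is the first step: making sure the locally closed immersion restricts to a genuine closed immersion on $\xi$, so that the two-dimensional algebra statement applies. Properness of finite schemes settles this. We also need the identification of the pullbacks of linear forms with $W_\Omega$ to be compatible with base change to $\Omega$, and that holds by construction of $\phi_W$.
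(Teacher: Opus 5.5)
Your proposal is correct and follows essentially the same route as the paper's proof: base-change the immersion, use properness of the finite scheme $\xi$ to upgrade the composite $\xi\to\PP(W_\Omega)$ to a closed immersion, and then apply the affine-chart argument to show that linear forms restrict surjectively onto any length-two subscheme of projective space. One small point is worth stating explicitly: some $\ell/\ell_0$ must fail to be a scalar, since otherwise these functions would generate only $\Omega$, and this is what lets $1$ together with that single function span the two-dimensional algebra.
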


\begin{proof}
After base change to $\Omega$, the morphism
\[
        \phi_{W,\Omega}:Y_\Omega\to\PP(W_\Omega)
\]
is still a locally closed immersion.  Let $\xi\subset Y_\Omega$ be a closed subscheme of length at most two.  The composite
\[
        \xi\longrightarrow Y_\Omega\longrightarrow\PP(W_\Omega)
\]
is a locally closed immersion.  Since $\xi$ is finite over $\Omega$, it is proper over $\Omega$; because projective space is separated over $\Omega$, the composite is proper.  A proper locally closed immersion is a closed immersion.  Thus $\xi$ is identified with a closed subscheme of $\PP(W_\Omega)$, and $\mathcal L_\Omega|_\xi$ is the pullback of $\OO_{\PP(W_\Omega)}(1)$.

It remains to prove that linear forms on projective space restrict onto every closed subscheme of length at most two.  Let $\eta\subset\PP^n_\Omega$ be such a subscheme.  The cases of length zero and one are immediate.  Suppose that $\eta$ has length two.  Choose a linear form $\ell_0$ that is nonzero at every point of the support of $\eta$.  The closed immersion $\eta\hookrightarrow D_+(\ell_0)\simeq\mathbb A^n_\Omega$ implies that the affine linear functions $\ell/\ell_0$ generate $H^0(\eta,\OO_\eta)$ as an $\Omega$-algebra.  Since this algebra has $\Omega$-dimension two, the unit and one such affine linear function span it.  Trivializing $\OO(1)|_\eta$ by $\ell_0$ therefore gives a surjection
\[
        H^0(\PP^n_\Omega,\OO(1))\twoheadrightarrow H^0(\eta,\OO(1)|_\eta).
\]
Equivalently, the scheme-theoretic projective span of $\eta$ is a line.  This proves the required surjectivity for $\xi$.
\end{proof}

\begin{lemma}[Compactified two-block embedding]\label{lem:two-block-separation}
Let $Y$ be a projective $k$-scheme, let $S\subset |Y|$ be a finite set contained in the smooth locus of $Y$, and put
\[
        Z=\coprod_{P\in S}\Spec(\OO_{Y,P}/\mm_P^2).
\]
Let $u_1,\ldots,u_A,w_1,\ldots,w_B$ be global sections of a line bundle $\mathcal L$ on $Y$.  Assume:
\begin{alphlist}
\item the $w$-block has no common zero on $Y\setminus S$ and defines a locally closed embedding $Y\setminus S\to\PP^{B-1}_k$;
\item each $w_j$ vanishes on $Z$;
\item the $u$-block defines a closed immersion $Z\to\PP^{A-1}_k$;
\item the full collection $u_1,\ldots,u_A,w_1,\ldots,w_B$ has no common zero on $Y$.
\end{alphlist}
Then the full collection defines a closed immersion
\[
        Y\hookrightarrow\PP^{A+B-1}_k .
\]
\end{lemma}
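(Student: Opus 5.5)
The plan is to verify the hypothesis of Lemma~\ref{lem:linear-system-criterion} for the linear system $W=\Span(u_1,\ldots,u_A,w_1,\ldots,w_B)\subset H^0(Y,\mathcal L)$. By (d), $W$ is basepoint-free, so the morphism $\phi_W:Y\to\PP^{A+B-1}_k$ is defined. Since $Y$ is projective, it suffices to show that for every algebraically closed $\Omega/k$ and every length-two closed subscheme $\xi\subset Y_\Omega$, the restriction $W_\Omega\to H^0(\xi,\mathcal L_\Omega|_\xi)$ is surjective. Write $S_\Omega$ for the finite set of geometric points lying over $S$, and $Z_\Omega$ for the base change of $Z$. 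Because $S$ lies in the smooth locus and $\kk(P)/k$ is separable, $Z_\Omega$ is the disjoint union of the first-order neighbourhoods $\Spec(\OO_{Y_\Omega,y}/\mm_y^2)$ for $y\in S_\Omega$. The argument then splits into three cases according to how the support of $\xi$ meets $S_\Omega$.

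\emph{Case 1: $\xi$ is disjoint from $S_\Omega$.} Then $\xi\subset (Y\setminus S)_\Omega$. By (a), the $w$-block defines a locally closed embedding of $Y\setminus S$. Lemma~\ref{lem:closed-immersion-separates-length-two} then shows that the $w$-block alone surjects onto $H^0(\xi,\mathcal L|_\xi)$.

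\emph{Case 2: $\xi$ is supported at a single point $y\in S_\Omega$.} Since $\xi$ has length two and is supported at $y$, its ideal contains $\mm_y^2$, so $\xi\subset Z_\Omega$. By (c), $Z\to\PP^{A-1}$ is a closed immersion, and this persists after base change to $\Omega$. Lemma~\ref{lem:closed-immersion-separates-length-two}, applied to the finite scheme $Z$ with the $u$-block, gives surjectivity from the $u$-block. The same reasoning covers $\xi=\{y_1\}\amalg\{y_2\}$ with both $y_i\in S_\Omega$, since then $\xi\subset Z_\Omega$ as a reduced subscheme.

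\emph{Case 3: $\xi=\{y\}\amalg\{x\}$ with $y\in S_\Omega$ and $x\notin S_\Omega$.} By (b), every $w_j$ vanishes at $y$. By (a), some $w_j$ is nonzero at $x$. By (d), some element of $W$ is nonzero at $y$, and since the $w$-block vanishes there it must be one of the $u_\ell$. Thus the image of $W_\Omega$ in $\mathcal L_y\oplus\mathcal L_x$ contains a vector $(0,\ast)$ with $\ast\ne0$ and a vector $(c,\ast')$ with $c\ne0$. These two vectors are linearly independent, so the restriction is surjective.

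These cases are exhaustive. A nonreduced $\xi$ is supported at one point, which lies either in $S_\Omega$ (Case 2) or outside it (Case 1). A reduced $\xi$ has its two points distributed as none, one, or both in $S_\Omega$. Lemma~\ref{lem:linear-system-criterion} then yields that $\phi_W$ is a closed immersion.

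The main obstacle is the bookkeeping in Case 2. One must check that a nonreduced length-two geometric subscheme at $y$ really lies inside $Z_\Omega$; this holds because any length-two local ideal at $y$ contains $\mm_y^2$. One must also check that $Z_\Omega$ is still the disjoint union of first-order neighbourhoods after base change, which is where smoothness and separability of $\kk(P)$ over the perfect field $k$ are used. Finally, one must confirm that the closed immersion in (c) base changes correctly.
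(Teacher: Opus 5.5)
Your proposal is correct and follows essentially the same route as the paper. Both arguments use the same case split on the support of $\xi$ relative to $S_\Omega$: Lemma~\ref{lem:closed-immersion-separates-length-two} is applied to the $w$-block away from $S$ and to the $u$-block on $Z_\Omega$, the mixed case is handled by the vanishing of the $w_j$ at points of $S_\Omega$, and Lemma~\ref{lem:linear-system-criterion} gives the conclusion. One step you elide, which the paper makes explicit, is that this last lemma gives a closed immersion into $\PP(W)$; it must then be composed with the linear closed immersion $\PP(W)\hookrightarrow\PP^{A+B-1}_k$ induced by $k^{A+B}\twoheadrightarrow W$, since the $A+B$ sections need not be linearly independent.
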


\begin{proof}
Let $\Omega/k$ be algebraically closed and let $\xi\subset Y_\Omega$ have length two.  The no-common-zero hypotheses in (a) and (d) are the surjectivity of the corresponding evaluation maps to $\mathcal L$ on length-one subschemes; these surjections are preserved after base change to $\Omega$.

If $\xi\subset (Y\setminus S)_\Omega$, then the $w$-block alone gives a surjection onto $H^0(\xi,\mathcal L|_\xi)$ by Lemma~\ref{lem:closed-immersion-separates-length-two}.  If $\xi$ is supported over $S$, then $\xi\subset Z_\Omega$.  Indeed, for each $P\in S$ the extension $\kk(P)/k$ is finite separable, so $(\Spec(\OO_{Y,P}/\mm_P^2))_\Omega$ is the disjoint union of the first infinitesimal neighbourhoods of the geometric points of $P_\Omega$.  Since $Y$ is smooth at $P$, any local length-two subscheme supported at such a geometric point has defining ideal containing the square of the maximal ideal, and a reduced length-two subscheme supported over $S$ is contained in the corresponding disjoint union of reduced points.  Thus every length-two subscheme supported over $S$ is contained in $Z_\Omega$, and the $u$-block gives the required surjection by the same lemma.

The only remaining case is
\[
        \xi=\{x\}\amalg\{y\},
        \qquad
        x\in S_\Omega,
        \quad
        y\in (Y\setminus S)_\Omega.
\]
All $w_j$ vanish at $x$, while the base-changed $w$-block has no common zero at $y$; hence some $w_j$ is nonzero at $y$.  Thus the image of the full restriction map contains an element of $\mathcal L_x\oplus\mathcal L_y$ of the form $(0,\lambda)$ with $\lambda\ne0$.  By the base-changed no-common-zero hypothesis for the full collection and the vanishing of the $w_j$ at $x$, some $u_\ell$ is nonzero at $x$.  Subtracting a suitable scalar multiple of the previous $w$-section from $u_\ell$, we obtain a section whose value is nonzero at $x$ and zero at $y$.  These two elements span $\mathcal L_x\oplus\mathcal L_y$, so the restriction map is surjective.

Thus the span $W$ of the full collection separates every geometric length-two subscheme of $Y$.  Hypothesis (d) says that $W$ is basepoint-free.  Lemma~\ref{lem:linear-system-criterion} gives a closed immersion
\[
        Y\hookrightarrow\PP(W).
\]
The chosen ordered generating set induces a surjection $k^{A+B}\twoheadrightarrow W$ and hence a linear closed immersion $\PP(W)\hookrightarrow\PP(k^{A+B})=\PP^{A+B-1}_k$.  The displayed morphism is the composition of these two closed immersions, and is therefore a closed immersion.
\end{proof}

\begin{lemma}[A nonzero section with zero first jet is singular]\label{lem:first-order-singular}
Let $(R,\mathfrak m)$ be a regular local domain of dimension $m\ge1$.  If $0\ne f\in\mathfrak m^2$, then $R/(f)$ is not regular.  Consequently, if $X/k$ is smooth, $\mathcal L$ is a line bundle on $X$, $P\in X$ is a closed point, and a section $s\in H^0(X,\mathcal L)$ has nonzero germ represented by $f\in\mathfrak m_P^2$ after a local trivialization, then the zero scheme $V(s)$ is not smooth at $P$.
\end{lemma}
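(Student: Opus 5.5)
The approach is the standard fact that a regular local ring modulo a non-zero-divisor is regular exactly when that element lies outside $\mathfrak m^2$. I will argue through embedding dimensions.

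\emph{First assertion.} Since $R$ is a domain and $f\neq0$, $f$ is a non-zero-divisor. Krull's principal ideal theorem, combined with the fact that $f$ lies in no minimal prime, gives $\dim R/(f)=m-1$. The maximal ideal of $\bar R=R/(f)$ is $\bar{\mathfrak m}=\mathfrak m/(f)$, and
\[
        \bar{\mathfrak m}/\bar{\mathfrak m}^2\simeq \mathfrak m/(\mathfrak m^2+(f)).
\]
Because $f\in\mathfrak m^2$, the right-hand side is $\mathfrak m/\mathfrak m^2$, which has dimension $m$ over $R/\mathfrak m$ since $R$ is regular. Thus $\edim\bar R=m>m-1=\dim\bar R$, so $\bar R$ is not regular. (If $m=1$, then $\bar R$ is zero-dimensional with embedding dimension $1$, which is again not regular.)

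\emph{Geometric consequence.} The plan is to reduce to the local statement.
\begin{enumerate}
\item Since $X/k$ is smooth and $k$ is perfect, $\OO_{X,P}$ is a regular local ring.
\item Regular local rings are domains, and $m=\dim\OO_{X,P}$ is positive in the intended setting. If $\dim\OO_{X,P}=0$, then $\mathfrak m_P=0$ and no nonzero $f\in\mathfrak m_P^2$ exists, so that case is vacuous.
\item After trivializing $\mathcal L$ near $P$, the local ring of $V(s)$ at $P$ is $\OO_{X,P}/(f)$. This ring is nonzero because $f\in\mathfrak m_P$, and it is not regular by the first assertion.
\item A scheme smooth over the perfect field $k$ is regular at every point. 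Hence $V(s)$ is not smooth at $P$.
\end{enumerate}
This argument is independent of the choice of trivialization: changing the trivialization multiplies $f$ by a unit, which preserves both the ideal $(f)$ and the condition $f\in\mathfrak m_P^2$.

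There is no serious obstacle here. The only point needing care is the dimension count $\dim R/(f)=m-1$, which uses that $f$ is a nonzero element of a domain. Note that the statement also fixes the expected dimension of the section: had $s$ vanished identically on the component, the quotient would be $R$ itself, which is regular, so the hypothesis $f\neq0$ is essential.
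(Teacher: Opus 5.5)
Your proposal is correct and follows essentially the same route as the paper: $f$ is a non-zero-divisor, $\dim R/(f)=m-1$, the identification $\mathfrak m/(\mathfrak m^2+(f))=\mathfrak m/\mathfrak m^2$ gives $\edim R/(f)=m>\dim R/(f)$, and then regularity of the smooth local ring $\OO_{X,P}/(f)$ fails. The only minor difference is in how $\dim R/(f)=m-1$ is obtained: the paper uses the Cohen--Macaulay property of regular local rings, whereas you use $f\notin 0$ (the unique minimal prime) for the upper bound and the Krull-type inequality $\dim R/(f)\ge\dim R-1$ for the lower bound; both arguments are valid.
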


\begin{proof}
A regular local ring is Cohen--Macaulay.  Since $R$ is a domain and $f\ne0$, the element $f$ is a non-zero-divisor; since $f\in\mathfrak m^2\subset\mathfrak m$, it is not a unit.  Quotienting a Cohen--Macaulay local ring by one non-zero-divisor lowers the dimension by one, so
\[
        \dim R/(f)=m-1.
\]
Let $\mathfrak n=\mathfrak m/(f)$ be the maximal ideal of $R/(f)$.  Because $f\in\mathfrak m^2$,
\[
        \mathfrak n/\mathfrak n^2
        \simeq
        \mathfrak m/(\mathfrak m^2+(f))
        =
        \mathfrak m/\mathfrak m^2.
\]
Since $R$ is regular of dimension $m$, the last vector space has dimension $m$ over the residue field.  Hence
\[
        \edim R/(f)=m>m-1=\dim R/(f),
\]
so $R/(f)$ is not regular.  For a smooth $X/k$, the local ring $\OO_{X,P}$ is regular; the local ring of $V(s)$ at $P$ is $\OO_{X,P}/(f)$.  Since smooth schemes over fields are regular, nonregularity implies nonsmoothness at $P$.
\end{proof}

The following auxiliary fact is used only to fix one embedding system.  The proof is written in an ordered-tuple form, so the finite-field avoidance is an explicit point count rather than an appeal to the false principle that a nonempty open set over a finite field must have a rational point.

\begin{lemma}[Componentwise nondegenerate compactified initial embedding]\label{lem:initial}
Assume $k=\F_q$ and that $X$ is a nonempty smooth quasiprojective $k$-scheme of pure dimension $m\ge1$.  Then there exist a projective closure $j:X\hookrightarrow\barX$ such that $X$ is open in $\barX$ and $\barX$ is reduced and of pure dimension $m$, a line bundle $\overline{\EE}$ on $\barX$, and sections
\[
        \overline e_0,\ldots,\overline e_r\in H^0(\barX,\overline{\EE})
\]
such that
\[
        \overline e:\barX\longrightarrow\PP^r_k,
        \qquad
        x\longmapsto[\overline e_0(x):\cdots:\overline e_r(x)]
\]
is a closed immersion.  If
\[
        \EE=j^*\overline{\EE},
        \qquad
        e_i=\overline e_i|_X,
\]
then
\[
        e:X\longrightarrow\PP^r_k,
        \qquad
        x\longmapsto[e_0(x):\cdots:e_r(x)]
\]
is a locally closed embedding and, for every connected component $C$ of $X$, if $k_C$ denotes the algebraic closure of $k$ in $k(C)$, then the restrictions
\[
        e_0|_C,\ldots,e_r|_C
\]
are $k_C$-linearly independent.  Consequently, for every field extension $K/k$ and every connected component $D$ of $C_K$, the image of $D$ is not contained in any hyperplane of $\PP^r_K$.
\end{lemma}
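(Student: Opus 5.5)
Choose any projective closure of $X$ inside some $\PP^n_k$ and let $\barX$ be the scheme-theoretic closure of $X$ with its reduced structure. Since $X$ is reduced and of pure dimension $m$, every irreducible component of $\barX$ is the closure of a component of $X$, so $\barX$ is reduced, of pure dimension $m$, and contains $X$ as an open subscheme. Let $\Ample=\OO_{\barX}(1)$, which is very ample. Take $\overline{\EE}=\Ample^{\otimes d}$ for a large $d$, and let $\overline e_0,\dots,\overline e_r$ be a $k$-basis of the whole space $V_d=H^0(\barX,\Ample^{\otimes d})$. The complete linear system of a very ample line bundle is basepoint-free and separates every geometric length-two subscheme. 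Lemma~\ref{lem:linear-system-criterion} therefore shows that $\overline e$ is a closed immersion. Restricting to the open subscheme $X$ then gives a locally closed embedding $e$.

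The remaining content is the componentwise independence. Fix a connected component $C$ of $X$ and let $\overline C$ be its closure, which is an irreducible component of $\barX$. By Lemma~\ref{lem:component-constants}, $C$ is integral and the structure morphism factors through $\Spec k_C$. Suppose there is a nontrivial relation $\sum c_i e_i|_C=0$ with $c_i\in k_C$. Choose a $k$-basis $\gamma_1,\dots,\gamma_t$ of $k_C$ and write $c_i=\sum_\nu c_{i\nu}\gamma_\nu$ with $c_{i\nu}\in k$. Put $f_\nu=\sum_i c_{i\nu}e_i$; some $f_\nu$ is nonzero as an element of $V_d$. The relation reads $\sum_\nu\gamma_\nu f_\nu|_C=0$.

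The plan is to rule this out by choosing $d$ large so that the restriction map
\[
        V_d\otimes_k k_C\longrightarrow H^0(\overline C,\Ample^{\otimes d}|_{\overline C})
\]
(a $k_C$-linear map, since $\overline C$ is normal at the generic point and $k_C$ consists of regular functions on $C$) is injective. On the compactification, the difficulty is that $k_C$ need not act on $\overline C$, because $\overline C$ may fail to be normal.

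To avoid this, test injectivity on finitely many closed points instead. Pick a closed point $P\in C$ whose residue field contains $k_C$ and has large degree, using Lemma~\ref{lem:closedpoints}. Let $Z_0\subset \barX\setminus\overline C$ be a finite set of points, one on each other component away from $\overline C$. By Lemma~\ref{lem:interpolation-projective}, for $d\gg0$ we can surject onto finite data. The argument then runs as follows.

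First, replace the complete system by the observation that if $e_i|_C$ were $k_C$-dependent, then the $k$-linear map $V_d\to H^0(C,\Ample^d)$ would fail to extend injectively to $V_d\otimes_k k_C$. Injectivity fails only if some nonzero element of $V_d\otimes k_C$ dies on $C$.

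Second, show that $\ker(V_d\to H^0(\overline C))$, tensored with $k_C$, is exactly the kernel of the $k_C$-map. To do this, base change to $K=k_C$. By Lemma~\ref{lem:component-base-change}, $C_{k_C}$ splits into copies indexed by the embeddings of $k_C$, and $H^0(C_{k_C})=H^0(C)\otimes_k k_C$ by Lemma~\ref{lem:H0-base-change}. One of these copies is $C$ itself, viewed as a $k_C$-scheme.

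If the relation holds on $C$, Galois conjugation shows that its conjugates hold on the other copies. Since the $\gamma_\nu$ are linearly independent, one can solve for each $f_\nu|_C=0$ by Dedekind independence of characters. Any $k$-linear combination of the $e_i$ that vanishes on $C$ vanishes on $\overline C$, and then equals zero in $V_d$ only if $V_d\to H^0(\overline C)$ is injective.

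This last point is the main obstacle. A complete system on a reducible $\barX$ is not injective on a single component. I would therefore modify the construction: take $\overline e_i$ to be a basis of $V_d$ only after adjusting $\barX$ to be a disjoint union. Replace $\barX$ by the disjoint union of the closures $\overline C$ of the components of $X$. This is still a projective closure, because the components of $X$ are disjoint open pieces and their closures may be separated, and it is reduced of pure dimension $m$. Then use a very ample bundle whose restriction to each $\overline C$ is very ample and embeds the disjoint union, for instance via Lemma~\ref{lem:finite-disjoint-closed-immersion}-style gluing with distinct supports. The complete system then decomposes as a direct sum over components.

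The argument still needs $V_d\to H^0(\overline C)$ to be injective in the presence of other components. I expect to settle this by noting that the Galois argument only needs the $k$-span of the $e_i|_C$ to have the same $k$-dimension as the $k_C$-span times $[k_C:k]$. Equivalently, $H^0(\overline C)$ restricted to $C$ must be a $k_C$-stable subspace, which is where the normality of $C$ enters.

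Finally, the last sentence of the statement follows directly from Lemma~\ref{lem:component-base-change}.
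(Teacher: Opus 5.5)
There is a genuine gap, and it is in the construction itself, not in a missing estimate. A $k$-basis $\overline e_0,\ldots,\overline e_r$ of the complete system $V_d=H^0(\barX,\Ample^{\otimes d})$ is in general \emph{not} $k_C$-linearly independent on $C$, for any $d$. Independence of the $e_i|_C$ over $k_C$ is equivalent to injectivity of the $k_C$-linear map $V_d\otimes_k k_C\to H^0(C,\Ample^{\otimes d}|_C)$, so $\dim_kV_d$ would have to equal the $k_C$-dimension of the image. Take $X=\PP^1_{\F_{q^2}}$ over $k=\F_q$. Then $k_C=\F_{q^2}$ and $\dim_kV_d=2(d+1)$, but $\dim_{k_C}H^0(C,\OO(d))=d+1$. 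Geometrically, $X_{\bar k}$ is two disjoint copies of $\PP^1$, each spanning only a $\PP^d$ inside $\PP^{2d+1}$, which contradicts the last sentence of the lemma. Likewise, if two components of $X$ have disjoint closures, the complete system is a direct sum and each component is degenerate. Passing to $\coprod\overline C$ makes this worse, since basis vectors supported on other components restrict to $0$ on $C$. Your closing criterion is backwards: if the image of $V_d$ in $H^0(C)$ is $k_C$-stable, a $k$-basis spans over $k_C$ a space of dimension at most $\dim_kV_d/[k_C:k]$, which \emph{guarantees} dependence when $k_C\ne k$. The Galois/Dedekind step also fails, because $H^0(C,\cdot)$ has no $\sigma$-semilinear action fixing the $e_i|_C$, so conjugate relations need not hold. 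On $\PP^1_{\F_{q^2}}$ with $\F_{q^2}=\F_q(\alpha)$ and $0\ne s\in V_d$, take $f_1=\alpha s$, $f_2=-s$, $(\gamma_1,\gamma_2)=(1,\alpha)$. Then $\gamma_1f_1+\gamma_2f_2=0$, but $\gamma_1^qf_1+\gamma_2^qf_2=(\alpha-\alpha^q)s\ne0$. Your argument works only when $X$ is connected with $k_C=k$: then $\barX$ is integral and restriction to the dense open $C$ is injective.

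The missing idea is to replace the complete system by a \emph{small} subsystem $W\subset V_d$. Its dimension must be at most $c_d=\min_\mu c_{\mu,d}$, where $c_{\mu,d}$ is the $k_\mu$-dimension of the image of $V_d\otimes_kk_\mu$ in $H^0(C_\mu,\Ample^{\otimes d}|_{C_\mu})$. It must be $k_\mu$-independent on every component and still separate every geometric length-two subscheme of $\barX$. The paper first shows $c_{\mu,d}\ge d+1$, using the monomials $s^{d-a}t^a$ with $t/s$ nonconstant and hence transcendental over $k_\mu$. Over a finite field a ``generic'' choice needs an actual count, so it then takes a random ordered $L_d$-tuple with $L_d=\lfloor c_d/2\rfloor$. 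Lemma~\ref{lem:extension-dependence-count} bounds the proportion of tuples that are $k$-dependent, or $k_\mu$-dependent on some $C_\mu$, by terms $Lq^{-(c-L+1)}$. Lemma~\ref{lem:length-two-bad-locus} bounds the proportion failing length-two separation by $P_0(d,L)q^{-(L-b)}$. For $d\gg0$ the sum is below $1$, so a good tuple exists. Length-two separation then gives basepoint-freeness, and Lemma~\ref{lem:linear-system-criterion} gives the closed immersion. Your first paragraph and your final appeal to Lemma~\ref{lem:component-base-change} are fine, but they cover only the easy part.
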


\begin{proof}
Choose a locally closed immersion of $X$ into a projective space and let $\barX$ be the reduced scheme-theoretic closure of its image.  Then $X$ is an open subscheme of $\barX$, every irreducible component of $\barX$ meets $X$, and $\barX$ is projective, reduced, and of pure dimension $m$.  Choose a very ample line bundle $\Ample$ on $\barX$.  Put
\[
        V_d=H^0(\barX,\Ample^{\otimes d}),
        \qquad
        M_d=\dim_kV_d.
\]
For every $d\ge1$, the complete system $V_d$ gives a projective embedding of $\barX$.

Let $C_1,\ldots,C_s$ be the connected components of $X$, and put $k_\mu=k_{C_\mu}$.  By Lemma~\ref{lem:component-constants}, every $C_\mu$ is integral and geometrically integral over $k_\mu$.  Define
\[
        \rho_{\mu,d}:V_d\otimes_k k_\mu
        \longrightarrow
        H^0(C_\mu,j^*\Ample^{\otimes d}|_{C_\mu})
\]
and let
\[
        c_{\mu,d}=\dim_{k_\mu}\im(\rho_{\mu,d}).
\]
We first show that $c_{\mu,d}\to\infty$ for every $\mu$.  Since $\Ample$ is very ample and $C_\mu$ has positive dimension, there are sections $s,t\in H^0(\barX,\Ample)$ such that $s|_{C_\mu}$ is not identically zero and $t/s$ is a nonconstant rational function on $C_\mu$.  Indeed, the morphism defined by $H^0(\barX,\Ample)$ is a closed immersion of $\barX$ into a projective space; the closure of $C_\mu$ has positive dimension, so its image is not a single point.  Two homogeneous coordinates with nonconstant ratio on this image, with the denominator not identically zero on $C_\mu$, give such $s$ and $t$.  If
\[
        \sum_{a=0}^d \lambda_a s^{d-a}t^a=0
        \qquad(\lambda_a\in k_\mu)
\]
on $C_\mu$, then on the nonempty open where $s\ne0$ we have
\[
        \sum_{a=0}^d \lambda_a(t/s)^a=0.
\]
The element $t/s\in k(C_\mu)$ is nonconstant and therefore is not algebraic over the finite field $k_\mu$, because $k_\mu$ is algebraically closed in $k(C_\mu)$.  Hence all $\lambda_a$ are zero.  Thus the $d+1$ restrictions
\[
        s^d,s^{d-1}t,\ldots,t^d
\]
are $k_\mu$-linearly independent, and $c_{\mu,d}\ge d+1$.

Next consider the compactified length-two separation condition.  The scheme $\mathcal H=\Hilb^2(\barX)$ is nonempty: after base change to an algebraic closure, a closed point on any positive-dimensional irreducible component of $\barX$ admits a length-two infinitesimal neighbourhood.  Construct $B_d=\PP_{\mathcal H}(\mathcal E_d)$ and its quotient line bundle $\mathcal Q_d$ as in Lemma~\ref{lem:length-two-degree}.  Put
\[
        b=b_{\barX}=\dim\mathcal H+1.
\]
By Lemma~\ref{lem:length-two-bad-locus}, for $L>b$ the proportion of ordered $L$-tuples in $V_d^L$ failing to separate some geometric length-two subscheme of $\barX$ is at most
\[
        P_0(d,L)q^{-(L-b)},
\]
where $P_0(d,L)$ is polynomial in $d$ and $L$, and the lower bound on $d$ in that lemma is independent of $L$.

For componentwise linear independence, Lemma~\ref{lem:extension-dependence-count} gives, for fixed $\mu$ and $L\le c_{\mu,d}$, that the proportion of ordered $L$-tuples whose restrictions to $C_\mu$ are $k_\mu$-linearly dependent is at most
\[
        Lq^{-(c_{\mu,d}-L+1)}.
\]
The same lemma, applied to the identity map $V_d\to V_d$, shows that the proportion of ordered $L$-tuples that are $k$-linearly dependent in $V_d$ is at most
\[
        Lq^{-(M_d-L+1)}.
\]

Set
\[
        c_d=\min_\mu c_{\mu,d},
        \qquad
        L_d=\left\lfloor \frac{c_d}{2}\right\rfloor .
\]
For $d\gg0$ we have $L_d>b$ and $L_d\to\infty$.  Since $c_{\mu,d}\ge c_d$ and $L_d=\lfloor c_d/2\rfloor$, we have
\[
        c_{\mu,d}-L_d\ge c_d-\left\lfloor\frac{c_d}{2}\right\rfloor\longrightarrow\infty.
\]
Moreover $c_d\le M_d$ and $c_d\ge d+1$, so
\[
        L_d\le\frac{c_d}{2}\le\frac{M_d}{2},
        \qquad
        M_d-L_d\ge\frac{M_d}{2}\longrightarrow\infty.
\]
The Hilbert function $M_d$ is eventually a polynomial in $d$, while $L_d\le M_d$ and $L_d\ge(d+1)/2-1$.  Hence $P_0(d,L_d)$ and the other polynomial factors are dominated by the exponential powers of $q$, and for $d\gg0$ one has
\[
        L_dq^{-(M_d-L_d+1)}
        +P_0(d,L_d)q^{-(L_d-b)}
        +\sum_{\mu=1}^s L_dq^{-(c_{\mu,d}-L_d+1)}
        <1.
\]
Thus there exists an ordered tuple
\[
        \overline e_0,\ldots,\overline e_{L_d-1}\in V_d
\]
which is $k$-linearly independent, separates every geometric length-two subscheme of $\barX$, and whose restrictions to every $C_\mu$ are $k_\mu$-linearly independent.  Let
\[
        W=\Span_k(\overline e_0,\ldots,\overline e_{L_d-1}),
        \qquad
        r=L_d-1,
        \qquad
        \overline{\EE}=\Ample^{\otimes d}.
\]
The length-two separation condition implies that $W$ is basepoint-free on $\barX$.  Indeed, if the common zero locus of $W$ on $\barX$ were nonempty, then, because $\barX$ is projective over a field, it would contain a closed point $x$.  After extension of $\kk(x)$ to an algebraic closure $\Omega$, the point $x$ becomes a geometric point of $\barX_\Omega$.  Since every irreducible component of $\barX$ has positive dimension, the local ring of $\barX_\Omega$ at this geometric point is not a field, and its maximal ideal has nonzero cotangent space.  Hence there exists a closed length-two subscheme
\[
        \xi\subset\barX_\Omega
\]
supported at this point.  All sections in $W_\Omega$ vanish at the support, so their restrictions to $\xi$ lie in the one-dimensional nilpotent ideal of $H^0(\xi,\overline{\EE}|_\xi)$ and cannot span this two-dimensional vector space.  This contradicts the chosen length-two separation.

Therefore Lemma~\ref{lem:linear-system-criterion} applies to the projective scheme $\barX$, and the morphism
\[
        \overline e:\barX\to\PP(W)\simeq\PP^r_k
\]
is a closed immersion.  Its restriction $e:X\to\PP^r_k$ is a locally closed embedding.  The componentwise $k_\mu$-linear independence is part of the construction, and the assertion after arbitrary scalar extension follows from Lemma~\ref{lem:component-base-change}.
\end{proof}

\section{Local codes and shielded sections}\label{sec:local-shield}

The next lemma is the local coding device.  It explains the inequality $A\ge m+3$: after imposing one $k$-linear relation among $A$ homogeneous coordinates, enough affine coordinates remain to encode both the residue field and the $m$ first-order tangent parameters.

\begin{lemma}[A first-order code with one prescribed relation]\label{lem:localcode}
Fix integers $m\ge0$ and $A\ge m+3$, and let $a=(a_1,\ldots,a_A)\in k^A$.  For every finite extension $K/k$ and every square-zero $K$-algebra
\[
        J=K\oplus M,
        \qquad
        M^2=0,
        \qquad
        \dim_K M=m,
\]
there exist elements
\[
        \eta_1,\ldots,\eta_A\in J
\]
with the following properties:
\begin{alphlist}
\item if $a\ne0$, then
\[
        \sum_{\ell=1}^{A}a_\ell\eta_\ell=0;
\]
\item the morphism
\[
        \Spec J\longrightarrow\PP^{A-1}_k,
        \qquad
        z\longmapsto [\eta_1(z):\cdots:\eta_A(z)]
\]
is a closed immersion.
\end{alphlist}
Moreover, for every finite set $F\subset |\PP^{A-1}_k|$ there is an integer
\[
        d_0=d_0(F,a,A,m)
\]
such that, for every finite extension $K/k$ with $[K:k]\ge d_0$ and every square-zero $K$-algebra $J=K\oplus M$ as above, the elements $\eta_\ell$ may be chosen to satisfy \textup{(a)} and \textup{(b)} and so that the closed point $\Spec K\to\PP^{A-1}_k$ defined by the residue value vector
\[
        (\overline\eta_1,\ldots,\overline\eta_A)\in K^A
\]
does not lie in $F$.
\end{lemma}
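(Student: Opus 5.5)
The plan is to reduce to a normal form in fewer coordinates, use an explicit primitive-element code, and verify the closed-immersion property by a coefficient-field argument.

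\emph{Reduction.} If $a\ne0$, let $H_a=\{\sum_\ell a_\ell x_\ell=0\}\subset\PP^{A-1}_k$. Choose a $k$-linear isomorphism $\Phi:k^{A-1}\xrightarrow{\sim}\ker(a)\subset k^A$. It induces a linear closed immersion $\PP^{A-2}_k\simeq H_a\hookrightarrow\PP^{A-1}_k$. If $a=0$, take $\Phi=\mathrm{id}_{k^A}$ instead. In either case set $n+1=\dim(\operatorname{source}\Phi)$, so $n\ge A-2\ge m+1$. It then suffices to find $\zeta_0,\ldots,\zeta_n\in J$ defining a closed immersion $\Spec J\to\PP^n_k$, and to put $(\eta_1,\ldots,\eta_A)=\Phi(\zeta_0,\ldots,\zeta_n)$, extended $J$-linearly. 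Relation (a) then holds automatically, since $\Phi$ lands in $\ker(a)$. Property (b) holds because a composite of closed immersions is a closed immersion. The residue point is $\Phi$ applied to the residue point of $\zeta$, so avoiding $F$ reduces to avoiding the finite set $F'$ of closed points of $\PP^n_k$ whose images lie in $F$.

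\emph{Explicit code.} Since $K/k$ is a finite extension of finite fields, choose a primitive element $\theta$ with $K=k(\theta)$, and a $K$-basis $\mu_1,\ldots,\mu_m$ of $M$. Define
\[
\zeta_0=1,\qquad \zeta_1=\theta,\qquad \zeta_{1+i}=\mu_i\ (1\le i\le m),\qquad \zeta_j=0\ (j>m+1).
\]
This uses $n\ge m+1$. Because $\zeta_0=1$ is a unit, the morphism lands in the chart $D_+(x_0)\simeq\mathbb A^n_k$, and it is given by the $k$-algebra map $k[y_1,\ldots,y_n]\to J$ with $y_j\mapsto\zeta_j$. The residue point is $[1:\theta:0:\cdots:0]$. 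Its residue field is $k(\theta)=K$, so it is a closed point of degree exactly $[K:k]$. Take
\[
d_0=1+\max\{\deg Q : Q\in F'\}.
\]
Then $[K:k]\ge d_0$ forces the residue point to lie outside $F'$, which gives the final assertion.

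\emph{Main step: surjectivity.} The remaining and main point is that the image $R\subset J$ of $k[y_1,\ldots,y_n]$ is all of $J$; this is the closed-immersion condition. The ring $R$ is a finite local $k$-algebra, and its image in $K$ contains $\theta$, so $R\to K$ is surjective. Since $K/k$ is separable, $K$ is formally étale over $k$. Hence the square-zero thickening $J\to K$ has a unique $k$-algebra section $\sigma$, and likewise $R$ contains a coefficient field mapping isomorphically onto $K$. By uniqueness of the section into $J$, that coefficient field is $\sigma(K)$, so $\sigma(K)\subset R$. Writing elements of $J=\sigma(K)\oplus M$, we get $M\cap R\supset\mu_1,\ldots,\mu_m$. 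The set $M\cap R$ is a $\sigma(K)$-submodule, hence contains the $K$-span $M$. Therefore $R=J$. (If one prefers, identify $\sigma(K)$ with $K$ at the outset so that $\theta\in K\subset J$ literally.)

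I expect this coefficient-field verification to be the only delicate point; everything else is linear algebra and a degree count.
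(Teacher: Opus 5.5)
Your proposal is correct and is essentially the paper's proof. Your isomorphism $\Phi\colon k^{A-1}\to\ker(a)$ plays the role of the last $A-1$ columns of the paper's matrix $G$ with $aG=(1,0,\ldots,0)$, your code $(1,\theta,\mu_1,\ldots,\mu_m,0,\ldots,0)$ is the paper's $(\xi_2,\xi_3,\ldots,\xi_A)$, and the degree count for avoiding $F$ is the same (you bound over all of $F'$, the paper only over $F\cap L$; both work). The coefficient-field argument in your main step is valid but more than is needed: $J$ is given as a $K$-algebra, so $K=k[\theta]$ already lies in $J$, and the image of the chart's coordinate ring contains $K$ and hence $M=\sum_i K\mu_i$ directly, which is how the paper concludes.
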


\begin{proof}
First suppose $a\ne0$.  Regard $a$ as a row vector and choose $G\in\operatorname{GL}_A(k)$ such that
\[
        aG=(1,0,\ldots,0).
\]
It is enough to construct coordinates $\xi_1,\ldots,\xi_A$ satisfying $\xi_1=0$ and then set
\[
        (\eta_1,\ldots,\eta_A)^t=G(\xi_1,\ldots,\xi_A)^t;
\]
the projective automorphism induced by $G$ preserves closed immersions, and
\[
        \sum a_\ell\eta_\ell=aG\xi=\xi_1=0.
\]
Choose an element $\theta\in K$ that generates $K$ over $k$, and choose a $K$-basis $\tau_1,\ldots,\tau_m$ of $M$.  Define
\[
        \xi_1=0,
        \qquad
        \xi_2=1,
        \qquad
        \xi_3=\theta,
        \qquad
        \xi_{3+i}=\tau_i\quad(1\le i\le m),
\]
and set the remaining $\xi_\ell$ equal to $0$.  On the affine chart $X_2\ne0$ the induced $k$-algebra map has image containing $\theta,\tau_1,\ldots,\tau_m$.  Hence it contains $K=k(\theta)$ and the ideal $M=K\tau_1+\cdots+K\tau_m$, so it is surjective onto $J=K\oplus M$.  Thus the morphism $\Spec J\to\{X_2\ne0\}$ is a closed immersion.  Its composite with the open immersion $\{X_2\ne0\}\hookrightarrow\PP^{A-1}_k$ is an immersion.  Since $\Spec J$ is finite, hence proper, over $k$ and $\PP^{A-1}_k$ is separated over $k$, this composite is proper; a proper immersion is a closed immersion.  This proves (a) and (b) when $a\ne0$.

If $a=0$, use instead
\[
        \xi_1=1,
        \xi_2=\theta,
        \xi_{2+i}=\tau_i\quad(1\le i\le m),
\]
with the remaining coordinates equal to $0$.  The same affine-chart argument, followed by the same proper-immersion argument, gives a closed immersion, and there is no relation to impose.

For the uniform avoidance assertion, first pull the forbidden finite set back by the projective automorphism induced by $G$ in the case $a\ne0$; in the case $a=0$ leave it unchanged.  In these transformed coordinates the construction places the residue point on a fixed projective line $L$ over $k$ whose affine coordinate is $\theta$.  The transformed finite forbidden set meets $L$ in finitely many closed points.  Let $D_F$ be the maximum of their degrees, with $D_F=0$ if the intersection is empty.  Because $\theta$ generates $K$ over $k$, the resulting point of $L$ has residue field $k(\theta)=K$ and therefore degree $[K:k]$.  Hence, if $[K:k]>D_F$, this point cannot belong to the transformed forbidden set.  Taking
\[
        d_0=D_F+1
\]
and then applying the inverse projective linear change gives the required avoidance in the original coordinates.  The integer $d_0$ depends only on $F$ and the change of coordinates determined by $a$, and is independent of $K$ and $M$.
\end{proof}

We next isolate the shielded sections.  The binary vector prescribed at each test point is required to be nonzero; this is necessary and is exactly what will be provided by Lemma~\ref{lem:binary}.

\begin{lemma}[Compactified shielded sections with prescribed binary values]\label{lem:shield}
Let $X$ be a smooth quasiprojective $k$-scheme of pure dimension $m\ge1$, and let
\[
        j:X\hookrightarrow\barX
\]
be a projective closure such that $X$ is open in $\barX$ and $\barX$ is of pure dimension $m$.  Let $\Ample$ be an ample line bundle on $\barX$.  Let $S\subset |X|$ be finite, and put
\[
        Z=\coprod_{P\in S}P^{(2)}.
\]
Let
\[
        \mathcal R=\set{R_1,\ldots,R_t}\subset |X|\setminus S
\]
be finite.  For each $R_\nu$ fix a nonzero vector
\[
        \epsilon_\nu=(\epsilon_{\nu0},\ldots,\epsilon_{\nu m})
        \in\set{0,1}^{m+1}\setminus\{0\}
        \subset \kk(R_\nu)^{m+1}.
\]
Then there exist an integer $D>0$, trivializations of $\Ample^{\otimes D}$ at the points of $\mathcal R$, and sections
\[
        \overline h_0,\ldots,\overline h_m\in H^0(\barX,\Ample^{\otimes D})
\]
such that, writing $h_i=\overline h_i|_X$ and using the induced trivializations on $X$,
\begin{align}
        h_i|_Z&=0,                                                    \label{eq:shield-fat}\\
        h_i(R_\nu)&=\epsilon_{\nu i}       &&(1\le\nu\le t),          \label{eq:shield-values}\\
        \bigcap_{i=0}^{m}V_{\barX}(\overline h_i)&=S                  \label{eq:shield-zero}
\end{align}
set-theoretically.  Moreover, replacing all $\overline h_i$ by $\overline h_i^n$ preserves \eqref{eq:shield-fat}--\eqref{eq:shield-zero} and replaces $\Ample^{\otimes D}$ by the arbitrarily positive line bundle $\Ample^{\otimes Dn}$.
\end{lemma}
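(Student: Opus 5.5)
The plan is to build the sections $\overline h_0,\ldots,\overline h_m$ one at a time, each in its own degree $\Ample^{\otimes D_i}$, and then raise them to suitable powers so that they all lie in a common degree $D$. The two conditions imposed at every step are the fat-point vanishing \eqref{eq:shield-fat} and the binary values \eqref{eq:shield-values}. These are interpolation conditions on the finite subscheme $Z\amalg\coprod_\nu R_\nu$ of $\barX$. This subscheme is a closed subscheme of $\barX$ by the proper-immersion argument in the proof of Lemma~\ref{lem:interpolation-quasi}, and its pieces are disjoint because $\mathcal R\cap S=\varnothing$. At each step we also impose, at finitely many auxiliary reduced closed points, the condition that the section be nonzero there. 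All these conditions can be met simultaneously for large degree by Lemma~\ref{lem:interpolation-projective}, applied with $\FF=\OO_{\barX}$.

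The key inductive invariant is a dimension count. Put $T_{-1}=\barX$ and $T_i=\bigcap_{c\le i}V_{\barX}(\overline h_c)$. We want every irreducible component of $T_i$ that is not contained in $S\cup\mathcal R$ to have dimension at most $m-1-i$. Suppose $\overline h_0,\ldots,\overline h_{i-1}$ have been chosen. In each irreducible component $Y$ of $T_{i-1}$ not contained in $S\cup\mathcal R$, we choose one closed point $x_Y$ with $x_Y\notin S\cup\mathcal R$, and we make the $x_Y$ pairwise distinct.
\begin{itemize}
\item If $\dim Y>0$, such a point exists by Lemma~\ref{lem:closedpoints}, applied with the finite set $S\cup\mathcal R$ together with the points already chosen.
\item If $\dim Y=0$, then $Y$ is a single point outside $S\cup\mathcal R$ and we take $x_Y$ to be that point.
\end{itemize}
We then take $\overline h_i$ in some $H^0(\barX,\Ample^{\otimes D_i})$ to vanish on $Z$, to take the value $\epsilon_{\nu i}$ at each $R_\nu$ (after fixing trivializations), and to be nonzero at every $x_Y$. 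Then $\overline h_i$ does not vanish identically on any such $Y$, so $Y\cap V(\overline h_i)$ has dimension less than $\dim Y$, or is empty when $\dim Y=0$. Components of $T_{i-1}$ already contained in $S\cup\mathcal R$ stay inside $S\cup\mathcal R$. This proves the invariant, and after the step $i=m$ we get $T_m\subset S\cup\mathcal R$ set-theoretically.

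It then remains to see that $T_m$ is exactly $S$. Each $R_\nu$ lies outside $T_m$, because $\epsilon_\nu\ne0$ means some $\overline h_i$ has value $1$ there. Conversely, $S\subset T_m$ because every $\overline h_i$ vanishes on $Z\supset S$. This gives \eqref{eq:shield-zero}.

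For the common degree, let $D=\operatorname{lcm}(D_i)$ and replace $\overline h_i$ by $\overline h_i^{D/D_i}$, with the induced power trivializations at the points $R_\nu$. The same observation also proves the final sentence of the statement. If $h$ vanishes on $P^{(2)}$, i.e.\ $h\in\mm_P^2$ locally, then $h^n\in\mm_P^{2n}\subset\mm_P^2$. The values $0$ and $1$ are preserved under powers. Zero sets are unchanged set-theoretically.

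The main obstacle is the bookkeeping in the inductive step. The auxiliary points must avoid $S$ and $\mathcal R$, since at the $R_\nu$ the prescribed value may be $0$. Zero-dimensional components sitting at some $R_\nu$ are deliberately left alone and are only disposed of at the end, using $\epsilon_\nu\ne0$. Over a finite field, the only non-formal input is the existence of suitable closed points, and Lemma~\ref{lem:closedpoints} supplies it.
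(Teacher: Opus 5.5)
Your proof is correct and follows essentially the same route as the paper. Both build the sections inductively by interpolation: each vanishes on $Z$, takes the binary values at $\mathcal R$, and is nonzero at auxiliary closed points off $S\cup\mathcal R$ (supplied by Lemma~\ref{lem:closedpoints}), so the dimension drops at each step; then $\epsilon_\nu\ne0$ removes $\mathcal R$, and powers give a common degree. The only difference is cosmetic: the paper tracks $(\barX\setminus S)\cap V(g_0,\ldots,g_{i-1})$ and hits only its maximal-dimensional components.

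One bookkeeping point needs care. Fix a trivialization of $\Ample$ itself at each $R_\nu$ once and use its tensor powers at every step, as the paper does. If you trivialize the various $\Ample^{\otimes D_i}$ independently, raising to the powers $D/D_i$ can induce different trivializations of $\Ample^{\otimes D}$ for different $i$. Then the prescribed values $1$ would become $i$-dependent units rather than a common binary vector.
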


\begin{proof}
Choose arbitrary trivializations of $\Ample$ at the points of $\mathcal R$; these induce trivializations of all tensor powers at those points.  The finite subschemes $Z$ and the finite reduced subscheme supported on $\mathcal R$ are finite over $k$; composed with $j$ they are proper locally closed subschemes of the projective $k$-scheme $\barX$, hence closed subschemes of $\barX$.  Every auxiliary point chosen below is a closed point of $\barX$, and every finite reduced set of such points is a finite closed subscheme of $\barX$.  We construct sections $g_i\in H^0(\barX,\Ample^{\otimes d_i})$ with the desired vanishing on $Z$, the prescribed binary values on $\mathcal R$, and whose common zero set on $\barX$ is exactly $S$.

Put
\[
        Y_{-1}=\barX\setminus S.
\]
Assume $g_0,\ldots,g_{i-1}$ have already been chosen and set
\[
        Y_{i-1}=(\barX\setminus S)\cap V_{\barX}(g_0,\ldots,g_{i-1}).
\]
Let $\delta_{i-1}=\dim Y_{i-1}$, with $\delta_{i-1}=-\infty$ if $Y_{i-1}=\varnothing$.  If $\delta_{i-1}>0$, list the irreducible components of $Y_{i-1}$ of dimension $\delta_{i-1}$ as $\Gamma_1,\ldots,\Gamma_a$ and choose closed points
\[
        y_b\in\Gamma_b\setminus \mathcal R
        \qquad(1\le b\le a),
\]
which is possible by Lemma~\ref{lem:closedpoints}.  If $\delta_{i-1}=0$, let $y_1,\ldots,y_a$ be the finitely many closed points of $Y_{i-1}$ that are not in $\mathcal R$.  If $Y_{i-1}=\varnothing$, take no auxiliary points.  The points $y_b$ are disjoint from $S\cup\mathcal R$.  Choose arbitrary trivializations of the relevant tensor powers of $\Ample$ at these auxiliary points as well.

By Lemma~\ref{lem:interpolation-projective}, for $d_i\gg0$ there exists
\[
        g_i\in H^0(\barX,\Ample^{\otimes d_i})
\]
whose restriction to the finite subscheme $Z$ is zero, whose values at the points $R_\nu$ are $\epsilon_{\nu i}$, and whose values at all auxiliary points $y_b$ are $1$.  If $\delta_{i-1}>0$, no irreducible component of $Y_{i-1}$ of maximal dimension is contained in $V_{\barX}(g_i)$; hence
\[
        \dim Y_i<\dim Y_{i-1}.
\]
If $\delta_{i-1}=0$, every point of $Y_{i-1}\setminus\mathcal R$ is removed.

Starting from $\dim Y_{-1}=m$, after choosing $g_0,\ldots,g_{m-1}$ no positive-dimensional component remains.  The last section $g_m$ removes every remaining zero-dimensional point outside $\mathcal R$.  A point $R_\nu\in\mathcal R$ cannot remain in the final common zero set because the vector $\epsilon_\nu$ is nonzero, so $g_i(R_\nu)=1$ for at least one $i$.  Therefore
\[
        \bigcap_{i=0}^{m}V_{\barX}(g_i)=S.
\]
This equality is set-theoretic on the underlying topological space of $\barX$: since $\barX$ is of finite type over a field, it is Jacobson, and any nonempty closed subset of $\barX\setminus S$ would contain a closed point of $\barX\setminus S$, all of which have been excluded by the construction above.

Let $D$ be a common multiple of $d_0,\ldots,d_m$ and define
\[
        \overline h_i=g_i^{D/d_i}\in H^0(\barX,\Ample^{\otimes D}).
\]
Since $0^e=0$ and $1^e=1$, the vanishing on $Z$, the binary values on $\mathcal R$, and the set-theoretic zero locus are unchanged.  The final assertion follows for the same reason after replacing $\overline h_i$ by $\overline h_i^n$.
\end{proof}

The next lemma supplies the binary data.  Since $X$ is smooth, each connected component is regular; because irreducible components of a regular noetherian scheme are open and closed, each connected component is integral.

\begin{lemma}[Binary values forcing componentwise independence]\label{lem:binary}
Let $X$ be a smooth quasiprojective $k$-scheme of pure dimension $m\ge1$, with connected components $C_1,\ldots,C_s$, and for each $C_\mu$ let $k_\mu$ be the algebraic closure of $k$ in $k(C_\mu)$.  Let
\[
        e_0,\ldots,e_r\in H^0(X,\EE)
\]
be sections of a line bundle $\EE$ such that the morphism
\[
        e:X\longrightarrow\PP^r_k,
        \qquad
        x\longmapsto[e_0(x):\cdots:e_r(x)]
\]
is a locally closed embedding and, for every $\mu$,
\[
        e_0|_{C_\mu},\ldots,e_r|_{C_\mu}
\]
are $k_\mu$-linearly independent.  Let $S\subset |X|$ be finite.  Then there exist a finite set
\[
        \mathcal R\subset |X|\setminus S
\]
and nonzero binary vectors
\[
        \epsilon_Q=(\epsilon_{Q0},\ldots,\epsilon_{Qm})
        \in\set{0,1}^{m+1}\setminus\{0\}
        \subset \kk(Q)^{m+1}
        \qquad(Q\in\mathcal R)
\]
with the following stronger detection property.  For every connected component $C=C_\mu$ and every nonzero matrix
\[
        \gamma=(\gamma_{ij})\in k_\mu^{(m+1)(r+1)},
\]
there is a point
\[
        Q_{C,\gamma}\in\mathcal R\cap C
\]
such that, after prescribing $h_i(Q)=\epsilon_{Qi}$ at all $Q\in\mathcal R$ with respect to trivializations of the fibres of a line bundle $\MM$, one has
\[
        \left(\sum_{i=0}^{m}\sum_{j=0}^{r}\gamma_{ij}h_i e_j\right)(Q_{C,\gamma})\ne0.
\]
In particular, for any such $h_0,\ldots,h_m$, the products
\[
        h_i e_j,
        \qquad
        0\le i\le m,
        \quad
        0\le j\le r,
\]
are $k_\mu$-linearly independent on every connected component $C_\mu$.
\end{lemma}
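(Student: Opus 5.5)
The plan exploits the fact that the relevant set of coefficient matrices is \emph{finite}: $k_\mu$ is a finite field, so $k_\mu^{(m+1)(r+1)}\setminus\{0\}$ is a finite set for each of the finitely many components $C_\mu$. We can therefore afford one dedicated test point $Q_{C,\gamma}$ for every pair $(C,\gamma)$, and choose all these points pairwise distinct. The binary vector at $Q_{C,\gamma}$ will be a standard basis vector that isolates one nonzero row of $\gamma$. Distinct points keep the prescriptions from ever conflicting.

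Concretely, fix $\mu$ and a nonzero $\gamma\in k_\mu^{(m+1)(r+1)}$, and let $i_0=i_0(\gamma)$ be the smallest index with $(\gamma_{i_0 0},\ldots,\gamma_{i_0 r})\ne0$. By Lemma~\ref{lem:component-constants}, $C_\mu$ is integral and its structure morphism factors through $\Spec k_\mu$, so $k_\mu$ acts on $H^0(C_\mu,\EE|_{C_\mu})$. The section
\[
        f_\gamma=\sum_{j=0}^{r}\gamma_{i_0 j}\,e_j|_{C_\mu}
\]
is nonzero, by the assumed $k_\mu$-linear independence of the $e_j|_{C_\mu}$. Since $C_\mu$ is integral, the nonvanishing locus $U_\gamma=C_\mu\setminus V(f_\gamma)$ is a nonempty open subscheme of dimension $m\ge1$. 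Enumerate all pairs $(\mu,\gamma)$ and choose points inductively. At each step, Lemma~\ref{lem:closedpoints} applied to $U_\gamma$ and the finite set consisting of $S$ and the previously chosen points yields a closed point $Q_{C_\mu,\gamma}\in U_\gamma$ outside that set. Set $\mathcal R=\{Q_{C,\gamma}\}$ and $\epsilon_{Q_{C,\gamma}}=e_{i_0(\gamma)}$, the standard basis vector with a $1$ in position $i_0$. This vector is nonzero and binary, and $\mathcal R\subset|X|\setminus S$ is finite.

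For the detection property, suppose the $h_i$ are any sections of $\MM$ with $h_i(Q)=\epsilon_{Qi}$ in the chosen trivializations. At $Q=Q_{C,\gamma}$ the fibre value of $\sum_{i,j}\gamma_{ij}h_ie_j$ equals
\[
        \sum_{i,j}\gamma_{ij}\epsilon_{Qi}\,e_j(Q)=f_\gamma(Q)\ne0
\]
in $\MM(Q)\otimes\EE(Q)$. Here the scalars $\gamma_{ij}\in k_\mu\subset\kk(Q)$ act on fibres through the factorization $C_\mu\to\Spec k_\mu$. The independence statement follows immediately. A $k_\mu$-linear relation $\sum\gamma_{ij}h_ie_j|_{C_\mu}=0$ with $\gamma\ne0$ would vanish at $Q_{C_\mu,\gamma}$, contradicting the display.

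The only delicate step is bookkeeping rather than geometry. One must check that the trivialization conventions make "$h_i(Q)=1$ or $0$" meaningful and compatible with the $k_\mu$-scalar action; this is harmless, since $k_\mu\to\kk(Q)$ is a field embedding. One must also check that Lemma~\ref{lem:closedpoints} applies to the open $U_\gamma$, which holds because $U_\gamma$ is positive-dimensional. The potential obstacle of needing one point to serve many $\gamma$ simultaneously is avoided entirely by the finiteness of $k_\mu$.
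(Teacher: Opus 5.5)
Your proof is correct and follows the paper's argument. You use one distinct test point per pair $(C_\mu,\gamma)$, chosen via Lemma~\ref{lem:closedpoints} in the nonvanishing locus of a nonzero row-section, with a standard basis vector as the binary datum. The only difference is the order of choices: you fix the first nonzero row $i_0$ before choosing the point, whereas the paper chooses $Q_{C,\gamma}$ off the common zero locus of all nonzero $f_i^\gamma=\sum_j\gamma_{ij}e_j|_C$ and then selects a coordinate $i$ with $f_i^\gamma(Q_{C,\gamma})\ne0$. The two orders are interchangeable.
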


\begin{proof}
By Lemma~\ref{lem:component-constants}, every connected component $C_\mu$ is integral and positive-dimensional, and the field $k_\mu$ is finite.  Fix a component $C=C_\mu$ and write $k_C=k_\mu$.  The set $k_C^{(m+1)(r+1)}$ is finite.  For every nonzero matrix
\[
        \gamma=(\gamma_{ij})\in k_C^{(m+1)(r+1)}\setminus\set{0},
\]
define actual sections on the $k_C$-scheme $C$ by
\[
        f_i^\gamma=\sum_{j=0}^{r}\gamma_{ij}e_j|_C
        \in H^0(C,\EE|_C)
        \qquad(0\le i\le m),
\]
where $H^0(C,\EE|_C)$ is viewed as a $k_C$-vector space.  Since the sections $e_j|_C$ are $k_C$-linearly independent, not all $f_i^\gamma$ are zero.  Let
\[
        V_\gamma=\bigcap_{\{i\mid f_i^\gamma\ne0\}}V_C(f_i^\gamma).
\]
This is a proper closed subset of the integral scheme $C$.  Hence
\[
        U_\gamma=C\setminus V_\gamma
\]
is a nonempty open subscheme of the positive-dimensional integral scheme $C$, and is again finite type with a positive-dimensional irreducible component.  Apply Lemma~\ref{lem:closedpoints} to $U_\gamma$ and to the finite set consisting of $S\cap |U_\gamma|$ together with all previously chosen points in $U_\gamma$.  This gives a closed point
\[
        Q_{C,\gamma}\in U_\gamma\setminus S
\]
outside all previously chosen points.  By construction,
\[
        (f_0^\gamma(Q_{C,\gamma}),\ldots,f_m^\gamma(Q_{C,\gamma}))
        \ne0.
\]
Choose a standard basis vector
\[
        \epsilon^{C,\gamma}\in\set{0,1}^{m+1}\setminus\{0\}
\]
corresponding to a nonzero coordinate of this vector, and prescribe
\[
        \epsilon_{Q_{C,\gamma},i}=\epsilon_i^{C,\gamma}.
\]
Let $\mathcal R$ be the union of all points $Q_{C,\gamma}$ over all connected components and all nonzero matrices $\gamma$.  This set is finite, because there are finitely many connected components and finitely many nonzero matrices over each finite field $k_C$.

Now let $h_0,\ldots,h_m$ be sections of a line bundle $\MM$ whose fibres at the points of $\mathcal R$ have been trivialized and whose prescribed values are the above binary vectors.  At $Q_{C,\gamma}$ the chosen trivialization of $\MM$ identifies the value in the fibre of $\MM\otimes\EE$ with
\[
        \left(\sum_{i,j}\gamma_{ij}h_i e_j\right)(Q_{C,\gamma})
        =\sum_i\epsilon_i^{C,\gamma}f_i^\gamma(Q_{C,\gamma})
        \in \EE|_{Q_{C,\gamma}}.
\]
The right-hand side is the previously chosen nonzero coordinate of the vector $(f_0^\gamma(Q_{C,\gamma}),\ldots,f_m^\gamma(Q_{C,\gamma}))$.  Thus no nontrivial $k_C$-linear combination of the products $h_i e_j$ vanishes identically on $C$.
\end{proof}
\section{The two coordinate blocks}\label{sec:blocks}

Fix, once and for all, a projective closure
\[
        j:X\hookrightarrow\barX,
\]
a line bundle $\overline{\EE}$ on $\barX$, and sections $\overline e_0,\ldots,\overline e_r$ as in Lemma~\ref{lem:initial}.  Write
\[
        \EE=j^*\overline{\EE},
        \qquad
        e_i=\overline e_i|_X .
\]
Put
\[
        B=(m+1)(r+1).
\]
For the rest of the proof of Theorem~\ref{thm:main}, fix
\[
        N\ge B+m+2,
        \qquad
        A=N+1-B.
\]
Then
\[
        A\ge m+3,
        \qquad
        N+1=A+(m+1)(r+1).
\]
Set
\[
        \Lambda=(\PP^N_k)^\vee(k).
\]
For each $\lambda\in\Lambda$, choose once and for all a representative coefficient vector
\[
        \lambda=[a:b],
\]
where
\[
        a=(a_1,\ldots,a_A)\in k^A,
        \qquad
        b=(b_{ij})_{0\le i\le m,\,0\le j\le r}
        \in k^{(m+1)(r+1)},
\]
not both zero.  Multiplying this representative by a scalar only multiplies the associated section by that scalar and does not change any hyperplane section.  With this fixed representative, let $H_\lambda$ be the hyperplane defined by
\[
        L_\lambda=
        \sum_{\ell=1}^{A}a_\ell X_\ell+
        \sum_{i=0}^{m}\sum_{j=0}^{r}b_{ij}Y_{ij}.
\]

\subsection{Assigned points and local anti-codes}

Enumerate $\Lambda=\{\lambda_1,\ldots,\lambda_M\}$.  We choose, recursively, pairwise distinct closed points
\[
        P_\lambda\in X
\]
and first-order code vectors
\[
        \xi_{\lambda1},\ldots,\xi_{\lambda A}
        \in J_\lambda:=H^0(P_\lambda^{(2)},\OO_{P_\lambda^{(2)}}).
\]
Suppose this has been done for $\lambda_1,\ldots,\lambda_{\nu-1}$, and let $E_{\nu-1}$ be the finite set of the corresponding closed point images in $\PP^{A-1}_k$.  Choose $P_{\lambda_\nu}\in X$ of sufficiently large degree, outside the previously chosen points, using Lemma~\ref{lem:closedpoints}.  Since $\kk(P_{\lambda_\nu})/k$ is finite separable and $X$ is smooth at $P_{\lambda_\nu}$, the quotient
\[
        J_{\lambda_\nu}\longrightarrow \kk(P_{\lambda_\nu})
\]
admits a $k$-algebra section by formal etaleness of finite separable extensions.  After choosing such a section, we identify
\[
        J_{\lambda_\nu}=K\oplus M,
        \qquad
        K=\kk(P_{\lambda_\nu}),
        \qquad
        M=\mm_{P_{\lambda_\nu}}/\mm_{P_{\lambda_\nu}}^2,
\]
with $M^2=0$ and $\dim_KM=m$.  Applying Lemma~\ref{lem:localcode} to the vector $a$ attached to $\lambda_\nu$, and choosing the degree of $P_{\lambda_\nu}$ sufficiently large, we obtain $\xi_{\lambda_\nu\ell}$ such that
\begin{align}
        \sum_{\ell=1}^{A}a_\ell\xi_{\lambda_\nu\ell}&=0,
        \label{eq:xi-relation}\\
        P_{\lambda_\nu}^{(2)}&\longrightarrow\PP^{A-1}_k,
        \quad z\longmapsto[\xi_{\lambda_\nu1}(z):\cdots:\xi_{\lambda_\nu A}(z)]
        \quad\text{is a closed immersion,}                     \label{eq:xi-immersion}
\end{align}
and the closed point image avoids $E_{\nu-1}$.  Thus all closed point images obtained in this way are pairwise distinct.

Put
\[
        S=\set{P_\lambda\mid\lambda\in\Lambda},
        \qquad
        Z=\coprod_{\lambda\in\Lambda}P_\lambda^{(2)}.
\]

\subsection{The shielded Segre block}

Apply Lemma~\ref{lem:binary} to the finite set $S$ and to the fixed sections $e_j$.  We obtain a finite set
\[
        \mathcal R\subset |X|\setminus S
\]
and nonzero binary values $\epsilon_{Qi}$ for $Q\in\mathcal R$.  Choose an ample line bundle $\Ample$ on the fixed projective closure $\barX$.  Apply Lemma~\ref{lem:shield} to this fixed $j:X\hookrightarrow\barX$, to $\Ample$, and to $S$, $Z$, $\mathcal R$, and these values.  We obtain an integer $D>0$ and sections
\[
        \overline h_0,\ldots,\overline h_m\in H^0(\barX,\overline{\MM}),
        \qquad
        \overline{\MM}=\Ample^{\otimes D}.
\]
Write
\[
        \MM=j^*\overline{\MM},
        \qquad
        h_i=\overline h_i|_X .
\]
Then
\[
        h_i|_Z=0,
        \qquad
        h_i(Q)=\epsilon_{Qi}\qquad(Q\in\mathcal R),
        \qquad
        \bigcap_{i=0}^{m}V_{\barX}(\overline h_i)=S.
\]
By Lemma~\ref{lem:binary}, for every connected component $C$ the sections
\[
        h_i e_j\in H^0(C,(\MM\otimes\EE)|_C)
\]
are $k_C$-linearly independent, where $k_C$ is the algebraic closure of $k$ in $k(C)$.

For $n\ge1$ set
\[
        \overline h_i^{(n)}=\overline h_i^n\in H^0(\barX,\overline{\MM}^{\otimes n}),
        \qquad
        h_i^{(n)}=h_i^n\in H^0(X,\MM^{\otimes n}),
\]
\[
        \overline{\LL}_n=\overline{\MM}^{\otimes n}\otimes\overline{\EE},
        \qquad
        \LL_n=j^*\overline{\LL}_n=\MM^{\otimes n}\otimes\EE,
\]
and
\[
        \overline w_{ij}^{(n)}=\overline h_i^{(n)}\overline e_j\in H^0(\barX,\overline{\LL}_n),
        \qquad
        w_{ij}^{(n)}=h_i^{(n)}e_j\in H^0(X,\LL_n).
\]
Since $\epsilon^n=\epsilon$ for $\epsilon\in\set{0,1}$, the same binary tests show that the sections $w_{ij}^{(n)}$ are $k_C$-linearly independent on every connected component $C$ for every $n\ge1$.  Moreover
\[
        \overline w_{ij}^{(n)}|_Z=0,
        \qquad
        \bigcap_{i,j}V_{\barX}(\overline w_{ij}^{(n)})=S,
\]
because the $\overline e_j$ have no common zero on $\barX$ and the $\overline h_i$ have common zero set $S$.

\begin{lemma}[Finite tests for componentwise nondegeneracy]\label{lem:T}
For every connected component $C$ of $X$, let $k_C$ be the algebraic closure of $k$ in $k(C)$.  There exists a finite reduced subscheme
\[
        T_C\subset C\setminus S
\]
such that $\mathcal R\cap C\subset T_C$,
\[
        \dim_{k_C} H^0(T_C,\OO_{T_C})\ge N+1,
\]
and, for every $n\ge1$, the restrictions
\[
        w_{ij}^{(n)}|_{T_C}
        \in H^0(T_C,\LL_n|_{T_C})
\]
are $k_C$-linearly independent.
\end{lemma}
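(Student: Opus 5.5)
The plan is to take $T_C$ to be $\mathcal R\cap C$ together with enough additional closed points of $C\setminus S$ to make the dimension bound hold. Linear independence will be inherited from the binary tests at $\mathcal R\cap C$, and it persists for all $n$.

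\emph{Step 1: independence on $\mathcal R\cap C$.} Lemma~\ref{lem:binary} gives more than independence on $C$. For every nonzero $\gamma\in k_C^{(m+1)(r+1)}$ there is a point $Q_{C,\gamma}\in\mathcal R\cap C$ at which $\sum\gamma_{ij}h_ie_j$ is nonzero. Replacing $h_i$ by $h_i^n$ does not change the prescribed binary values, since $\epsilon^n=\epsilon$ for $\epsilon\in\{0,1\}$ and the trivializations of $\MM^{\otimes n}$ at $\mathcal R$ are the induced ones. Hence, for every $n\ge1$,
\[
        \Bigl(\sum_{i,j}\gamma_{ij}w^{(n)}_{ij}\Bigr)(Q_{C,\gamma})
        =\sum_{i,j}\gamma_{ij}\epsilon_{Q_{C,\gamma},i}\,e_j(Q_{C,\gamma})\ne0 .
\]
It follows that no nontrivial $k_C$-combination of the $w^{(n)}_{ij}|_{T_C}$ vanishes, as long as $T_C\supset\mathcal R\cap C$ with its reduced structure.

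To make sense of $k_C$-linearity on $T_C$, note that $T_C\subset C$ and that the structure morphism of $C$ factors through $\Spec k_C$ by Lemma~\ref{lem:component-constants}. So $T_C$ is a $k_C$-scheme and restriction is $k_C$-linear. Linear independence of the restrictions is then exactly the statement that the nonzero combination $\sum\gamma_{ij}w^{(n)}_{ij}$ has nonzero restriction to $T_C$. This holds because its value at $Q_{C,\gamma}\in T_C$ is nonzero.

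\emph{Step 2: dimension.} Adding points only enlarges $T_C$ and preserves Step 1. By Lemma~\ref{lem:closedpoints}, applied to $C\setminus S$ (which is positive-dimensional, since $C$ is integral of dimension $m\ge1$) with forbidden finite set $\mathcal R\cap C$, we choose $N+1$ further distinct closed points of $C\setminus(S\cup\mathcal R)$. Then $H^0(T_C,\OO_{T_C})=\prod_{Q\in T_C}\kk(Q)$, and each factor has $k_C$-dimension at least $1$. This gives $\dim_{k_C}H^0(T_C,\OO_{T_C})\ge N+1$. Alternatively, a single point of degree at least $(N+1)[k_C:k]$ over $k$ suffices.

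A finite set of closed points with reduced structure is a finite closed subscheme of $C\setminus S$, which settles the remaining formal requirement. The only delicate point I expect is Step 1: one must check that the evaluation at $Q_{C,\gamma}$ is compatible with the $k_C$-structure, so that the scalars $\gamma_{ij}$ act on fibres through $k_C\subset\kk(Q)$. One must also check that the chosen trivializations are consistent across all $n$. Both follow directly from the constructions in Lemmas~\ref{lem:binary} and~\ref{lem:shield}.
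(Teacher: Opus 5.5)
Your proposal is correct and follows essentially the same route as the paper. Independence on $\mathcal R\cap C$ comes from the binary test points of Lemma~\ref{lem:binary} together with $\epsilon^n=\epsilon$, and the dimension bound comes from adjoining further closed points of $C\setminus(S\cup\mathcal R)$ via Lemma~\ref{lem:closedpoints}; the only difference is that you adjoin $N+1$ points unconditionally, or one point of large degree, instead of only as many as needed, which is harmless.
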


\begin{proof}
Fix a connected component $C$ and give
\[
        T_C^{(0)}:=\mathcal R\cap C
\]
the reduced finite subscheme structure.  Since $\mathcal R\subset |X|\setminus S$, we have $T_C^{(0)}\subset C\setminus S$.  For every nonzero matrix
\[
        \gamma=(\gamma_{ij})\in k_C^{(m+1)(r+1)},
\]
Lemma~\ref{lem:binary} gives a point $Q_{C,\gamma}\in\mathcal R\cap C=T_C^{(0)}$ such that
\[
        \left(\sum_{i,j}\gamma_{ij}h_i e_j\right)(Q_{C,\gamma})\ne0.
\]
At every point of $\mathcal R$ the values $h_i$ are $0$ or $1$, so $h_i(Q)^n=h_i(Q)$ for all $n\ge1$.  Hence the same point detects the corresponding linear combination of the sections $w_{ij}^{(n)}=h_i^n e_j$ for every $n\ge1$.  Therefore the restrictions
\[
        w_{ij}^{(n)}|_{T_C^{(0)}}
\]
are $k_C$-linearly independent for every $n\ge1$.

If
\[
        \dim_{k_C}H^0(T_C^{(0)},\OO_{T_C^{(0)}})\ge N+1,
\]
set $T_C=T_C^{(0)}$.  Otherwise, add closed points of $C\setminus(S\cup\mathcal R)$, each time avoiding the finite set already chosen and using Lemma~\ref{lem:closedpoints} over the finite field $k_C$, until the resulting finite reduced subscheme $T_C$ satisfies
\[
        \dim_{k_C}H^0(T_C,\OO_{T_C})
        =\sum_{P\in T_C}[\kk(P):k_C]
        \ge N+1.
\]
The inclusion $\mathcal R\cap C\subset T_C$ holds by construction, and adding points cannot destroy the already established linear independence.
\end{proof}

Let
\[
        T=\coprod_C T_C.
\]
By Lemma~\ref{lem:interpolation-projective}, after replacing $n$ by a sufficiently large integer, the restriction map
\begin{equation}\label{eq:globalinterpolation}
        H^0(\barX,\overline{\LL}_n)
        \longrightarrow
        H^0(Z,\overline{\LL}_n|_Z)\oplus H^0(T,\overline{\LL}_n|_T)
\end{equation}
is surjective.  Fix such an $n$ for the remainder of the proof and write
\[
        \overline{\LL}=\overline{\LL}_n,
        \qquad
        \LL=\LL_n,
        \qquad
        \overline w_{ij}=\overline w_{ij}^{(n)},
        \qquad
        w_{ij}=w_{ij}^{(n)}.
\]

\section{Prescribing the anti-code block}\label{sec:anticode}

We now choose global sections
\[
        \overline u_1,\ldots,\overline u_A\in H^0(\barX,\overline{\LL})
\]
and write $u_\ell=\overline u_\ell|_X$.
There are two independent requirements.  On $Z$, the first jets force the hyperplane $\lambda$ to vanish on $P_\lambda^{(2)}$.  On $T$, the values complement the shielded block to force componentwise nondegeneracy.

\subsection{Restrictions on the assigned first-order neighbourhoods}

For each $\lambda\in\Lambda$, choose a trivialization of $\overline{\LL}$ over $P_\lambda^{(2)}$ and transport the already constructed elements $\xi_{\lambda\ell}\in J_\lambda$ to elements
\[
        \eta_{\lambda\ell}
        \in H^0(P_\lambda^{(2)},\overline{\LL}|_{P_\lambda^{(2)}})
        \qquad(1\le\ell\le A).
\]
Then \eqref{eq:xi-relation} and \eqref{eq:xi-immersion} become
\begin{align}
        \sum_{\ell=1}^{A}a_\ell\eta_{\lambda\ell}&=0,                 \label{eq:anti-relation}\\
        P_\lambda^{(2)}&\longrightarrow\PP^{A-1}_k,
        \quad z\longmapsto [\eta_{\lambda1}(z):\cdots:\eta_{\lambda A}(z)]
        \quad\text{is a closed immersion.}                       \label{eq:anti-local-immersion}
\end{align}
Because the closed point images for different $\lambda$ are pairwise distinct, the scheme-theoretic images of the summands $P_\lambda^{(2)}$ have pairwise disjoint supports in $\PP^{A-1}_k$.  Lemma~\ref{lem:finite-disjoint-closed-immersion} therefore shows that the disjoint union map
\[
        Z=\coprod_{\lambda\in\Lambda}P_\lambda^{(2)}
        \longrightarrow\PP^{A-1}_k
\]
defined by the $\eta_{\lambda\ell}$ is a closed immersion.

Equivalently, for $1\le\ell\le A$ we have prescribed a section
\[
        \eta_\ell=(\eta_{\lambda\ell})_{\lambda\in\Lambda}
        \in H^0(Z,\overline{\LL}|_Z)
\]
with the property that, on the summand $P_\lambda^{(2)}$,
\begin{equation}\label{eq:anti-Z-global}
        \sum_{\ell=1}^{A}a_\ell\eta_\ell=0.
\end{equation}

\subsection{Restrictions on the component tests}

For a connected component $C$, let $k_C$ be the algebraic closure of $k$ in $k(C)$ and set
\[
        V_C=H^0(T_C,\overline{\LL}|_{T_C}),
\]
viewed as a $k_C$-vector space.  Since $T_C$ is finite and reduced and $\overline{\LL}|_{T_C}$ is invertible,
\[
        \dim_{k_C}V_C=\dim_{k_C}H^0(T_C,\OO_{T_C}).
\]
By Lemma~\ref{lem:T}, the vectors
\[
        w_{ij}|_{T_C}\in V_C
\]
are $k_C$-linearly independent; their span has dimension $B$.  Since
\[
        \dim_{k_C}V_C\ge N+1=A+B,
\]
we may choose vectors
\[
        \upsilon_{1,C},\ldots,\upsilon_{A,C}\in V_C
\]
such that
\begin{equation}\label{eq:component-basis}
        \upsilon_{1,C},\ldots,\upsilon_{A,C},
        \quad
        w_{00}|_{T_C},w_{01}|_{T_C},\ldots,w_{mr}|_{T_C}
\end{equation}
are $k_C$-linearly independent in $V_C$.  Combining the components gives
\[
        \upsilon_\ell=(\upsilon_{\ell,C})_C
        \in H^0(T,\overline{\LL}|_T)
        \qquad(1\le\ell\le A).
\]

By the surjectivity of \eqref{eq:globalinterpolation}, choose global sections
\[
        \overline u_1,\ldots,\overline u_A\in H^0(\barX,\overline{\LL})
\]
satisfying
\begin{align}
        \overline u_\ell|_Z&=\eta_\ell,                                  \label{eq:u-on-Z}\\
        \overline u_\ell|_T&=\upsilon_\ell.                              \label{eq:u-on-T}
\end{align}
Set $u_\ell=\overline u_\ell|_X$.

\section{The embedding and the hyperplane singularities}\label{sec:embedding}

Consider the ordered collection of sections
\[
        \overline u_1,\ldots,\overline u_A,
        \overline w_{00},\ldots,\overline w_{mr}
        \in H^0(\barX,\overline{\LL}).
\]

\begin{lemma}[No base points]\label{lem:basepoint}
The sections
\[
        \overline u_1,\ldots,\overline u_A,
        \overline w_{00},\ldots,\overline w_{mr}
\]
have no common zero on $\barX$.  Consequently their restrictions
\[
        u_1,\ldots,u_A,
        w_{00},\ldots,w_{mr}
\]
have no common zero on $X$.
\end{lemma}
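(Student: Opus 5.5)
The plan is to split the closed points of $\barX$ according to whether they lie in $S$. We then show that in each case one of the two blocks is nonvanishing. The non-closed case is handled by the Jacobson argument already used in Lemma~\ref{lem:shield}. The common zero locus of the collection is a closed subset of the finite type $k$-scheme $\barX$. If it were nonempty it would contain a closed point, so it suffices to show that no closed point $x\in|\barX|$ is a common zero.

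\emph{Case $x\notin S$.} Recall that $\bigcap_{i,j}V_{\barX}(\overline w_{ij})=S$ set-theoretically. This was recorded in Section~\ref{sec:blocks}, and we recall why it holds. The $\overline e_j$ define the closed immersion $\overline e$ of Lemma~\ref{lem:initial}, so they have no common zero on $\barX$. The $\overline h_i$ have common zero set exactly $S$ by \eqref{eq:shield-zero}, and taking $n$-th powers does not change zero sets. If all $\overline w_{ij}=\overline h_i^n\overline e_j$ vanished at $x$, pick $j$ with $\overline e_j(x)\ne0$. Since the fibre of $\overline{\LL}$ at $x$ is a one-dimensional $\kk(x)$-vector space, this forces $\overline h_i(x)^n=0$, and hence $\overline h_i(x)=0$, for every $i$. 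That puts $x\in S$, which is a contradiction. So some $\overline w_{ij}$ is nonzero at $x$.

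\emph{Case $x=P_\lambda\in S$.} Here all $\overline w_{ij}$ vanish on $Z$, so we must use the anti-code block. By \eqref{eq:u-on-Z}, the restrictions $\overline u_\ell|_{P_\lambda^{(2)}}$ equal $\eta_{\lambda\ell}$. By \eqref{eq:anti-local-immersion}, these define a morphism $P_\lambda^{(2)}\to\PP^{A-1}_k$, in fact a closed immersion. Being a morphism to projective space already requires the coordinates $\eta_{\lambda1},\ldots,\eta_{\lambda A}$ to generate the unit ideal of the local ring $J_\lambda$ after trivialization, so at least one of them is a unit. Concretely, in the construction of Lemma~\ref{lem:localcode}, the coordinate vector is $G$ applied to a vector with an entry equal to $1$, so some residue value $\overline\eta_{\lambda\ell}\in\kk(P_\lambda)$ is nonzero. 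Reducing modulo $\mm_{P_\lambda}$ gives $\overline u_\ell(P_\lambda)\ne0$.

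Combining the two cases, the full collection has no common zero on $\barX$. Restricting to the open subscheme $X$ gives the second assertion, since $u_\ell=\overline u_\ell|_X$ and $w_{ij}=\overline w_{ij}|_X$.

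The only delicate point is the second case: the map in \eqref{eq:anti-local-immersion} is a morphism only if not all residue values vanish. I would justify this directly from the explicit construction of Lemma~\ref{lem:localcode}, where the untransformed coordinate vector has an entry equal to $1$ and $G$ is invertible. That is more reliable than appealing abstractly to the phrase "closed immersion". Trivializations of $\overline{\LL}$ at $P_\lambda^{(2)}$ multiply all $\eta_{\lambda\ell}$ by a common unit and so do not affect nonvanishing.
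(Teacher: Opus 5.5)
Your proof is correct and is essentially the paper's argument. Outside $S$, some $\overline w_{ij}=\overline h_i^n\overline e_j$ is nonzero because the $\overline h_i$ have common zero set $S$ and the $\overline e_j$ are basepoint-free; at $P_\lambda$, some $\overline u_\ell$ is nonzero by \eqref{eq:anti-local-immersion} and \eqref{eq:u-on-Z}. Your reduction to closed points is harmless but unnecessary, since $S$ consists of closed points and the case split works at every point. Your explicit check via Lemma~\ref{lem:localcode} that some residue value $\overline\eta_{\lambda\ell}$ is nonzero spells out what the paper leaves implicit.
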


\begin{proof}
If $x\notin S$, then not all $\overline h_i(x)$ vanish and not all $\overline e_j(x)$ vanish, so some product
\[
        \overline w_{ij}(x)=\overline h_i(x)^n\overline e_j(x)
\]
is nonzero.  If $x=P_\lambda\in S$, then \eqref{eq:anti-local-immersion} implies that the anti-code values
\[
        \eta_{\lambda1},\ldots,\eta_{\lambda A}
\]
are not all zero at $P_\lambda$; by \eqref{eq:u-on-Z}, the same is true for the $\overline u_\ell$.
\end{proof}

By Lemma~\ref{lem:basepoint}, the ordered collection defines a morphism
\[
        \overline\iota_N:\barX\longrightarrow\PP^N_k.
\]
Let
\[
        \iota_N=\overline\iota_N|_X:X\longrightarrow\PP^N_k.
\]
Thus, for $x\in X$,
\begin{equation}\label{eq:iota}
        \iota_N(x)=
        [u_1(x):\cdots:u_A(x):w_{00}(x):w_{01}(x):\cdots:w_{mr}(x)].
\end{equation}

\begin{lemma}[Local immersion and point separation]\label{lem:embedding}
The morphism $\iota_N$ is a locally closed embedding.
\end{lemma}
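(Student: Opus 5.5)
The plan is to apply Lemma~\ref{lem:two-block-separation} to the projective scheme $Y=\barX$, the line bundle $\overline{\LL}$, the finite set $S$, the $u$-block $\overline u_1,\ldots,\overline u_A$ and the $w$-block $\overline w_{ij}$. This shows that $\overline\iota_N$ is a closed immersion. Since $X$ is open in $\barX$, the restriction $\iota_N$ is then a locally closed embedding. The set $S$ consists of closed points of $X$, hence lies in the smooth locus of $\barX$, and $Z$ is exactly the disjoint union of first-order neighbourhoods required by that lemma. It remains to verify hypotheses (a)--(d).

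Hypotheses (b), (c) and (d) are already in place. For (b), $\overline w_{ij}|_Z=0$ because $\overline h_i|_Z=0$. For (c), \eqref{eq:u-on-Z} identifies the restriction of the $u$-block to $Z$ with the $\eta_\ell$, and the disjoint-union map $Z\to\PP^{A-1}_k$ was shown to be a closed immersion via Lemma~\ref{lem:finite-disjoint-closed-immersion}. Hypothesis (d) is Lemma~\ref{lem:basepoint}.

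The main point is (a): the $w$-block must be basepoint-free on $\barX\setminus S$ and define a locally closed embedding there. Basepoint-freeness holds because $\bigcap_{i,j}V(\overline w_{ij})=S$. For the embedding, I would use the Segre factorization. On $U=\barX\setminus S$ the sections $\overline h_0^n,\ldots,\overline h_m^n$ have no common zero and define a morphism $h:U\to\PP^m_k$. Together with the closed immersion $\overline e:\barX\to\PP^r_k$ from Lemma~\ref{lem:initial}, we get the morphism
\[
        U\xrightarrow{(h,\overline e|_U)}\PP^m_k\times\PP^r_k\xrightarrow{\ \sigma\ }\PP^{B-1}_k,
\]
where $\sigma$ is the Segre embedding and the composite is given by the products $\overline h_i^n\overline e_j$. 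The first map is a locally closed immersion, since it is the graph-type morphism $(h,\overline e|_U)$ and its second component $\overline e|_U$ is already a locally closed immersion. Concretely, it factors as
\[
        U\to U\times\PP^m_k\to\barX\times\PP^m_k\to\PP^r_k\times\PP^m_k,
\]
namely a graph (a closed immersion, since $\PP^m_k$ is separated), then an open immersion, then the closed immersion $\overline e\times\mathrm{id}$. The composite is therefore a locally closed immersion.

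I expect the only delicate step to be (a): identifying the pullback line bundle and confirming that the Segre coordinates correspond exactly to the ordered $\overline w_{ij}$. I would first check that $\sigma^*\OO(1)=\OO(1,1)$ pulls back to $\overline{\MM}^{\otimes n}\otimes\overline{\EE}=\overline{\LL}$, compatibly with the chosen sections. Once (a) holds, Lemma~\ref{lem:two-block-separation} yields the closed immersion $\overline\iota_N:\barX\hookrightarrow\PP^N_k$, and restricting to the open subscheme $X$ proves the lemma.
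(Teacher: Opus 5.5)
Your proposal is correct and is essentially the paper's proof. Both apply Lemma~\ref{lem:two-block-separation} to $Y=\barX$; get (a) from the Segre factorization $\barX\setminus S\to\PP^m_k\times\PP^r_k\to\PP^{B-1}_k$, whose second component is the immersion $\overline e$; take (b), (c), (d) from $\overline h_i|_Z=0$, from \eqref{eq:u-on-Z} with Lemma~\ref{lem:finite-disjoint-closed-immersion}, and from Lemma~\ref{lem:basepoint}; and restrict the closed immersion $\overline\iota_N$ to the open subscheme $X$. Your graph factorization only spells out the paper's one-line claim that a morphism with an immersive component is an immersion.
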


\begin{proof}
On $\barX\setminus S$, the $\overline w$-block has no common zero and is the Segre product of
\[
        \overline h^{(n)}:\barX\setminus S\longrightarrow\PP^m_k,
        \qquad
        x\longmapsto[\overline h_0(x)^n:\cdots:\overline h_m(x)^n]
\]
and the fixed closed immersion
\[
        \overline e:\barX\longrightarrow\PP^r_k.
\]
Thus
\[
        \barX\setminus S\longrightarrow\PP^{B-1}_k,
        \qquad
        x\longmapsto[\overline w_{00}(x):\cdots:\overline w_{mr}(x)]
\]
is the composition
\[
        \barX\setminus S\xrightarrow{(\overline h^{(n)},\overline e)}\PP^m_k\times\PP^r_k
        \xrightarrow{\operatorname{Segre}}\PP^{B-1}_k.
\]
The second projection of $(\overline h^{(n)},\overline e)$ is the locally closed embedding $\overline e|_{\barX\setminus S}$, so $(\overline h^{(n)},\overline e)$ is a locally closed embedding; the Segre map is a closed embedding.  Hence the $\overline w$-block defines a locally closed embedding of $\barX\setminus S$.

On $Z$, all $\overline w_{ij}$ vanish and the $\overline u$-block defines a closed immersion $Z\hookrightarrow\PP^{A-1}_k$ by construction.  Since $S\subset X$ and $X$ is a smooth open subscheme of $\barX$, the set $S$ is contained in the smooth locus of $\barX$.  Lemma~\ref{lem:basepoint} says that the full collection of sections has no common zero on $\barX$.  Therefore the hypotheses of Lemma~\ref{lem:two-block-separation} are satisfied for $Y=\barX$, with the $B$ sections $w_1,\ldots,w_B$ enumerated as the $\overline w_{ij}$.  Hence
\[
        \overline\iota_N:\barX\hookrightarrow\PP^N_k
\]
is a closed immersion.  Since $X\subset\barX$ is open, the restriction $\iota_N=\overline\iota_N|_X$ is a locally closed embedding.
\end{proof}

\begin{lemma}[Componentwise nondegeneracy]\label{lem:nondegenerate}
For every connected component $C$ of $X$, every field extension $K/k$, and every connected component $D$ of $C_K$, the image of $D$ under $(\iota_N)_K$ is not contained in any hyperplane of $\PP^N_K$.
\end{lemma}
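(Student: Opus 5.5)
The plan is to reduce to Lemma~\ref{lem:component-base-change}.  Fix a connected component $C$ of $X$ with constant field $k_C$.  By Lemma~\ref{lem:basepoint}, the $N+1$ sections
\[
        u_1|_C,\ldots,u_A|_C,\ w_{00}|_C,\ldots,w_{mr}|_C\in H^0(C,\LL|_C)
\]
are basepoint-free on $C$, and they define $\iota_N|_C$.  So Lemma~\ref{lem:component-base-change} gives the conclusion for every $K/k$ and every connected component $D$ of $C_K$, provided these $N+1$ restricted sections are $k_C$-linearly independent.  Establishing that independence is the real content; the rest is bookkeeping.

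To prove $k_C$-linear independence, I will restrict to the finite reduced test scheme $T_C\subset C\setminus S$ from Lemma~\ref{lem:T}.  Suppose we have a relation
\[
        \sum_\ell c_\ell u_\ell|_C+\sum_{i,j}\gamma_{ij}w_{ij}|_C=0,
        \qquad c_\ell,\gamma_{ij}\in k_C .
\]
Restricting to $T_C$ is a $k_C$-linear operation, since $T_C$ is a closed subscheme of the $k_C$-scheme $C$ and restriction commutes with the $k_C$-structure.  It yields
\[
        \sum_\ell c_\ell\,\upsilon_{\ell,C}+\sum_{i,j}\gamma_{ij}\,w_{ij}|_{T_C}=0,
\]
using $\overline u_\ell|_T=\upsilon_\ell$ from \eqref{eq:u-on-T}.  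Here $u_\ell|_{T_C}=\overline u_\ell|_{T_C}$ because $T_C\subset X$.  By the choice in \eqref{eq:component-basis}, the vectors $\upsilon_{\ell,C}$ and $w_{ij}|_{T_C}$ are $k_C$-linearly independent in $V_C$.  Hence all $c_\ell$ and $\gamma_{ij}$ vanish.

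The main point to check carefully is the compatibility of scalars.  The interpolation in \eqref{eq:globalinterpolation} is only $k$-linear, while the independence in \eqref{eq:component-basis} is over $k_C$.  The statement still holds because $k_C$ acts on $H^0(C,\LL|_C)$ and on $V_C=H^0(T_C,\overline{\LL}|_{T_C})$ through the structure morphism $C\to\Spec k_C$ of Lemma~\ref{lem:component-constants}, and restriction is $\OO_C$-linear and hence $k_C$-linear.  A relation with $k_C$-coefficients on $C$ therefore restricts to the same relation on $T_C$.

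Applying Lemma~\ref{lem:component-base-change} to each $C$ then proves the lemma.  Since $\LL|_{T_C}$ is trivial on the reduced finite scheme, no trivialization issues arise.
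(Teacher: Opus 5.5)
Your proposal is correct and follows the paper's proof essentially step for step. You restrict a putative $k_C$-linear relation among the coordinate sections to $T_C$, use \eqref{eq:u-on-T} and the independence chosen in \eqref{eq:component-basis} to kill all coefficients, and then apply Lemma~\ref{lem:basepoint} and Lemma~\ref{lem:component-base-change}; your remark that restriction is $k_C$-linear makes explicit a point the paper leaves implicit.
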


\begin{proof}
Let $C$ be a connected component and let $k_C$ be the algebraic closure of $k$ in $k(C)$.  Suppose that a $k_C$-linear combination
\[
        s=\sum_{\ell=1}^{A}\alpha_\ell u_\ell+
        \sum_{i=0}^{m}\sum_{j=0}^{r}\beta_{ij}w_{ij}
        \in H^0(C,\LL|_C)
\]
vanishes identically on $C$.  Restricting to $T_C$ gives
\[
        \sum_{\ell=1}^{A}\alpha_\ell \upsilon_{\ell,C}
        +\sum_{i,j}\beta_{ij}w_{ij}|_{T_C}=0
        \quad\text{in}\quad H^0(T_C,\LL|_{T_C}),
\]
where the target is viewed as a $k_C$-vector space.  By \eqref{eq:u-on-T}, $u_\ell|_{T_C}=\upsilon_{\ell,C}$ for every $\ell$, and the construction preceding \eqref{eq:component-basis} chooses the vectors so that
\[
        \upsilon_{1,C},\ldots,\upsilon_{A,C},
        \quad
        w_{00}|_{T_C},w_{01}|_{T_C},\ldots,w_{mr}|_{T_C}
\]
are $k_C$-linearly independent in $H^0(T_C,\LL|_{T_C})$.  This is exactly \eqref{eq:component-basis}.  Hence every coefficient $\alpha_\ell$ and $\beta_{ij}$ is zero.  The coordinate sections are therefore $k_C$-linearly independent on $C$.

By Lemma~\ref{lem:basepoint}, these $N+1$ coordinate sections are basepoint-free on $C$.  Lemma~\ref{lem:component-base-change}, applied to them, gives the asserted linear nondegeneracy on every connected component after every scalar extension $K/k$.
\end{proof}

We now prove that every rational hyperplane section is singular.  Let
\[
        \lambda=[a:b]\in\Lambda=(\PP^N_k)^\vee(k).
\]
The pullback of $H_\lambda$ is the section
\[
        s_\lambda=
        \sum_{\ell=1}^{A}a_\ell u_\ell+
        \sum_{i=0}^{m}\sum_{j=0}^{r}b_{ij}w_{ij}
        \in H^0(X,\LL).
\]
At the assigned point $P_\lambda$, every $w_{ij}$ vanishes on $P_\lambda^{(2)}$ because every $h_i$ vanishes on $Z$.  By \eqref{eq:anti-Z-global} and \eqref{eq:u-on-Z},
\[
        \left.\sum_{\ell=1}^{A}a_\ell u_\ell\right|_{P_\lambda^{(2)}}=0.
\]
Therefore
\begin{equation}\label{eq:singular-jet}
        s_\lambda|_{P_\lambda^{(2)}}=0.
\end{equation}
Let $C_\lambda$ be the connected component of $X$ containing $P_\lambda$.  By Lemma~\ref{lem:nondegenerate}, the image of $C_\lambda$ is not contained in the rational hyperplane $H_\lambda$, so $s_\lambda|_{C_\lambda}$ is not identically zero.  Since $C_\lambda$ is integral, the germ of $s_\lambda$ at $P_\lambda$ is nonzero: if the germ were zero, then $s_\lambda$ would vanish on a nonempty open neighbourhood of $P_\lambda$, hence at the generic point of $C_\lambda$, and therefore identically on $C_\lambda$.

Since $\iota_N$ is a locally closed embedding, the scheme-theoretic intersection $H_\lambda\cap\iota_N(X)$ is identified near $P_\lambda$ with the zero scheme of $s_\lambda$ on $X$.  Trivialize $\LL$ near $P_\lambda$ and let
\[
        0\ne f_\lambda\in R:=\OO_{X,P_\lambda}
\]
represent $s_\lambda$.  Equation \eqref{eq:singular-jet} says
\[
        f_\lambda\in\mm_{P_\lambda}^2.
\]
The local ring of the hyperplane section at $P_\lambda$ is
\[
        R/(f_\lambda).
\]
The ring $R$ is a regular local domain of dimension $m$, because $X$ is smooth of pure dimension $m$ over $k$.  Lemma~\ref{lem:first-order-singular} applies to the nonzero element $f_\lambda\in\mm_{P_\lambda}^2$ and shows that $R/(f_\lambda)$ is not regular.  Since every smooth scheme over a field is regular, the hyperplane section is not smooth at the image point $\iota_N(P_\lambda)$; indeed it is singular there.

This proves Theorem~\ref{thm:main} with
\[
        N_X=B+m+2=(m+1)(r+1)+m+2.
\]

\section{Consequences and comparison}\label{sec:comparison}

\begin{corollary}[Negative answer to Baker's question for a prescribed variety]\label{cor:poonen}
Let $X$ be nonempty smooth quasiprojective of pure positive dimension over $\F_q$.  There is no integer $n_0$ such that every embedding
\[
        \iota:X\hookrightarrow\PP^n_k,
        \qquad n\ge n_0,
\]
satisfying Poonen's componentwise nondegeneracy hypothesis admits a $k$-rational hyperplane $H\subset\PP^n_k$ for which $H\cap\iota(X)$ is smooth of dimension $\dim X-1$.
\end{corollary}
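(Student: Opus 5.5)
The corollary follows by contradiction from Theorem~\ref{thm:main}. Suppose some integer $n_0$ had the stated property. Set $n=\max(n_0,N_X)$, where $N_X$ is the integer supplied by Theorem~\ref{thm:main} for the given $X$. The theorem applies because $X$ is nonempty, smooth, quasiprojective and of pure dimension $m\ge1$. It produces a locally closed embedding $\iota_n:X\hookrightarrow\PP^n_k$, and we compare it with the hypothesis of Baker's question.

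\emph{Step 1: $\iota_n$ is an admissible embedding.} Poonen's hypothesis requires that no connected component of $X$ be mapped into a hyperplane. Part~(i) of Theorem~\ref{thm:main}, applied with $K=k$ and $D=C$, says exactly this for $k$-hyperplanes. It gives more, namely the same statement after arbitrary scalar extension. So whichever reading of the nondegeneracy condition is intended, whether rational hyperplanes only or geometric ones, $\iota_n$ satisfies it. This is why the corollary is phrased with the componentwise hypothesis.

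\emph{Step 2: no rational hyperplane works.} Let $H\subset\PP^n_k$ be any $k$-rational hyperplane. Part~(ii) of Theorem~\ref{thm:main} says $H\cap\iota_n(X)$ is singular at the closed point $\iota_n(P_H)$. In particular, this intersection is not smooth of dimension $m-1$. Hence $\iota_n$ is an embedding with $n\ge n_0$ that satisfies the hypothesis but admits no good rational hyperplane. This contradicts the assumed property of $n_0$.

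The only point needing care, which is mild, is the gap between ``not smooth of dimension $m-1$'' and ``singular at a point.'' The theorem already records this: a scheme smooth of dimension $m-1$ is regular at each point, while Lemma~\ref{lem:first-order-singular} shows the local ring at $\iota_n(P_H)$ is not regular. We also use that, since $\iota_n$ is a locally closed embedding, the local ring of the scheme-theoretic intersection at $\iota_n(P_H)$ equals $\OO_{X,P_H}/(f_H)$, and this was verified in Section~\ref{sec:embedding}. Apart from this, the argument is just quantifier bookkeeping, using that Theorem~\ref{thm:main} provides bad embeddings in every ambient dimension $N\ge N_X$, not merely along a sequence.
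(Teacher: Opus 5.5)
Your proposal is correct and follows the paper's proof. You take $N\ge\max(n_0,N_X)$, use part (i) of Theorem~\ref{thm:main} (with $K=k$) to get the componentwise nondegeneracy hypothesis, and use part (ii) to see that every $k$-rational hyperplane section is singular, hence not smooth of dimension $m-1$. One small remark: the refutation only needs bad embeddings in unboundedly many ambient dimensions, so having every $N\ge N_X$ (rather than just an unbounded sequence) is convenient but not actually required.
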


\begin{proof}
For every $N\ge N_X$, Theorem~\ref{thm:main} gives an embedding $\iota_N$ satisfying the nondegeneracy hypothesis, even after arbitrary scalar extension, but for which every $k$-rational hyperplane section is singular at its assigned point.
\end{proof}

\begin{remark}[Scope of the conclusion]
Theorem~\ref{thm:main} should be read as a construction of counterexample embeddings for the prescribed variety $X$.  It does not classify all high-dimensional embeddings of $X$.  The point is that, after $X$ is fixed, the construction supplies embeddings satisfying Poonen's componentwise nondegeneracy hypothesis but having no smooth $k$-rational hyperplane section.  In fact this can be done for every $N\ge N_X$.

Every rational hyperplane is assigned a specific singular point at which the pulled-back equation has zero first jet:
\[
        \forall\lambda\in(\PP^N_k)^\vee(k)
        \quad
        \exists P_\lambda\in X
        \quad
        s_\lambda\in\mm_{P_\lambda}^2\LL_{P_\lambda}.
\]
The construction does not assert that the embeddings are given by complete linear series.  This is consistent with Erman--Wood's complete-linear-series construction for special projective examples and with their observation that a construction for a prescribed $X$ should not generally use complete linear series \cite[Remark 9.4]{ErmanWood2015}.
\end{remark}

\begin{remark}[Role of the two blocks]
The finite-field obstruction is not merely that the dual projective space has finitely many $k$-points.  For each
\[
        \lambda=[a:b]\in(\PP^N_k)^\vee(k)
\]
one must impose the zero-first-jet condition
\[
        \sum_\ell a_\ell u_\ell+
        \sum_{i,j}b_{ij}h_i^n e_j
        \equiv0\pmod{\mm_{P_\lambda}^2}
\]
while keeping the same coordinate system basepoint-free, immersive at $S$, immersive away from $S$, point-separating, and nondegenerate on each component.  The shielded block makes all $b$-terms vanish modulo $\mm_{P_\lambda}^2$ at the assigned points, while the anti-code block absorbs the one remaining $a$-relation without losing the first-order embedding of $P_\lambda^{(2)}$.  The auxiliary finite sets $\mathcal R$ and $T_C$ prevent these local singularity requirements from collapsing the global image into a rational hyperplane.
\end{remark}

\begin{acknowledgements}
The author used GPT-based large language models to generate and compare possible proof strategies, suggest auxiliary lemmas, identify potential gaps, and assist with preliminary wording. All AI-generated suggestions were manually reviewed, selected, modified, and independently verified by the author. No unverified AI-generated proof or mathematical claim was incorporated into the final manuscript.

The author also used the AI agents Rethlas (available at
\url{https://github.com/frenzymath/Rethlas}) as auxiliary proof-checking tools to inspect parts of the argument for possible logical gaps, missing assumptions, notation inconsistencies, and unclear inferential steps. Their outputs were used only as advisory feedback.The authors used the CSSC framework, available at
\url{https://github.com/anetigone/cssc}, together with GPT-based assistance, to
develop Lean 4 formalizations of selected proof components in this paper. The
resulting Lean 4 development is available at
\url{https://github.com/anetigone/lean-research-formalizations}. The
formalization is intended as a machine-checkable supplement to the mathematical
arguments presented here, and should be understood within the scope described in the accompanying repository.

The author retains full responsibility for the final content, including all definitions, statements, proofs, citations, and conclusions. The AI systems and proof-checking agents were not treated as authors and bear no responsibility for the manuscript.

\textit{Competing interests.} The author declares no competing interests.

\textit{Data access statement.} No datasets were generated or analysed during the current study.
\end{acknowledgements}

\affiliationone{%
   Yutong Zhang\\
   Yaoran Yang\\
   School of Mathematics, Sichuan University\\
   Chengdu, 610065\\
   China
   \email{yutongzhang@stu.scu.edu.cn}
   \email{yangyaoran@stu.scu.edu.cn}}

\end{document}